\documentclass[11pt]{amsart} 
\usepackage{amsmath, amsfonts}
\usepackage{amsthm}
\usepackage{amssymb}
\usepackage[centering, vcentering, marginratio=1:1, vscale=0.8, hscale=0.7, letterpaper]{geometry}
\usepackage{url} 
\usepackage[colorlinks,  citecolor=black, linkcolor = black, urlcolor = black]{hyperref} 
\usepackage{mathtools}
\usepackage{tikz-cd}
\usepackage{stmaryrd}
\usepackage{mathrsfs}
\usepackage{derivative}
\usepackage{enumerate}

\theoremstyle{plain}
\newtheorem{thm}{Theorem}[section]
\newtheorem{prop}[thm]{Proposition}
\newtheorem{cor}[thm]{Corollary}
\newtheorem{lem}[thm]{Lemma}
\newtheorem{question}[thm]{Question}

\theoremstyle{definition}
\newtheorem{defn}[thm]{Definition}

\newtheorem{defn-prop}[thm]{Definition-Proposition}

\theoremstyle{remark}
\newtheorem{remark}[thm]{Remark}

\DeclareMathOperator{\Hom}{Hom}

\DeclareMathOperator{\Spec}{Spec}

\DeclareMathOperator{\Proj}{Proj}


\DeclareMathOperator{\an}{an}
\DeclareMathOperator{\ma}{MA}

\DeclareMathOperator{\vol}{vol}
\DeclareMathOperator{\na}{NA}
\DeclareMathOperator{\trop}{trop}

\DeclareMathOperator{\Val}{Val}
\DeclareMathOperator{\ord}{ord}

\DeclareMathOperator{\qm}{qm}

\DeclareMathOperator{\fut}{Fut}

\DeclareMathOperator{\fs}{\mathrm{FS}}
\DeclareMathOperator{\cpsh}{\mathrm{CPSH}}

\DeclareMathOperator{\supp}{\mathrm{supp}}
\DeclareMathOperator{\Coeff}{\mathrm{Coeff}}
\newcommand{\tc}{\mathrm{tc}}

\renewcommand{\AA}{\mathbb{A}}

\newcommand{\CC}{\mathbb{C}}

\newcommand{\GG}{\mathbb{G}}

\newcommand{\NN}{\mathbb{N}}

\newcommand{\PP}{\mathbb{P}}
\newcommand{\QQ}{\mathbb{Q}}
\newcommand{\RR}{\mathbb{R}}

\newcommand{\TT}{\mathbb{T}}

\newcommand{\ZZ}{\mathbb{Z}}

\newcommand{\kk}{\Bbbk}


\newcommand{\cB}{\mathcal{B}}

\newcommand{\cF}{\mathcal{F}}

\newcommand{\cH}{\mathcal{H}}
\newcommand{\cI}{\mathcal{I}}

\newcommand{\cL}{\mathcal{L}}
\newcommand{\cM}{\mathcal{M}}

\newcommand{\cO}{\mathcal{O}}

\newcommand{\cR}{\mathcal{R}}

\newcommand{\cV}{\mathcal{V}}

\newcommand{\cX}{\mathcal{X}}

\newcommand{\fm}{\mathfrak{m}}

\newcommand{\ft}{\mathfrak{t}}

\newcommand{\la}{\langle}
\newcommand{\ra}{\rangle}

\newcommand{\bP}{\mathbb{P}}
\newcommand{\bC}{\mathbb{C}}
\newcommand{\bR}{\mathbb{R}}
\newcommand{\bA}{\mathbb{A}}
\newcommand{\bQ}{\mathbb{Q}}
\newcommand{\bZ}{\mathbb{Z}}
\newcommand{\bT}{\mathbb{T}}
\newcommand{\bN}{\mathbb{N}}

\newcommand{\bG}{\mathbb{G}}

\newcommand{\oX}{\overline{X}}
\newcommand{\oB}{\overline{B}}
\newcommand{\oGamma}{\overline{\Gamma}}

\raggedbottom
\allowdisplaybreaks

\title{K-semistability of Log Fano Cone Singularities}
\author{Yuchen Liu and Yueqiao Wu}
\address{Department of Mathematics, Northwestern University, Evanston, IL 60208, USA}
\email{yuchenl@northwestern.edu}

\address{Department of Mathematics, Johns Hopkins University, Baltimore, MD 21218, USA}
\email{ywu347@jhu.edu}

\begin{document}
\begin{abstract}
    We give a non-Archimedean characterization of K-semistability of log Fano cone singularities, and show that it agrees with the definition originally defined by Collins--Sz\'ekelyhidi. As an application, we show that to test K-semistability, it suffices to test special test configurations. We also show that special test configurations give rise to lc places of torus equivariant bounded complements.
\end{abstract}

\maketitle
\tableofcontents
\addtocontents{toc}{\protect\setcounter{tocdepth}{1}}

\section{Introduction}
K-stability of log Fano pairs has been extensively studied in the past decade. From the differential geometry point of view, the Yau--Tian--Donaldson conjecture for log Fano pairs is now a theorem stating that a log Fano pair admits a K\"ahler-Einstein metric if and only if it is K-polystable, thanks to the work of~\cite{BBJ, LTW, Li21, LXZ}. Algebro-geometrically, K-stability also turns out to be the right notion for forming moduli space of Fano varieties~\cite{Jia20, CP21, LWX, BX, ABHLX, Xu, BLX,  XZCM, XuZhuang20, BHLLX, LXZ}.

A local analogue of the above is K-stability of \emph{log Fano cone singularities}, also commonly known as \emph{local K-stability}. While we will recall the relevant definitions later, this was first studied in~\cite{MSY} and roughly speaking these singularities are generalizations of affine cones over log Fano varieties by allowing irrational polarizations. In a similar manner, the motivation for studying local K-stability is twofold. First, by extending the global K-stability theory to the local setting, Collins--Sz\'ekelyhidi~\cite{CS19} and C. Li~\cite{Li21} showed a YTD type result for log Fano cone singularities, stating that a normalized log Fano cone singularity admits a weak Ricci-flat K\"ahler cone metric if and only if it is K-polystable with respect to $\QQ$-Gorenstein test configurations. 
Second, from an algebro-geometric perspective, the recently resolved stable degeneration conjecture (see~\cite{XZ22, LWX}) describes a 2-step degeneration theory for degenerating klt singularities to K-semi/polystable log Fano cone singularities. This also in turn motivates the study of moduli theory of klt singularties, and there has been recent works toward this direction, see e.g. \cite{XZbdd, OdakaModuli, Che24}. 

While there are a lot of activities surrounding the study of local K-stability in recent years, most of the algebraic theory relies on testing local K-stability on refined classes of so-called \emph{special test configurations} and \emph{$\QQ$-Gorenstein test configurations}, as opposed to the more general class of \emph{normal test configurations} in the original definition, see~\cite{CSIrregular}. Thus, it is natural to ask if it is enough to test local K-stability using just special test configurations. Indeed, this is the case for the global theory, as first shown in~\cite{LX14} using the Minimal Model Program (MMP). In this paper, we give an affirmative anwer to the above question for local K-semistability, and our approach relies on a non-Archimedean (NA) characterization of local K-stability.

Let $(X, B; \xi)$ be a log Fano cone singularity, i.e., $(X, B)$ is a $\QQ$-Gorenstein klt pair with a good torus action fixing a cone point on $X$, and $\xi$ is a Reeb vector. A key feature of such objects is that $X$ can be thought of as an affine cone over a log Fano pair with respect to some multiple of the anti-canonical polarization whenever $\xi$ is a rational vector. In particular, there is a one-to-one correspondence between test configurations of $(X, B; \xi)$ and that of the quotient log Fano pair when $\xi$ is rational. The idea of~\cite{CSIrregular} is to define a Futaki invariant for normal test configurations so that it varies continuously with respect to $\xi$, and recovers the Futaki invariant for the corresponding log Fano pair when $\xi$ is rational. Following the same strategy, our guiding principle is to define non-Archimedean functionals depending continuously on the Reeb fields, thereby extending the global versions continuously to irrational polarizations. 
More precisely, to each normal test configuration $(\cX, \cB; \xi, \eta)$, as studied in~\cite{wu}, there is a naturally associated FS function $\varphi=\fs(\xi, \eta): X^{\an}\setminus\{o\}\to \RR$ on the Berkvovich analytification of $X$, and one can define the non-Archimedean Monge--Amp\`ere operator on the space of FS functions $\cH^{\na}(\xi)$. The operator defines probability measures of the form $\sum_v c_v\delta_v$, where $c_v>0$ should be thought of as a local intersection number, and $\delta_v$ is the dirac mass supported at some quasi-monomial valuation. Using mixed Monge--Amp\`ere measures as a main source of input, we can define non-Archimedean local Mabuchi $M^{\na}$ and Ding $D^{\na}$ functionals in a similar way as its global counterparts defined in~\cite{BHJ17}. We refer the readers to Section~\ref{section: NAfunctional} for precise definitions of these functionals. Our first result is the following continuity property.
\begin{thm}\label{thm-main-cts}
    For a fixed normal test configuration $(\cX, \cB; \xi, \eta)$ of a log Fano cone singularity $(X, B; \xi)$, the function 
    \[\xi\mapsto F^{\na}(\fs(\xi, \eta))\]
    is continuous on the Reeb cone for any $F\in \{M, D\}$. Moreover, when $\xi$ is rational, they recover the corresponding global NA functionals.
\end{thm}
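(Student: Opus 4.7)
The plan is to decompose $M^{\na}$ and $D^{\na}$ into their building blocks and verify continuity and rational compatibility of each block separately. By construction each functional is a combination of (i) mixed Monge--Amp\`ere intersection numbers of FS functions, (ii) log discrepancies $A_{(X,B)}(v)$ of the quasi-monomial valuations supporting the MA measures, (iii) a Ding/log-canonical slope term, and (iv) a normalization involving the Reeb weight and the volume of $\xi$.

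For (i), the first step is to fix a torus-equivariant log smooth model $\pi:\cY\to\cX$ dominating the test configuration on which $\fs(\xi,\eta)$ is representable by a $\pi$-$\RR$-ample divisor whose coefficients depend \emph{linearly} on $\xi$ and affinely on $\eta$. The mixed MA measure of such a family of FS functions is then a non-negative combination of Dirac masses at the divisorial valuations attached to the components of the reduced special fiber of $\cY$, with coefficients given by local self- and mixed-intersection numbers computed on $\cY$. These numbers are polynomial in the coordinates of $\xi$, so continuity of the MA weights (and hence of the energy term $E^{\na}(\fs(\xi,\eta))$ and its mean value) follows automatically on the Reeb cone.

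For (ii)--(iv): once the MA weights are continuous in $\xi$, the entropy $\int A_{(X,B)}\,d\ma(\fs(\xi,\eta))$ is continuous because the log discrepancies $A_{(X,B)}(v)$ are $\xi$-independent. The Ding slope $L^{\na}(\fs(\xi,\eta))=\inf_v(A_{(X,B)}(v)+\fs(\xi,\eta)(v))$ reduces to an infimum over the same finite set of quasi-monomial valuations coming from $\cY$; since $\fs(\xi,\eta)(v)$ is linear in $\xi$ for each fixed $v$, this infimum is a concave piecewise linear, hence continuous, function of $\xi$. The Reeb normalization terms are manifestly polynomial in $\xi$. For the compatibility with the global NA functionals when $\xi$ is rational, one identifies $X$ (after an \'etale cyclic cover if necessary) with an affine orbifold cone over a log Fano pair $(V,B_V)$ polarized by a multiple of $-K_{(V,B_V)}$. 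Under this identification $(\cX,\cB;\xi,\eta)$ descends to an ordinary test configuration of $(V,B_V)$, the FS function $\fs(\xi,\eta)$ corresponds to the test configuration potential of \cite{BHJ17}, and a direct intersection-theoretic comparison on a common equivariant resolution matches the mixed MA measures, log discrepancies, and slope data, giving the desired equality of $M^{\na}$ and $D^{\na}$ with their global counterparts.

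The main obstacle is the bookkeeping in step (i): one must guarantee that a single equivariant model $\cY$ suffices to represent $\fs(\xi,\eta)$ simultaneously for all $\xi$ in a neighborhood of a given Reeb field with coefficients that vary linearly in $\xi$, and that the infimum in $L^{\na}$ is realized on valuations coming from $\cY$ uniformly in $\xi$. This is the local analogue of the ``sufficiently fine model'' arguments in \cite{CSIrregular} together with the model-theoretic input from \cite{wu}.
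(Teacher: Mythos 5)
Your overall architecture (split $M^{\na}$ and $D^{\na}$ into energy, entropy, and slope terms; prove continuity of each block in $\xi$; check rational compatibility via the quotient log Fano pair) matches the paper's, and your treatment of the rational case is essentially Propositions~\ref{chap5-prop-local-global-mabuchi} and~\ref{chap5-prop-local-global-ding}. However, step (i) contains a claim that fails for irrational $\xi$ and that the rest of your argument leans on: the mixed Monge--Amp\`ere measure of $\fs(\xi,\eta)$ is \emph{not} a combination of Dirac masses at the fixed divisorial valuations attached to the components of the special fiber of a model $\cY$. For irrational $\xi$ it is supported at quasi-monomial valuations whose positions \emph{move with} $\xi$ --- in the tropical picture they are the solutions of $\xi$-dependent linear systems inside the faces of $\mathrm{QM}(Y,D)$ (this is precisely Step~1 of Lemma~\ref{Aconvergence}, and it is why the paper stresses that no intersection-theoretic formula for the local Futaki invariant exists). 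Once the support moves, your step (ii) breaks: continuity of the weights alone does not give continuity of the entropy, because $A_{(X,B)}$ is not continuous on $X^{\an}$. The paper's fix is Lemma~\ref{Aconvergence}: after pulling back to a single $\TT$-equivariant log smooth model $(Y,D)$ resolving the ideal generated by the functions defining the norm, one shows that the supports of all the measures lie in a fixed \emph{compact} subset of $\mathrm{QM}(Y,D)$, on which $A_{(X,B)}$ restricts to a continuous function by \cite{JM12}; combined with weak convergence of the measures and the translation identities $\mu_t=(t\xi)_*\mu_0$ and $A(\xi*v)=A(\xi)+A(v)$, this yields continuity of $H^{\na}$.

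The issue you flag as the ``main obstacle'' --- realizing the infimum in $L^{\na}$ on a finite set of valuations from $\cY$ uniformly in $\xi$ --- is real, but the paper sidesteps it entirely. Continuity of $L^{\na}$ (Proposition~\ref{chap5-prop-L-cts}) is proved softly: upper semicontinuity is automatic since $L^{\na}$ is an infimum of functions continuous in $\xi$, and the lower bound follows from the uniform estimate $|\fs(\xi_j,\chi)-\fs(\xi,\chi)|<\varepsilon/n$ on $X_0$ applied to a minimizing sequence for $L^{\na}(\fs(\xi_j,\chi))$. (A finite-set formula for $L^{\na}$ does hold when $\eta$ lies in the Reeb cone or the test configuration is $\QQ$-Gorenstein, but this is established later in the paper and is not needed for Theorem~\ref{thm-main-cts}.) Note also that $\fs(\xi,\eta)(v)=\max_j\bigl(\log|f_j|(v)+\chi(f_j)\bigr)/\langle\xi,\alpha_j\rangle$ is a maximum of ratios, hence continuous but not linear in $\xi$, so the piecewise-linear concavity you invoke for the slope term is not available as stated.
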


The above theorem is a synopsis of the following more precise statements: Proposition~\ref{chap5-prop-local-global-mabuchi}, Theorem~\ref{chap5-thm-mabuchi-cts}, Proposition~\ref{chap5-prop-local-global-ding} and Corollary~\ref{chap5-cor-ding-cts}. This can be viewed as an analogous result in~\cite{CSIrregular} where they extended the Futaki invariant using the index character. In fact, in the case when test configurations are $\QQ$-Gorenstein, the NA Ding functional also recovers a NA Ding functional defined in successively greater generality in~\cite{LX18, LWX,Li21} via a limit slope computation. 

Apart from the variational formulation, there is also a local valuative criterion for K-semistability introduced in~\cite{XuZhuang20, HuangThesis}. Combining with the volume minimizing properties, it is shown in~\cite{HuangThesis} that K-semistability tested on special test configurations is equivalent to certain local $\delta$-invariant being at least 1. Below is a more precise version of the Li--Xu type theorem that we prove in the local case. This is also a generalization of~\cite[Theorem 2.9]{Li19}.
\begin{thm}\label{main-thm-stability-equivalence}
    Let $(X, B; \xi)$ be a log Fano cone singularity. Then the following are equivalent:
    \begin{enumerate}
        \item $(X, B; \xi)$ is K-semistable;
        \item $M^{\na}\geq 0$ on $\cH^{\na}(\xi)$;
        \item $D^{\na}\geq 0$ on $\cH^{\na}(\xi)$;
        \item $(X, B; \xi)$ is K-semistable with respect to special test configurations;
        \item $\delta(X, B;\xi)\geq 1$.
    \end{enumerate}
\end{thm}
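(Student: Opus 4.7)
The plan is to establish the cycle $(1) \Leftrightarrow (2) \Rightarrow (3) \Rightarrow (4) \Leftrightarrow (5) \Rightarrow (1)$, routing the delicate implications through the valuative perspective offered by $\delta$.

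For $(1) \Leftrightarrow (2)$, I would invoke Theorem~\ref{thm-main-cts} and the more precise statements listed after it: the functional $M^{\na}$ evaluated on $\fs(\xi, \eta)$ agrees with the Collins--Sz\'ekelyhidi generalized Futaki invariant of $(\cX, \cB; \xi, \eta)$, and every $\varphi \in \cH^{\na}(\xi)$ is $\fs(\xi, \eta)$ for some normal test configuration by the construction of~\cite{wu}; the two conditions therefore coincide tautologically. The implication $(2) \Rightarrow (3)$ is the non-Archimedean Ding--Mabuchi inequality $D^{\na} \leq M^{\na}$ on $\cH^{\na}(\xi)$, which I would prove by decomposing $M^{\na} - D^{\na}$ as a non-negative entropy contribution exactly as in~\cite{BHJ17}. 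For $(3) \Rightarrow (4)$, when the test configuration is special the central fiber is again a log Fano cone singularity, so the entropy correction vanishes and $D^{\na}(\varphi) = M^{\na}(\varphi) = \fut$; hence $D^{\na} \geq 0$ on all of $\cH^{\na}(\xi)$ specializes to non-negativity of the Futaki invariant on every special test configuration.

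The equivalence $(4) \Leftrightarrow (5)$ is the local Fujita--Li characterization and, modulo careful handling of torus equivariance and irrationality of $\xi$, is established in~\cite{HuangThesis}: each torus-equivariant Koll\'ar component of $(X, B)$ yields a special test configuration via degeneration to the normal cone whose Futaki invariant has the same sign as $A_{(X,B)}(v) - S(v, \xi)$ for the associated valuation, and a torus-equivariant Blum--Liu--Xu-type approximation upgrades the resulting inequality on Koll\'ar components to all $T$-invariant quasi-monomial valuations. I would invoke this directly, supplying only the bookkeeping required to match conventions with the setup above.

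To close the loop, I would deduce $(5) \Rightarrow (2)$ from a local Fujita-type lower bound of the shape $M^{\na}(\varphi) \geq \bigl(1 - \delta(X, B; \xi)^{-1}\bigr) J^{\na}(\varphi)$: write $\ma(\varphi) = \sum_v c_v \delta_v$, use $A_{(X,B)}(v) \geq \delta^{-1} S(v, \xi)$ on each supporting quasi-monomial valuation, and reassemble via the local mixed Monge--Amp\`ere formalism introduced for these functionals. The principal obstacle, I expect, lies precisely in this last step: one must upgrade a valuative inequality involving $\delta$ to an inequality for $M^{\na}$ on all FS potentials, without any recourse to rationality of $\xi$. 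Here the continuity of $M^{\na}$, $D^{\na}$ and related functionals in the Reeb direction furnished by Theorem~\ref{thm-main-cts} is essential, as is a careful local analog of the integration-by-parts identities that reduce the global Fujita bound to a valuation-by-valuation estimate.
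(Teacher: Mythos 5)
Your plan has two genuine gaps, and together they leave condition (3) unreachable from the other conditions.

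First, $(1)\Leftrightarrow(2)$ is not tautological. The comparison between $M^{\na}$ and the Futaki invariant (Corollary~\ref{chap5-cor-mabuchi-ding-comp}) is a chain of inequalities $D^{\na}(\varphi)\leq M^{\na}(\varphi)\leq \fut(\cX,\cB;\xi,\eta)$ with equality only on weakly special test configurations; for a general normal test configuration $M^{\na}$ is strictly smaller than $\fut$. So $(2)\Rightarrow(1)$ is immediate, but $(1)\Rightarrow(2)$ is one of the nontrivial contents of the theorem, and in the paper it is obtained only by going around the loop $(1)\Rightarrow(4)\Leftrightarrow(5)\Rightarrow(3)\Rightarrow(2)$. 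Second, and more seriously, your step $(2)\Rightarrow(3)$ rests on the same inequality $D^{\na}\leq M^{\na}$ read in the wrong direction: $D^{\na}\leq M^{\na}$ yields $(3)\Rightarrow(2)$, not $(2)\Rightarrow(3)$. Once the directions are corrected, your scheme gives at best the equivalence of (1), (2), (4), (5) together with the one-way implications $(3)\Rightarrow(2)$ and $(3)\Rightarrow(4)$; nothing in it ever establishes $D^{\na}\geq 0$ from the other conditions.

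The paper closes exactly this gap by proving $(5)\Rightarrow(3)$ directly via the thermodynamical formalism for the Ding functional: one writes $E^{\na}$ through Legendre duality as $A(\xi)\inf_{\mu}\bigl(E^\vee(\mu;\xi)+\int\varphi\,\mu\bigr)$, applies concavity of $E^{\na}$ to the combination $\delta^{-1}\varphi+(1-\delta^{-1})\varphi_\xi$, and combines the bound $E^\vee(v;\xi)\leq\frac{n+1}{n}S(v;\xi)$ with the valuative hypothesis $A(v)\geq\delta\frac{n+1}{n}A(\xi)S(v;\xi)$ to conclude $E^{\na}(\varphi)\leq L^{\na}(\varphi)$, i.e.\ $D^{\na}(\varphi)\geq 0$. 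Your proposed closing step, a Fujita-type bound $M^{\na}\geq(1-\delta^{-1})J^{\na}$, works on the Mabuchi side instead; even if you carried out the integration-by-parts estimates it requires (which are themselves nontrivial for irrational $\xi$), it would give $(5)\Rightarrow(2)$ but still not produce the missing implication into (3). You should either adopt the Ding-side argument or supply an independent proof that $(2)$ (or $(5)$) implies $D^{\na}\geq 0$.
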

Our approach to prove this theorem uses instead a non-Archimedean thermodynamical formalism developed in~\cite{BJ18, BBJ, BJNA1}, and relies on some non-Archimedean pluripotential theory developed in~\cite{wu,NAPP}. 
While we believe the same result should be true with K-polystability in place of K-semistability, 
let us remark briefly that the main difficulty seems to be understanding a local reduced uniform stability, in which case certain NA norms would not have nice continuity properties with respect to the Reeb fields.

As suggested in the global setting from \cite[Appendix A]{BLX}, it is often useful to understand special test configurations as coming from valuations which are lc places of bounded complements. Building on a $\bT$-equivariant version of~\cite[Theorem A.2]{BLX} (see Theorem \ref{thm:torus-comp}), we also prove a local analogue for log Fano cone singularities.

\begin{thm}\label{main-thm-complements}
    Let $n$ be a positive integer and $I\subset [0,1]\cap \QQ$ be a finite set. Then there exists a positive integer $N$ depending only on $n$ and $I$ such that for any $n$-dimensional log Fano cone singularity $(X, B, \TT=\GG_m^r; \xi)$ with $\Coeff(B)\subset I$ and $E$ a $\bT$-equivariant Koll\'ar component giving rise to a nontrivial special test configuration of $(X, B; \xi)$, there exists a $\TT$-equivariant $N$-complement $\Gamma$ of $(X, B, o)$ such that $E$ is an lc place of $(X, \Gamma)$.
\end{thm}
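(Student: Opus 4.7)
The plan is to reduce the statement, via a $\bT$-equivariant plt extraction of $E$ and adjunction, to the global $\bT$-equivariant complement result Theorem~\ref{thm:torus-comp}, and then to lift the resulting complement back to $X$ by Koll\'ar's vanishing/extension argument.

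First I would extract $E$ by a $\bT$-equivariant plt blowup $\pi \colon Y \to X$. Since $E$ is a Koll\'ar component, $(Y, E + \pi_*^{-1} B)$ is plt with $-(K_Y + E + \pi_*^{-1} B)$ being $\pi$-ample; by $\bT$-equivariant adjunction, $(E, B_E)$ is an $(n-1)$-dimensional log Fano pair carrying the induced $\bT$-action, whose boundary coefficients lie in a DCC subset $I_0 \subset [0,1]\cap\QQ$ depending only on $I$ by Shokurov's different formula. Next, the hypothesis that $E$ induces a nontrivial special test configuration of $(X, B; \xi)$ means that the central fiber $(X_0, B_0; \xi)$ is itself a log Fano cone singularity with enlarged torus $\bT \times \GG_m$. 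Under this cone structure, $X_0$ is identified with the affine cone over $(E, B_E)$ via the extra $\GG_m$-direction, and the cone picture produces canonically a nontrivial $\bT$-equivariant special test configuration of the log Fano pair $(E, B_E)$ whose central fiber exhibits the specialization of $E$ as an lc place. This places $(E, B_E)$ precisely into the hypotheses of Theorem~\ref{thm:torus-comp}.

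Applying Theorem~\ref{thm:torus-comp} to this data yields a $\bT$-equivariant $N_0$-complement $\Gamma_E$ of $(E, B_E)$ with the specified divisor as an lc place, where $N_0$ depends only on $n$ and $I$. The lifting step proceeds in two substeps: I would first extend $\Gamma_E$ to a $\bT$-equivariant effective $\QQ$-divisor $\Gamma_Y$ on a $\bT$-stable open neighborhood of $E \subset Y$ satisfying $N_0(K_Y + E + \pi_*^{-1}B + \Gamma_Y) \sim_{\pi} 0$ and $\Gamma_Y|_E = \Gamma_E$, using the restriction exact sequence together with equivariant Kawamata--Viehweg vanishing on $Y$ (applicable because $-(K_Y + E + \pi_*^{-1}B)$ is $\pi$-ample); then pushing $\Gamma_Y$ forward to a $\bT$-invariant affine neighborhood of $o \in X$ yields the required $\bT$-equivariant $N$-complement $\Gamma$ of $(X, B, o)$ with $E$ as an lc place, where $N$ is a bounded multiple of $N_0$.

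The main obstacle will be the reduction step: one must carefully exhibit, from the data of the given special test configuration of $(X, B; \xi)$, an explicit $\bT$-equivariant nontrivial special test configuration of the log Fano pair $(E, B_E)$ whose specialization recovers $E$ as an lc place, so that the hypotheses of Theorem~\ref{thm:torus-comp} are genuinely met. This rests on the correspondence between $\bT$-equivariant special Koll\'ar components of $(X, B, o)$ and rational rays in the Reeb cone of $\bT \times \GG_m$ acting on $X_0$, together with their descent under the cone projection $X_0 \setminus \{o\} \dashrightarrow E$. A secondary technical point is that the adjunction coefficients $I_0$ form only a DCC set rather than a finite one, so we implicitly need the equivariant BLX-type statement for DCC coefficients; this is the natural extension, consistent with Birkar's boundedness of complements in the DCC setting.
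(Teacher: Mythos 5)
Your route---adjunction to the Koll\'ar component $E$, a bounded complement on $(E,B_E)$, then lifting back to $X$---is genuinely different from the paper's, and as written it has two real gaps. First, the coefficient set of the different $B_E$ is only a DCC (hyperstandard) set $D(I)$, not a finite one, so Theorem~\ref{thm:torus-comp} as proved in the paper does not apply to $(E,B_E)$; you acknowledge this but the needed DCC-coefficient, torus-equivariant version is not ``implicit''---it would have to be proved, and the whole point of the theorem is the uniform bound on $N$. Second, and more seriously, the lifting step is not a soft consequence of ``restriction exact sequence plus Kawamata--Viehweg vanishing.'' To extend an $N_0$-complement from $E$ to $Y$ with \emph{bounded} index you need $-N_0(K_Y+E+\pi_*^{-1}B)$ to restrict to $-N_0(K_E+\Gamma_E)$ as honest integral divisor classes, and the local orbifold indices $m$ of $E\subset Y$ (the denominators in the different $1-\tfrac{1}{m}+\tfrac{a}{m}$) are a priori unbounded; controlling this is precisely the hard content of Birkar's lifting of complements from plt centers, not something the sketch supplies. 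A smaller issue: your ``reduction step'' manufacturing a nontrivial special test configuration of $(E,B_E)$ is both murky (the degeneration of $X$ does not induce a test configuration of $E$, which sits at the \emph{end} of the degeneration) and unnecessary---for your strategy you only need \emph{some} equivariant bounded complement of $(E,B_E)$, i.e.\ the Corollary to Theorem~\ref{thm:torus-comp}, not an lc place condition on $E$ itself.

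For contrast, the paper sidesteps both problems by never doing adjunction. It first replaces $\xi$ by a rational Reeb vector, passes to the quotient log Fano pair $(V(\xi),B_{V(\xi)})$ where $\ord_E=(t\xi)*v_F$ descends to a special divisorial valuation $v_F$, applies Theorem~\ref{thm:torus-comp} there to get a $\QQ$-complement with $v_F$ an lc place, and pulls it back along the Seifert $\GG_m$-bundle to get a $\TT$-equivariant $\QQ$-complement $\Gamma_1$ of $(X,B,o)$ with $E$ an lc place. The uniform $N$ is then obtained by compactifying $X$ itself to an $n$-dimensional klt log Fano orbifold cone $(\oX,\oB)$, whose boundary coefficients still lie in the original finite set $I$, and applying the implication (2)~$\Rightarrow$~(5) of Theorem~\ref{thm:torus-comp} to $E$ over $(\oX,\oB)$. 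If you want to salvage your approach you would need to import the full strength of Birkar's plt-lifting machinery in equivariant form; the paper's compactification trick is the cheaper path.
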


\subsection*{Organization}
In Section~\ref{section: prelim} we recall basic definitions of log Fano cone singularities and non-Archimedean pluripotential theory, where we also prove a couple auxiliary lemmas we need for NA Monge--Amp\`ere measures. In Section~\ref{section: localstab} and Section~\ref{section: localval} we carefully review the theory of K-stability of log Fano cones, and spell out in detail how it is compared to the K-stabilty theory for log Fano pairs. Along the way, we prove Theorem~\ref{main-thm-complements}. We introduce the relevant NA functionals defining local K-stability and prove Theorem~\ref{thm-main-cts} in Section~\ref{section: NAfunctional}. Along the way, we also give an algebraic formula for the $L^{\na}$ functional. Finally we prove Theorem~\ref{main-thm-stability-equivalence} in Section~\ref{section: pfmain}.

\subsection*{Acknowledgements}
The first author thanks   Harold Blum, Chenyang Xu,  Chuyu Zhou, and Ziquan Zhuang  for helpful discussions.
The second author thanks Mattias Jonsson for helpful discussions.  YL is partially supported by NSF CAREER grant DMS-2237139 and the Alfred P. Sloan Foundation. YW is partially supported by NSF grants DMS-1900025, DMS-2154380 and DMS-1926686.

\section{Preliminaries}\label{section: prelim}
\subsection{Valuations and norms}
\subsubsection{The Berkovich analytification}
Let $X=\Spec R$ be a normal affine variety over an algebraically closed field $\kk$ of characteristic zero. The \emph{Berkovich analytification} of $X$ with respect to the trivial absolute value on $\kk$ is the set $X^{\an}$ consisting of all \emph{semivaluations} $v: \kk(X)\to \RR\cup \{+\infty\}$ extending the trivial valuation on $\kk$. Here, by a semivaluation, we mean a real valued valuation $v: \kk(Y)^\times\to \RR$ on a subvariety $Y\subseteq X$. As a topological space, $X^{\an}$ is equipped with the weakest topology such that $v\mapsto v(f)$ is continuous for all $f\in R$. A valuation $v$ is \emph{divisorial} if there is a prime divisor $E$ over $X$ such that $v=c\ord_E$ for some $c>0$; it is \emph{quasi-monomial} if there is a log smooth model $(Y, D=\sum_{i\in I} D_i)$ over $X$ such that around the generic point $\eta$ of an irreducible component of $\bigcap_{i\in J} D_i$ with $J\subseteq I$ and an algebraic coordinate system $(z_i)$ at $\eta$, there are $\alpha_i\geq 0$ such that for any $f\in R$, if $f=\sum_\beta c_\beta z^\beta\in \widehat{\cO_{Y, \eta}}$ with $c_\beta$ either zero or a unit, then 
\[
v(f)=\min \{\sum_i\alpha_i\beta_i\mid c_\beta\neq 0\}.
\]
\subsubsection{$\TT$-invariant valuations}
Now suppose $X$ has a good algebraic torus $\TT=\GG_m^r$ action. Recall that a $\bT$-action is good if it is effective 
and there exists a $\bT$-fixed closed point $o\in X$ (called the cone point) that is contained in the closure of every $\bT$-orbit. We write $M\coloneqq \Hom(\TT, \GG_m)$ for the weight lattice, and $N\coloneqq M^\vee = \Hom(\GG_m, \TT)$ the dual lattice. Then we have a weight decomposition $R=\bigoplus_{\alpha}R_\alpha$. Put $N_\RR = N\otimes_\ZZ \RR$, and $\Lambda\coloneqq \{\alpha: R_\alpha \neq 0\}$. A semivaluation on $v\in X^{\an}$ is \emph{$\TT$-invariant} if $v(\sum_\alpha f_\alpha) = \min_\alpha v(f_\alpha)$. Given any vector $\xi\in N_\RR$, the torus action induces a natural $*$-operation (see~\cite[\S 5.2]{Berk12}) which takes a $\TT$-invariant valuation $v$ to a $\TT$-invariant valuation $\xi*v$ via
\[
(\xi*v)(f_\alpha)\coloneqq v(f_\alpha)+\la\xi, \alpha\ra.
\]
\subsubsection{Filtrations and norms}
Let $R=\bigoplus_\alpha R_\alpha$ be as above. 
\begin{defn}
    A \emph{$\TT$-invariant} filtration on $R$ is a family of vector subspaces $\cF^\lambda R=\bigoplus_\alpha \cF^\lambda R_\alpha$ of $R$ for $\lambda\in \bR$ such that 
    \begin{enumerate}
        \item (decreasing) $\cF^\lambda R\subseteq \cF^{\lambda'}R$ if $\lambda\geq \lambda'$;
        \item (left-continuous) $\cF^\lambda R=\bigcap_{\lambda'<\lambda}\cF^{\lambda'}R$;
        \item (multiplicative) $\cF^\lambda R_\alpha\cdot \cF^{\lambda'}R_{\alpha'}\subseteq \cF^{\lambda+\lambda'}R_{\alpha+\alpha'}$.
        \item (exhaustive)  $\cF^\lambda R_\alpha=R_\alpha$ if $\lambda \ll 0$ and $\cF^\lambda R_\alpha=0$ if $\lambda\gg 0$.
    \end{enumerate}
\end{defn}
As observed in~\cite{Nystrom, BHJ17}, $\TT$-invariant filtrations on $R$ are in one-to-one correspondence with $\TT$-invariant norms on $R$ via
\[
\chi(f_\alpha)\coloneqq \sup\{\lambda\mid f_\alpha\in \cF^\lambda R_\alpha\},
\]
for any $f_\alpha\in R_\alpha$, and conversely given such a norm $\chi$, the filtration is defined by
\[
\cF^\lambda R_\alpha\coloneqq\{f_\alpha\in R_\alpha\mid \chi(f_\alpha)\geq \lambda\}.
\]
In particular, any $\TT$-invariant valuation defines a $\TT$-invariant filtration. We will be using filtrations and norms interchangeably.
\subsection{Log Fano cone singularities}
We fix a normal affine variety $X=\Spec R$ over $\kk$ with a good $\TT=\GG_m^r$ action,
and denote by $o$ the cone point, with $\fm$ being the maximal ideal defining the cone point.
\begin{defn}
    The \emph{Reeb cone} is 
    \[\ft_\RR^+\coloneqq \{\xi\in N_\RR: \la \xi, \alpha\ra >0, \ \forall \alpha\in \Lambda\setminus\{0\}\}.\]
    A vector $\xi\in \ft_\RR^+$ is called a \emph{Reeb field}.
    A \emph{log Fano cone singularity} is the data $(X, B, \TT; \xi)$ such that 
    $X$ is a normal affine $\TT$-variety with a good torus action, $B$ is a $\TT$-invariant vertical $\QQ$-divisor on $X$ such that $K_{(X,B)}\coloneqq K_X+B$ is $\QQ$-Cartier, $(X, B)$ has klt singularities, and $\xi$ is a Reeb field. 
\end{defn}
\begin{defn}
    A Reeb field $\xi$ is called \emph{quasi-regular} or \emph{rational} if $\xi\in \ft_\QQ^+\coloneqq \ft_\RR^+\cap N_\QQ$. It is called \emph{irregular} or \emph{irrational} otherwise. For a quasi-regular Reeb field $\xi$, we define its \emph{primitive vector} $\hat \xi\coloneqq l\xi$ as the multiple of $\xi$ such that $\hat\xi\in N\cap \ft_\RR^+$ is a primitive integer vector, i.e., $l$ is smallest with $\la l\xi, \alpha\ra\in \ZZ, \forall \alpha\in \Lambda$. 
\end{defn}
\begin{defn}
    The \emph{non-Archimedean (NA) link} of $X$ at $o$ is 
    \[X_0\coloneqq \{v\in X^{\an} : v(\fm)=0\}.\]
    We will denote by $X_0^{\TT}$ the set of $\TT$-invariant points in $X_0$.
\end{defn}
\begin{remark}
    We remark that the NA link usually refers to the space $\Val_{X,x}$ in the literature, see e.g.~\cite{Fantini}.
    As explained in~\cite{wu}, the NA link we defined is the set $\{r^{\na}=1\}$ for certain NA reference cone metric $r$ on the Berkovich analytification $X^{\an}$, and hence a NA analog of the \emph{link} in the Sasakian geometry literature, see~\cite{sasaki}.
\end{remark}

Given a log Fano cone singularity, the Reeb vector $\xi$ defines a $\TT$-invariant quasi-monomial valuation $v_\xi$ via $v_\xi(f_\alpha)=\la \xi, \alpha\ra$ for any $f_\alpha\in R_\alpha$. We will write $A(\xi)\coloneqq A_{(X, B)}(v_\xi)$. The following is well-known.
\begin{prop}\label{prelim-prop-quotient-log-fano}
    Let $(X=\Spec R, B; \xi)$ be a log Fano cone singularity with $\xi\in \ft_\QQ^+$, then the quotient of $X\setminus\{o\}$ by $\la\xi\ra$ is a log Fano pair.
\end{prop}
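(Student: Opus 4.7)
My proof proposal proceeds by realizing $X$ as a (generalized) affine cone over the expected quotient log Fano pair, and then invoking the standard theory of singularities of cones.

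\emph{Grading from $\hat\xi$.} Let $\hat\xi = \ell\xi \in N \cap \ft_\RR^+$ be the primitive vector. Viewing $\hat\xi$ as a cocharacter $\GG_m \hookrightarrow \TT$, it induces a $\ZZ$-grading $R = \bigoplus_{m \in \ZZ} R_m$, where $R_m := \bigoplus_{\la\hat\xi,\alpha\ra = m} R_\alpha$. Because $\xi$ lies in the Reeb cone, $\la\hat\xi,\alpha\ra > 0$ for every $\alpha \in \Lambda\setminus\{0\}$, and because $o$ lies in the closure of every $\TT$-orbit, $R_0 = \kk$. Hence $R$ is a finitely generated positively graded $\kk$-algebra with $R_0 = \kk$. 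Setting $Y := \Proj R$ yields a normal projective variety, which is (by construction) the quotient of $X$ by the $\GG_m$-action generated by $\hat\xi$, equivalently by $\la\xi\ra$.

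\emph{Descending $B$.} After replacing $R$ by a Veronese subring $R^{(k)}$ (which does not change $Y$), one may assume $\cO_Y(1)$ is a genuine ample Cartier line bundle $L$, so that $X = \Spec\bigoplus_{m\geq 0}H^0(Y,mL)$ exhibits $X$ as the affine cone $C(Y,L)$. Let $\pi\colon X\setminus\{o\}\to Y$ be the induced quotient map. The $\TT$-invariance of $B$ together with the verticality hypothesis guarantees that $B|_{X\setminus\{o\}} = \pi^* B_Y$ for a uniquely determined effective $\QQ$-divisor $B_Y$ on $Y$; this is the candidate boundary of the quotient pair.

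\emph{Transferring log Fano-ness.} One now appeals to Koll\'ar's theory of singularities of affine cones: for $X = C(Y,L)$ with $L$ ample Cartier and $B = \pi^* B_Y$ a vertical effective $\QQ$-divisor, $K_X + B$ is $\QQ$-Cartier if and only if $K_Y + B_Y \sim_\QQ rL$ for some $r\in\QQ$, and in that case $(X,B)$ is klt at the cone point $o$ if and only if $(Y,B_Y)$ is klt and $r < 0$. Applied to our hypotheses this immediately yields that $(Y,B_Y)$ is klt with $-(K_Y + B_Y)$ ample, i.e., $(Y,B_Y)$ is a log Fano pair.

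The main technical nuisance (rather than a genuine obstacle) is the Veronese replacement: the grading induced by $\hat\xi$ need not be generated in degree $1$, so $\cO_Y(1)$ is a priori only $\QQ$-Cartier, and passing to a Veronese is needed to put the setup into the cleanest form of the cone-singularities dictionary. Once this bookkeeping is done, the descent of $B$ through the Seifert $\GG_m$-bundle $\pi$ and the transfer of klt plus ampleness are entirely standard.
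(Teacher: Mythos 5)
There is a genuine gap in the step ``after replacing $R$ by a Veronese subring $R^{(k)}$ \dots one may assume $\cO_Y(1)$ is a genuine ample Cartier line bundle $L$, so that $X=C(Y,L)$.'' Passing to $R^{(k)}$ does not change $Y=\Proj R$, but it does change $X$: one has $\Spec R^{(k)}\cong X/\mu_k$. Since $X\setminus\{o\}\to Y$ is only a Seifert $\GG_m$-bundle, the $\GG_m$-action has stabilizer $\mu_{b_i}$ at points lying over the components $D_i$ of the fractional part $\sum_i\frac{a_i}{b_i}D_i$ of the induced polarization, so for $k$ divisible enough the quotient map $X\to X/\mu_k$ is ramified in codimension one. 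Consequently you cannot transfer the hypotheses on $(X,B)$ to the Cartier cone $C(Y,kL)$ with the naive boundary: the ramification formula forces an extra boundary term downstairs. This is precisely the orbifold boundary $L_{\mathrm{orb}}=\sum_i(1-\tfrac1{b_i})D_i$ that your candidate $B_Y$ omits. The quotient pair the proposition refers to is $(V,B_V)$ with $B_V=L_{\mathrm{orb}}+B'$, where $B'$ is the descent of $B$; with your $B_Y=B'$ the divisor $K_Y+B_Y$ need not even be $\QQ$-Cartier (only $K_V+L_{\mathrm{orb}}+B'$ is forced to be), and $(Y,B_Y)$ is in any case not the quotient of $(X,B)$ by $\la\xi\ra$. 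So the ``standard cone dictionary'' you invoke is being applied outside its domain, to the wrong variety and with the wrong boundary.

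The paper sidesteps the Veronese reduction entirely: it keeps $L$ as an ample $\QQ$-divisor, defines $B_V=L_{\mathrm{orb}}+B'$ directly (following \cite[Proposition 3.14]{kollarMMP}), and establishes ampleness of $-(K_V+B_V)$ by an adjunction computation on the partial resolution $\tilde X\to X$ extracting $V$ as the exceptional divisor $E$, which yields $K_V+B_V\sim_\QQ A_{(X,B)}(E)\,(-L)|_E$ and hence the claim from $A_{(X,B)}(E)>0$, i.e.\ from klt-ness of $(X,B)$. To repair your argument you would need either to use the version of the cone/Seifert-bundle dictionary that incorporates the orbifold boundary from the multiple fibers, or to work $\GG_m$-equivariantly on $X$ itself rather than on the Veronese quotient $X/\mu_k$.
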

\begin{proof}
   After rescaling we may assume that $\xi\in N$ is a primitive vector. Since $\xi$ generates a faithful $\GG_m$-action, one can think of $X\setminus\{o\}$ as the complement of the zero section in the total space of an ample orbiline bundle over some orbifold (see~\cite{RossThomas}). In algebro-geometric terms, $X\setminus \{o\}$ is a Seifert $\GG_m$-bundle over a projective variety $V = \Proj R$. It is shown in~\cite{kollar} that the coordinate ring $R$ with the new grading induced by the $\xi$ action can be viewed as the section ring of some ample $\bQ$-Cartier  $\QQ$-divisor $L$ on $V$:
    \[R = \bigoplus_\alpha R_\alpha = \bigoplus_{k\in \bZ_{\geq 0}}\left(\bigoplus_{\la \xi, \alpha\ra = k}R_\alpha\right) = \bigoplus_{k\in \ZZ_{\geq 0}} H^0(V, \cO_V(\lfloor kL\rfloor)).\]
    If the fractional part of $L$ is given by $\sum_i \frac{a_i}{b_i}D_i$ where $a_i<b_i$ are positive integers and $\gcd(a_i,b_i)=1$, we put $L_{\mathrm{orb}}\coloneqq \sum_i(1-\frac{1}{b_i})D_i$ as the orbifold boundary of $V$ to remember the orbifold structure. 
    Let $B_V=L_{\mathrm{orb}}+B'$, where $B'$ is the $\bQ$-divisor whose pullback under $\pi: X\setminus\{o\}\to V$ is $B$. Then $(V, B_V)$ is the quotient by $\la\xi\ra$.
    To see that $(V, B_V)$ is log Fano, this is the content of~\cite[Proposition 3.14]{kollarMMP} when $L_{\rm orb}=0$. We include a proof of the general case for the sake of completeness. 
    Note that by~\cite[Proposition 16]{kollar} we have $\mathrm{Pic}(X)$ is trivial, and that $\mathrm{Cl}(X) = \mathrm{Cl}(V)/\la L\ra$, we have $rL\sim mK_{(V,B_V)}$ for some $r\in \QQ$ since $mK_{(X,B)}$ is Cartier for $m$ divisible enough. Let $\tilde{X}$ be the total space of of the line bundle, obtained by a weighted blowup at $o$. Then the exceptional disivor $E\subset \tilde{X}\xrightarrow{p} X$ is isomorphic to $V$. By adjunction, we have
    \[(K_{\tilde{X}}+ B_{\tilde{X}}+E)|_E = K_V+B_V = A_{(X,B)}(E)E|_E  = A_{(X,B)}(E)(-L)|_E. \]
    Hence $r= -A_{(X,B)}(E)<0$ since $(X,B)$ has klt singularities, and so $(V,B_V)$ is a log Fano pair.
\end{proof}

Using this, we can relate the log discrepancy function on $X$ with the one on the quotient log Fano, and its restriction to $X_0^{\TT}$.
\begin{lem}\label{LogDiscrepancy}
Let $(X,B) = C(V,B_V,L=-rK_{(V, B_V)})$ be the cone over a log Fano pair. Then
\[A_{(X,B)}(v)= A_{(V, B_V)}(v),\]
for all quasi-monomial valuations $v\in X_0^{\TT}$.
\end{lem}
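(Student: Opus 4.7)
The argument has three main ingredients: (i) identifying quasi-monomial valuations in $X_0^{\TT}$ with quasi-monomial valuations on $V$, (ii) constructing a log smooth model on $X$ that lifts a log smooth model on $V$, and (iii) matching relative canonical classes via cone adjunction.

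First, I would verify the correspondence underlying the statement: restriction to the $\TT$-invariant subfield $k(V)\subset k(X)$ sends a quasi-monomial $v\in X_0^{\TT}$ to a quasi-monomial valuation $w$ on $V$, and conversely any such $w$ lifts uniquely to a $\TT$-invariant $v$ with $v(\fm)=0$ by setting $v(f_\alpha):=w(f_\alpha/s^{\alpha})$ for a local trivialization $s$ of the relevant power of $L$ (well-definedness uses $v(\fm)=0$, i.e.\ that $v$ is ``horizontal'' for the $\GG_m$-fibration $\pi:X\setminus\{o\}\to V$).  Both sides of the claimed identity are $\QQ$-linear in the quasi-monomial weights and continuous as the weights vary, so it suffices to check the equality on divisorial valuations arising from prime divisors $E$ on some log resolution $f:Y\to V$ of $(V,B_V)$.

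Next, I would build the compatible model.  Let $\tilde X\to X$ be the blowup of $o$, so that $\tilde X=\mathrm{Tot}(L)\xrightarrow{\pi} V$, and form the line bundle $g:Y_X:=Y\times_V\tilde X=\mathrm{Tot}(f^*L)\to Y$.  The composition $h:Y_X\to X$ is an isomorphism over $X\setminus\{o\}$ outside the zero section, and $\bar E:=g^{-1}(E)$ is a prime divisor whose associated divisorial valuation lies in $X_0^{\TT}$ and restricts to $\ord_E$ on $V$.  The log discrepancies can then be read off the two models:
\[
A_{(V,B_V)}(E)=1+\ord_E\bigl(K_Y-f^*(K_V+B_V)\bigr),\qquad A_{(X,B)}(\bar E)=1+\ord_{\bar E}\bigl(K_{Y_X}-h^*(K_X+B)\bigr).
\]

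Finally, I would compare these via two adjunction identities.  The line bundle structure of $g$ gives $K_{Y_X/Y}\sim_\QQ -g^*f^*L=-h^*L$; the $\GG_m$-bundle structure of $\pi$ on $X\setminus\{o\}$, combined with $L=-r\,K_{(V,B_V)}$, gives
\[
(K_X+B)\big|_{X\setminus\{o\}}\sim_\QQ \pi^*(K_V+B_V)-\pi^*L=(1+r)\,\pi^*(K_V+B_V).
\]
Substituting and using $h=\pi\circ g$ (on $X\setminus\{o\}$), the $L$-contributions cancel and we obtain
\[
K_{Y_X}-h^*(K_X+B)\;\equiv\;g^*\bigl(K_Y-f^*(K_V+B_V)\bigr)
\]
modulo divisors not containing $\bar E$ (in particular not containing $g^{-1}(E)$, since $\bar E$ is horizontal).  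Taking $\ord_{\bar E}$ and using $\ord_{\bar E}\circ g^*=\ord_E$ yields $A_{(X,B)}(\bar E)=A_{(V,B_V)}(E)$, and the quasi-monomial case follows by linearity.

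\textbf{Main obstacle.}  The delicate step is the careful bookkeeping of vertical versus horizontal contributions to $K_{Y_X/X}$: the map $h:Y_X\to X$ has a large exceptional divisor over $o$ coming from the zero section of $g$, which contributes significantly to the discrepancy there.  The point is that since $\bar E$ is horizontal (equivalently $v(\fm)=0$), $\ord_{\bar E}$ does not see this exceptional contribution, and the cone relation $L=-r\,K_{(V,B_V)}$ is exactly what forces the remaining terms to cancel.
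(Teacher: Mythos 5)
Your overall route is the same as the paper's (reduce to divisorial valuations, build a model of $X$ lying over a model of $V$, and compare relative canonical classes), but there is a genuine gap: you treat $\pi:X\setminus\{o\}\to V$ as an honest principal $\GG_m$-bundle, i.e.\ you assume $L$ is Cartier so that $\mathrm{Tot}(L)$ and $\mathrm{Tot}(f^*L)$ make sense and $g^{-1}(E)$ is the reduced preimage with $\ord_{\bar E}\circ g^*=\ord_E$. In the log Fano cone setting $L$ is only an ample $\QQ$-Cartier $\QQ$-divisor and $X\setminus\{o\}\to V$ is a \emph{Seifert} $\GG_m$-bundle, ramified over the orbifold locus of $L$; accordingly $B_V$ contains the orbifold boundary $L_{\mathrm{orb}}=\sum_i(1-\tfrac{1}{b_i})D_i$. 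Two things then break in your argument. First, $Y\times_V\tilde X$ need not be normal or a line bundle over $Y$, and more to the point the scheme-theoretic pullback of $E$ is $s\bar E$ for some positive integer $s$ (the Weil index of $E$ along $\bar E$), so $\ord_{\bar E}\circ g^*=s\cdot\ord_E$ rather than $\ord_E$; the valuation in $X_0^{\TT}$ restricting to $\ord_E$ is $s^{-1}\ord_{\bar E}$, not $\ord_{\bar E}$. Second, your adjunction $K_{Y_X/Y}\sim_\QQ -g^*f^*L$ is the line-bundle formula and misses the ramification of the Seifert bundle along the branch divisors $D_i$; without the compensating contribution of $L_{\mathrm{orb}}\subset B_V$ your two relative canonical classes do not match along divisors meeting the orbifold locus.

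The paper's proof is precisely the bookkeeping you skipped: following \cite[Proposition 5.20]{KM98}, it writes $K_{(\tilde X',\tilde B')}+E$ in two ways, once pulled back from the partial resolution $\tilde X\to X$ and once pulled back from $V'$, with the Weil index $s$ appearing as the coefficient of $E$ in $\pi'^*F$. The two expressions give $A_{(X,B)}(\ord_E)=sA_{(V,B_V)}(F)$, whence $A_{(X,B)}(s^{-1}\ord_E)=A_{(V,B_V)}(F)$. Your argument is correct as written only in the special case where $L$ is Cartier at the generic point of the center of $E$ (so $s=1$ and no orbifold boundary is seen); to repair it in general you would need to either pass to a local index-one cover of $L$ near that point or insert the index $s$ and the $L_{\mathrm{orb}}$ correction explicitly, which is exactly what the paper does.
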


\begin{proof}
This is already done in multiple places, see e.g.~\cite{PS, Moraga}.
    Let's first take care of the case when $v$ is given by a prime divisor. Assume
\[F\subseteq V' \xrightarrow[]{
\mu} V\]
is a prime divisor over $V$, with $V'$ normal and $v=\ord_F$.

We have the following commutative diagram:
\begin{eqnarray*}\label{commdiag}
\begin{tikzcd}
E \arrow[dd, "\pi'"] \arrow[r, phantom, sloped, "\subset"]  &\tilde{X}' \arrow[dd, "\pi'"] \arrow[rr, "\tilde{\mu}"] &  & \tilde{X} \arrow[dd, "\pi"] \arrow[r, "r"] & X \\
 & &  &    &   \\
F \arrow[r, phantom, sloped, "\subset"]   &V' \arrow[rr, "\mu"]                                    &  & V  &  
\end{tikzcd}
\end{eqnarray*}
where $\tilde{X}$ denotes the partial resolution of $X$ extracting the divisor isomorphic to $ V$,  $E = (\pi'^*F)_{\mathrm{red}}$.

We now follow the proof of~\cite[Propostion 5.20]{KM98}. Let $\tilde{B}$ be such that $\pi^*K_{(V, B_V)}=K_{(\tilde{X}, \tilde{B})}$ and $\tilde{B}'$ the strict transform of $\tilde{B}$. Let $s$ be the Weil index of $F$ along $E$. Near the generic point $e\in E$, one has
\begin{align*}
    K_{(\tilde{X}',\tilde{B}')} + E &= \tilde{\mu}^*K_{(\tilde{X}, \tilde{B})}+ A_{(X,B)}(E) E\\
    &= \tilde{\mu}^*\pi^* K_{(V,B_V)} + A_{(X,B)}(E) E\\
    &= \pi'^*\mu^*K_{(V, B_V)} + A_{(X,B)}(E) E,
\end{align*}
and 
\begin{align*}
    K_{(\tilde{X}',\tilde{B}')} + E &= \pi'^*K_{(V', B_V')}+ sE\\
    &=\pi'^*\mu^*K_{(V, B_V)} + (A_{(V, B_V)}(F)-1)\pi'^*F + sE\\
    &=\pi'^*\mu^*K_{(V, B_V)} + sA_{(V,B_V)}(F) E.
\end{align*}
Thus $A_{(X,B)}(v) = A_{(X,B)}(s^{-1}\ord_E) = s^{-1} sA_{(V,B_V)}(F) = A_{(V,B_V)}(F). $ This proves the identity for all divisorial valuations. Now if $v$ is quasi-monomial, then in view of the fact that $X_0^\TT\subset V^{\an}$ (see~\cite[Lemma 5.11]{wu}), $A_{(X,B)}(v)$ can be written as a linear combination of log discrepancies of divisorial valuations on some log smooth model, and hence the equality extends to quasi-monomial valuations.
\end{proof}

\begin{lem}\label{logdiscreptranslation}
Let $v\in X_0^\TT$ be a quasi-monomial valuation. Then we have
\[A_{(X,B)}(\xi*v) = A_{(X,B)}(\xi)+A_{(X,B)}(v),\]
for all $\xi$ in the Reeb cone of $X$.
\end{lem}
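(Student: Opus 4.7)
The plan is to produce a common $\bT$-equivariant log resolution on which $v$, $v_\xi$, and $\xi*v$ all admit simultaneous monomial descriptions at a $\bT$-fixed closed point with non-negative weights, and then invoke linearity of log discrepancy on monomial valuations.

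I will start by choosing, via $\bT$-equivariant resolution of singularities, a $\bT$-equivariant log resolution $\mu:Y\to X$ of $(X,B)$ on which $v$ is monomial, with $\bT$-invariant SNC boundary $D=\sum_i D_i$. Since $v\in X_0^\bT$, the center $\overline{c_X(v)}$ is a closed $\bT$-invariant subvariety of $X$ containing the cone point $o$, so $\overline{c_Y(v)}$ meets the projective exceptional fiber $\mu^{-1}(o)$. After further $\bT$-equivariant blowups if necessary, I will arrange that $\overline{c_Y(v)}$ and $\overline{c_Y(v_\xi)}\subseteq\mu^{-1}(o)$ intersect nontrivially; Borel's fixed-point theorem applied to the resulting nonempty $\bT$-invariant projective subvariety $\overline{c_Y(v)}\cap\overline{c_Y(v_\xi)}$ then yields a $\bT$-fixed closed point $p$ lying in both closures.

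Working in a $\bT$-equivariant formal neighborhood of $p$, I would choose local eigenvector coordinates $z_1,\ldots,z_n$ with $\bT$-weights $w_i\in M$ such that the boundary components through $p$ are cut out by some of the $z_i$'s. The hypothesis $p\in\overline{c_Y(v)}\cap\overline{c_Y(v_\xi)}$ forces $a_i:=v(z_i)\geq 0$ and $\langle\xi,w_i\rangle\geq 0$ for every $i$. A direct calculation using the $\bT$-eigenspace decomposition of a formal series $f=\sum_\beta c_\beta z^\beta\in\widehat{\cO_{Y,p}}$ and the $\bT$-invariance of $v$ and $\xi*v$ gives
\[(\xi*v)(f)=\min_{\beta:\,c_\beta\neq 0}\sum_i\beta_i\bigl(a_i+\langle\xi,w_i\rangle\bigr),\]
so $v_\xi$ and $\xi*v$ are themselves monomial at $p$, with non-negative weights $\langle\xi,w_i\rangle$ and $a_i+\langle\xi,w_i\rangle$ respectively.

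Since the log discrepancy of a monomial valuation is linear in its weights, applying this formula to $v$, $v_\xi$ and $\xi*v$ gives
\[A_{(X,B)}(\xi*v)=\sum_i\bigl(a_i+\langle\xi,w_i\rangle\bigr)\,A_{(X,B)}(\ord_{D_i})=A_{(X,B)}(v)+A_{(X,B)}(v_\xi),\]
which is the desired identity. The principal technical obstacle I expect is producing the common $\bT$-fixed point $p$ in the first step: one has to guarantee that the centers of $v$ and $v_\xi$ intersect in a $\bT$-invariant subvariety of the projective exceptional fiber $\mu^{-1}(o)$, which may require a careful sequence of $\bT$-equivariant blowups before Borel's theorem becomes applicable.
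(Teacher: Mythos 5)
Your overall strategy---find one $\bT$-equivariant log smooth model on which $v$, $v_\xi$ and $\xi*v$ are simultaneously monomial with non-negative weights at a common $\bT$-fixed point, then use linearity of $A$ on monomial valuations---would indeed give the identity if such a model and point existed, and your local computation of $(\xi*v)$ in eigencoordinates at a fixed point is correct. The gap is in the first step, and your proposed remedy does not close it. If $Y'\to Y$ is any further blowup, the center of a valuation on $Y'$ maps into its center on $Y$; hence if $\overline{c_Y(v)}$ and $\overline{c_Y(v_\xi)}$ are already disjoint on $Y$, they remain disjoint on every higher model, so ``further $\bT$-equivariant blowups'' can never create the intersection you need before invoking Borel's fixed-point theorem. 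Moreover, the initial choice of $Y$ resolving $v$ can already separate the two centers irreparably: take $X=\AA^2$ with the standard $\GG_m^2$-action, $v=\ord_{\{x=0\}}$ and $\xi=(1,\sqrt2)$, and let $Y$ be the blowup at the origin. Then $\overline{c_Y(v_\xi)}$ is the torus-fixed point of the cone $\langle e_1+e_2,e_2\rangle$ while $\overline{c_Y(v)}$ is the strict transform of $\{x=0\}$, which misses that point; the only model on which all three valuations are monomial at a common point is $X$ itself. So the existence of a suitable $(Y,D,p)$ is a genuine claim requiring proof (essentially that $v$ and $v_\xi$ lie in a common face of $\QM(Y,D)$ for some equivariant, e.g.\ toroidal, model), and it already contains the nontrivial assertion that $\xi*v$ is quasi-monomial for irrational $\xi$, which your argument implicitly assumes it can establish along the way.

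The paper avoids this entirely by a two-step argument of a different flavor: for rational $\xi$ it identifies $v$ with a valuation on the quotient $(X\setminus\{o\})/\langle\xi\rangle$ and realizes $\xi*v$ via a weight-$(1,1)$ blowup, where the additivity is immediate; for irrational $\xi$ it approximates by rational $\xi_i$ and uses lower semicontinuity of the log discrepancy to get $A(\xi*v)\leq A(\xi)+A(v)$, then obtains the reverse inequality by choosing $\eta$ with $\xi+\eta$ rational and comparing $A((\xi+\eta)*v)=A(\xi+\eta)+A(v)$ with $A(\eta)+A(\xi*v)$. If you want to salvage your approach, you would need to prove the common-face statement for $\bT$-invariant quasi-monomial valuations (for instance by working over the quotient as in \cite[Lemma 5.11]{wu}), at which point you have essentially reconstructed the paper's rational-case reduction.
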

\begin{proof}
    We will write $A$ for $A_{(X, B)}$. If $\xi$ is rational, then by~\cite[Lemma 5.11]{wu}, $v$ can be seen as a valuation on the quotient $(X\setminus\{o\})/\langle \xi\rangle$, and $\xi*v$ is a weight $(1,1)$-blowup (see e.g.~\cite[Proof of Proposition 4.25]{Xusurvey}), in which case the equality follows. 
    In general, approximate $\xi$ by rational $\xi_i$'s. Then by lower-semicontinuity of the log discrepancy \cite{JM12, bdffu}, one has
    \[A(\xi*v)\leq \liminf_i (A(\xi_i)+A(v)) = A(\xi)+ A(v).\]
    On the other hand, we may pick $\eta\in \ft_\RR^+$ such that $\xi+\eta$ is rational. Then 
    \[A(\xi)+A(\eta)+A(v) = A(\xi+\eta)+A(v) = A((\xi+\eta)*v) \leq A(\eta) + A(\xi*v).\]
    Combining the above two inequalities we get what we want. 
\end{proof}

\subsection{Non-Archimedean pluripotential theory}
In the remaining part of this section, we recall some non-Archimedean pluripotential theory, and
study how the support of the NA Monge--Amp\`ere measures change with respect to different Reeb fields $\xi$. Throughout, we will work with $X^{\an}$, the Berkovich analytification of $X$ with respect to the trivial valuation on $\kk$. Using the theory of forms and currents on Berkovich spaces developed in~\cite{CLD12}, we studied psh functions coming from test configurations and in particular the Monge--Amp\`ere operator in~\cite{wu, NAPP}. More precisely, to each $\TT$-invariant norm $\chi$ of finite type, i.e. $\mathrm{gr}_\chi R$ is finitely generated, one can associate a Fubini-Study (FS) function of the form 
\[\fs(\xi, \chi)\coloneqq \max_{f_\alpha\in A}\left\{\frac{\log|f_\alpha|+\chi(f_\alpha)}{\la\xi,\alpha\ra}\right\}\]
on $X^{\an}\setminus\{o\}$, where $A$ is a finite set determined by $\chi$ (see~\cite[Lemma 5.3]{wu}), 
which is psh-approachable in the sense of Chambert-Loir and Ducros, and $\log|f_\alpha|: X^{\an}\to \RR\cup \{\infty\}, v\mapsto -v(f_\alpha)$. In particular, when $\chi=0$ is the trivial norm, we get the trivial potential representing the reference NA metric on $X^{\an}$, which we denoted by $\varphi_\xi$.
This allows us to define a Monge--Amp\`ere operator $\ma: \cH(\xi)\to \cM(X_0)$ from the space of FS functions to the space of Radon probability measures on $X$ via
\[\ma(\varphi)\coloneqq \frac{1}{\vol(\xi)}(d'd''\varphi)^{n-1}\wedge d'd''\max\{\varphi_\xi, 0\}.\]
It is further shown in~\cite{wu, NAPP}, following the strategy developed in~\cite{BJ18}, that this extends continuously to an operator $\ma: \cpsh(\xi)\to \cM^1(\xi)$, where $\cpsh(\xi)\supset \cH(\xi)$ is a class of continuous $\xi$-psh functions, and $\cM^1(\xi)$ denotes the space of Radon probability measures of finite energy supported on $X_0$.

Similar to the complex analytic theory, \emph{currents} (and in particular measures) are defined as linear functionals on the space of compactly supported differential forms, and they satisfy similar properties as their complex counterpart. For example, here is a fact we need later:
\begin{prop}\label{prop-current-supp}
    Let $\varphi: W\to Z$ be a compact morphism of Berkovich analytic spaces, and $T$ a current on $W$. Then $\varphi_*T$ is a current on $Z$ such that $\supp(\varphi_*T)\subset \varphi(\supp(T))$.
\end{prop}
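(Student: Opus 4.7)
The plan is to mirror the classical definition of pushforward of currents (as in the complex-analytic or real-differential setting), adapted to the Chambert-Loir--Ducros framework of forms and currents on Berkovich analytic spaces. Write $\cA_c^\bullet$ for the space of compactly supported smooth (in the sense of Chambert-Loir--Ducros) differential forms. By definition, a current $T$ on $W$ is a continuous linear functional on $\cA_c^\bullet(W)$. The candidate definition is
\[
\langle \varphi_* T, \omega\rangle := \langle T, \varphi^*\omega\rangle, \qquad \omega\in \cA_c^\bullet(Z).
\]
For this to make sense we need $\varphi^*\omega$ to be compactly supported on $W$, which is exactly where the compactness (i.e., properness) of $\varphi$ enters: the support of $\varphi^*\omega$ is contained in $\varphi^{-1}(\supp\omega)$, and the preimage of a compact set under a compact morphism is compact. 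Continuity of $\varphi_* T$ then follows from continuity of $T$ together with continuity of pullback on smooth forms. This shows $\varphi_*T$ is a well-defined current on $Z$.

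For the support statement, first note that a compact morphism of Berkovich analytic spaces is a closed map (the image of a closed set under a proper/compact map is closed; this is the Berkovich analogue of properness being stable under restriction to closed subsets). Hence $\varphi(\supp T)$ is closed in $Z$, and it makes sense to ask whether its complement is open. Pick any point $z\in Z\setminus \varphi(\supp T)$ and any small neighborhood $U$ of $z$ whose closure is disjoint from $\varphi(\supp T)$. For any $\omega\in \cA_c^\bullet(Z)$ with $\supp\omega\subset U$, the inclusion $\supp(\varphi^*\omega)\subset \varphi^{-1}(\supp\omega)$ gives $\supp(\varphi^*\omega)\cap \supp T=\emptyset$, so $\langle T,\varphi^*\omega\rangle=0$. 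This exhibits $z$ outside $\supp(\varphi_* T)$, whence $\supp(\varphi_*T)\subset \varphi(\supp T)$.

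The only delicate point is the first step: one must invoke the Chambert-Loir--Ducros formalism to verify that pullback by a compact morphism preserves the class of compactly supported smooth forms and is continuous, and that compact morphisms are closed in the Berkovich category. Both facts are standard in \cite{CLD12}, so I expect no serious obstacle; the argument is essentially a verbatim transcription of the real/complex analytic proof in the Berkovich setting.
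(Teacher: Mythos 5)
Your argument is correct and is exactly the standard one: define $\varphi_*T$ by duality via $\langle \varphi_*T,\omega\rangle=\langle T,\varphi^*\omega\rangle$, use compactness of $\varphi$ to see that $\varphi^*$ preserves compact supports (and that $\varphi(\supp T)$ is closed), and deduce the support inclusion by testing against forms supported away from $\varphi(\supp T)$. The paper offers no proof of this proposition, stating it as a known fact from the formalism of \cite{CLD12}, so your write-up simply supplies the intended standard argument.
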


We now collect a couple facts that we need later concerning the support of Monge--Amp\`ere measures. From now on, we fix a norm $\chi$ of finite type, and denote by 
\[\varphi = \fs(\xi, \chi) := \max\left\{\frac{\log|f_j|+\chi(f_j)}{\langle \xi,\alpha_j\rangle}\right\}\in \cH(\xi)\]
the FS function corresponding to $\chi$, with the polarization given by $\xi$. 
Let $\xi_i$ be a sequence of Reeb fields converging to $\xi$. We write 
\[\varphi^i= \fs(\xi_i, \chi):=\max\left\{\frac{\log|f_j|+\chi(f_j)}{\langle\xi_i,\alpha_j\rangle}\right\}\in \cH(\xi_i)\]
for the FS functions corresponding to the same norm $\chi$, but with respect to Reeb fields $\xi_i$. In a similar manner, we set 
\[\mu^i_t:=\mu_t(\varphi^i, \xi_i):= \sum_{j=0}^{n-1} (d'd''\varphi^i)^j\wedge (d'd''\varphi_{\xi_i})^{n-1-j}\wedge \max\{\varphi_{\xi_i}, -t\},\]
and 
\[\mu_t:=\mu_t(\varphi, \xi):= \sum_{j=0}^{n-1} (d'd''\varphi)^j\wedge (d'd''\varphi_{\xi})^{n-1-j}\wedge \max\{\varphi_{\xi}, -t\}\]
for $t\in \RR_{\geq 0}.$

\begin{lem}\label{ctsMAsupp}
    For fixed $t\in \RR_{\geq 0}$, the measures $\mu_t^i$ converge weakly to $\mu_t$ as $i\to \infty$.
\end{lem}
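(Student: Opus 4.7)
The plan is to deduce weak convergence from two ingredients: (i) uniform convergence of the defining potentials $\varphi^i \to \varphi$ and $\varphi_{\xi_i} \to \varphi_\xi$, and (ii) continuity of mixed Monge--Amp\`ere products under such uniform convergence, as developed in~\cite{wu, NAPP}.

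First I would establish the uniform convergence. The finite set $A \subset R$ appearing in the FS presentation $\fs(\xi, \chi) = \max_{f_\alpha \in A}\{(\log|f_\alpha| + \chi(f_\alpha))/\langle \xi, \alpha\rangle\}$ depends only on $\chi$, and similarly the set of weights $\alpha$ used to define $\varphi_{\xi}$ is independent of the Reeb vector. Since $\xi \in \ft_\RR^+$ and $\alpha \in \Lambda \setminus \{0\}$, we have $\langle \xi, \alpha\rangle > 0$ uniformly bounded below for all relevant $\alpha$, and $\langle \xi_i, \alpha\rangle \to \langle\xi, \alpha\rangle$. Combined with finiteness of $A$, this yields uniform convergence (on compact subsets of $X^{\an}\setminus\{o\}$, hence in particular on $X_0$) of the FS expressions $\varphi^i - \varphi_{\xi_i} \to \varphi - \varphi_\xi$, and analogously of the truncations $\max\{\varphi_{\xi_i}, -t\} \to \max\{\varphi_\xi, -t\}$.

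Next I would expand
\[
\mu_t^i = \sum_{j=0}^{n-1} \Theta^{(j)}_i \cdot \max\{\varphi_{\xi_i}, -t\}, \qquad \Theta^{(j)}_i := (d'd''\varphi^i)^j \wedge (d'd''\varphi_{\xi_i})^{n-1-j},
\]
and treat each summand separately. By the continuity of the mixed Monge--Amp\`ere operator on $\cpsh(\xi)$ (and its obvious multilinear extension to mixed products of FS functions built from the same ambient data) proved in~\cite{wu, NAPP}, the current $\Theta^{(j)}_i$ converges weakly to $\Theta^{(j)}$ as $i \to \infty$. Pairing this weak convergence with the uniform convergence of the continuous function $\max\{\varphi_{\xi_i}, -t\}$ on the support (which is contained in a fixed compact subset of $X_0$) gives the weak convergence of each summand, and summing concludes.

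The main obstacle is the fact that the polarization itself is varying with $\xi_i$, so that the reference potentials and therefore the natural classes of $\xi_i$-psh functions shift with $i$. The trick is to note that every object of interest has an explicit FS description whose sole $\xi$-dependence enters through the continuously varying pairings $\langle \xi, \alpha\rangle$ over a finite index set $A$; this turns the problem into one of uniform convergence of a finite family of explicit maxima plus the already available continuity of mixed Monge--Amp\`ere, at which point the rest of the argument is routine multilinearity.
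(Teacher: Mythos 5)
The paper's own proof is a one-line citation: the statement is exactly \cite[Lemma 2.11]{NAPP}, which is a continuity-in-$\xi$ result for these mixed measures. Your proposal tries to rederive that content, and while the overall strategy (everything is an explicit FS maximum whose only $\xi$-dependence is through the finitely many pairings $\langle\xi,\alpha\rangle$) is the right one, the key step is not actually justified. The continuity of the mixed Monge--Amp\`ere operator established in \cite{wu, NAPP} is continuity \emph{within a fixed class} $\cpsh(\xi)$ (or $\cH(\xi)$) for a fixed Reeb field; here $\varphi^i$ lies in $\cH(\xi_i)$ with $\xi_i$ varying, so $\Theta^{(j)}_i$ and $\Theta^{(j)}$ are Monge--Amp\`ere products taken with respect to \emph{different} polarizations, and the fixed-$\xi$ continuity theorem simply does not apply to the sequence $\Theta^{(j)}_i$. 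You correctly identify this as ``the main obstacle,'' but the resolution you offer --- uniform convergence of the FS expressions plus ``routine multilinearity'' --- does not close it: uniform convergence of potentials with a varying reference does not by itself yield weak convergence of (mixed) Monge--Amp\`ere measures, and multilinearity is an algebraic property of the mixed product, not a continuity statement in the coefficients of the defining piecewise-linear data.

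What is actually needed, and what \cite[Lemma 2.11]{NAPP} supplies (compare also Step~1 of the paper's proof of Lemma~\ref{Aconvergence}), is the explicit computation: pull everything back to a fixed $\TT$-equivariant log resolution determined by $\chi$ alone, tropicalize so that all the functions become maxima of affine functions with coefficients $\ell_j(\alpha)/\langle\xi_i,\alpha\rangle$, and observe that the resulting atomic measures have supports given by solutions of linear systems and masses given by determinants, both depending continuously on $\langle\xi_i,\alpha\rangle$. Without carrying out (or correctly citing) that computation, the convergence $\Theta^{(j)}_i\to\Theta^{(j)}$ is asserted rather than proved. A minor additional point: as the definition of the Monge--Amp\`ere operator indicates, the last factor in $\mu_t^i$ should be read as $d'd''\max\{\varphi_{\xi_i},-t\}$ (this is how it is used in the proof of Lemma~\ref{MeasureTranslation}), so it cannot be split off as a continuous multiplier paired against a weakly convergent current; it is itself one of the factors subject to the same varying-polarization issue.
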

\begin{proof}
    This is an immediate consequence of~\cite[Lemma 2.11]{NAPP}.
\end{proof}

\begin{lem}\label{MeasureTranslation}
    For fixed $\xi$, $\mu_t =(t\xi)_*\mu_0$, where 
    $t\xi : X^{\an}\to X^{\an}$ is given by $ v\mapsto (t\xi)*v$.
\end{lem}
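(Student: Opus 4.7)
The plan is to reduce the identity to a direct transformation law for FS-functions under the $\xi$-flow, combined with the multilinearity of pullback for mixed NA currents. Write $T_t \colon X^{\an} \to X^{\an}$ for the homeomorphism $v \mapsto (t\xi)*v$; its inverse is $T_{-t}$, so that $(T_t)_* S = T_{-t}^* S$ for any NA current $S$ on $X^{\an}$.

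First, I would verify the transformation law
\[
\fs(\xi,\chi) \circ T_t = \fs(\xi,\chi) - t
\]
for any $\bT$-invariant finite-type norm $\chi$. This is a direct calculation: since $((t\xi)*v)(f_\alpha) = v(f_\alpha) + t\la\xi,\alpha\ra$, we have $\log|f_j|((t\xi)*v) = \log|f_j|(v) - t\la\xi,\alpha_j\ra$, so each ratio $(\log|f_j|+\chi(f_j))/\la\xi,\alpha_j\ra$ in the FS formula shifts by $-t$, and so does their maximum. In particular the law applies both to $\varphi = \fs(\xi,\chi)$ and to the trivial potential $\varphi_\xi = \fs(\xi, 0)$.

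Next, I would pull each factor of $\mu_0$ back by $T_{-t}$. Since $T_{-t}^*\varphi = \varphi + t$ and $T_{-t}^*\varphi_\xi = \varphi_\xi + t$ and $d'd''$ kills constants, we get $T_{-t}^*(d'd''\varphi)^j = (d'd''\varphi)^j$ and $T_{-t}^*(d'd''\varphi_\xi)^{n-1-j} = (d'd''\varphi_\xi)^{n-1-j}$. For the last factor, $T_{-t}^*\max\{\varphi_\xi, 0\} = \max\{\varphi_\xi + t, 0\} = \max\{\varphi_\xi, -t\} + t$, and the additive constant $t$ is absorbed by the ambient $d'd''$ in the NA mixed-current interpretation of $\mu_t$ (consistent with the MA operator definition at $t=0$). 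Assembling factor-wise via multilinearity of pullback yields $(T_t)_*\mu_0 = T_{-t}^*\mu_0 = \mu_t$.

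The main technical point is that pullback along the homeomorphism $T_t$ commutes with the wedge product of the mixed NA currents. This is a standard property of the Chambert-Loir--Ducros calculus of forms and currents on Berkovich spaces and of the $\bT$-equivariant extension of the Monge--Amp\`ere operator developed in \cite{wu, NAPP}; since $T_t$ is induced by the torus action and each constituent current is $\bT$-equivariant up to the constant shift verified above, the factor-wise pullback is automatic, and once it is invoked the identity is purely formal.
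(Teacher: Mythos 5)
Your starting point is exactly the mechanism the paper uses: the transformation law $\fs(\xi,\chi)\circ T_t=\fs(\xi,\chi)-t$ (and likewise for $\varphi_\xi$) is correct with the stated sign conventions, and it is precisely this $-t$ shift that makes the lemma true. However, there is a genuine gap in the step you yourself flag as ``the main technical point.'' The map $T_t\colon v\mapsto (t\xi)*v$ is a homeomorphism of topological spaces coming from the torus action, but it is \emph{not} a morphism of Berkovich analytic spaces, so the Chambert-Loir--Ducros formalism gives you neither a pullback $T_{-t}^*$ of currents nor a pushforward of forms along it; moreover, wedge products of currents are not defined in general in that calculus (the measures $\mu_t$ are defined through psh-approachability/Bedford--Taylor-type limits, not as honest products of currents). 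Consequently the identity $(T_t)_*S=T_{-t}^*S$ and the claim that ``factor-wise pullback is automatic'' are not available as stated; asserting that this is ``a standard property'' of \cite{wu, NAPP} is where the argument stops being a proof.

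The paper closes exactly this gap by descending to tropical charts: on a fine cell of a convenient cell decomposition, $\varphi$, $\varphi_\xi$ and $\max\{\varphi_\xi,-t\}$ are written as (limits of) pullbacks under $f_{\trop}$ of functions on a polyhedron, and the map $T_t$ becomes the affine translation $l\colon x_j\mapsto x_j-t\la\xi,\alpha_j\ra$. There, the constituent objects are Lagerberg superforms and currents on $\RR^n$, for which pushforward under the affine map $l$, its compatibility with $d'd''$, and the multilinearity you want are all legitimate and are verified by the explicit integration-by-parts computation against test $(n-1,n-1)$-forms. So your proposal would become a proof if you replaced the appeal to ``standard functoriality of CLD currents under $T_t$'' by this reduction to cells and Lagerberg's calculus (or by an equivalent argument exploiting that $\mu_0$ and $\mu_t$ are finite sums of Dirac masses at $\TT$-invariant quasi-monomial valuations with explicitly matching weights). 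As written, the key step is asserted rather than established.
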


\begin{proof}
    This is done by repeating the proof in~\cite[Corollary 5.7]{wu}. Indeed, the same proof shows $\mu_t$ is well defined for all $t\in \RR_{\geq 0}$, and is a finite sum of Dirac masses at $\TT$-invariant valuations. Restrict to a fine enough cell in a convenient cell decomposition and assume $0<t\ll 1$. Write $\varphi$ as pullback of smooth approximations $\varphi_\varepsilon$ using regularized max, see~\cite[Proof of Proposition 5.5]{wu}, and $\max\{\varphi_\xi, -t\}$ as the pullback of a function of the form $\max\{-\frac{x_j}{\langle \xi,\alpha_j\rangle}, -t\}$. A linear change of variable $l: x_j\mapsto x_j' = x_j-t\langle\xi,\alpha_j\rangle$ on the cell gives 
    $$\varphi +t = \lim_{\varepsilon\to 0} f_{\trop}^*(\varphi_\varepsilon\circ l) $$ and
    $$\max\{\varphi_\xi, -t\} = f_{\trop}^* \max\{-\frac{x_j'}{\langle \xi,\alpha_j\rangle}, 0\}.$$

    Note that $d'd''\varphi = \lim_{\varepsilon\to 0}f_{\trop}^*(d'd''\varphi_\varepsilon) = \lim_{\varepsilon\to 0}f_{\trop}^*(d'd''(\varphi_\varepsilon\circ l))$, it now remains to show that 
    \[d'd''\max\{-\frac{x_j'}{\langle \xi,\alpha_j\rangle}, 0\} = l_* d'd''\max\{-\frac{x_j}{\langle \xi,\alpha_j\rangle}, 0\}\]
    as a positive $(1,1)$-current on a cell. This is a consequence of~\cite{lagerberg}. Indeed, for any compactly supported smooth $(n-1, n-1)$-form $\omega$ on the cell, we have
    \begin{align*}
    \langle d'd''\max\{-\frac{x_j'}{\langle \xi,\alpha_j\rangle}, 0\}, \omega\rangle &= \int \max\{-\frac{x_j'}{\langle \xi,\alpha_j\rangle}, 0\} d'd''\omega  \\
    &= \int l^*(\max\{-\frac{x_j}{\langle \xi,\alpha_j\rangle}, 0\} )d'd''(l^*\omega) \\
    &= \langle l_*d'd''\max\{-\frac{x_j}{\langle \xi,\alpha_j\rangle}, 0\}, \omega\rangle.
    \end{align*}
\end{proof}

The following lemma shows that integrating the log discrepancy against these MA measures behaves well under taking limits, even though the log discrepancy function is not continuous. 
\begin{lem}\label{Aconvergence}
    Notation as before. For a fixed finite type norm $\chi$, we have 
    \[\int_{X^{\an}\setminus\{o\}} A(v) \mu^i_t\to \int_{X^{\an}\setminus\{o\}} A(v) \mu_t\]
    as $i\to \infty$.
\end{lem}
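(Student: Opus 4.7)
The difficulty is that the log discrepancy $A = A_{(X,B)}$ is only lower semicontinuous on $X^{\an}$, so the weak convergence $\mu_t^i \to \mu_t$ from Lemma \ref{ctsMAsupp} does not by itself give convergence of the integrals. My plan is to exhibit a compact subset $K$ of a fixed dual complex that contains the supports of $\mu_t$ and of $\mu_t^i$ for all $i\gg 0$, and on which $A$ is continuous; the result will then follow from Lemma \ref{ctsMAsupp} by testing weak convergence against a continuous function that agrees with $A$ on $K$.

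First I would choose a single log smooth model $\pi \colon (Y,D) \to X$ that simultaneously resolves $B$ and the ideal generated by the finite collection of monomials $\{f_j\}$ entering the expression $\fs(\xi,\chi) = \max_j\{(\log|f_j|+\chi(f_j))/\langle \xi,\alpha_j\rangle\}$. Since $\{f_j\}$ depends only on the fixed norm $\chi$ and not on the Reeb field, the same model $Y$ serves for every $\xi_i$. Let $\Delta \subset X_0^{\TT}$ be the resulting dual cone complex. On $\Delta$, each FS function $\varphi^i$, $\varphi$ and each reference potential $\varphi_{\xi_i}$, $\varphi_\xi$ is piecewise affine with the same combinatorial structure and with coefficients depending continuously on $\xi_i$. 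Moreover, for a quasi-monomial $v\in \Delta$ with barycentric coordinates $(a_i)$ we have $A(v) = \sum_i a_i A(E_i)$, so $A|_\Delta$ is piecewise affine and in particular continuous.

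The main technical step is then to verify that the supports of $\mu_t^i$ for large $i$ and of $\mu_t$ are all contained in a fixed compact subset $K\subset \Delta$. This should follow from the explicit tropical description of the mixed Monge--Amp\`ere measures used in the proof of \cite[Proposition 5.5]{wu} and Lemma \ref{MeasureTranslation}: each $\mu_t^i$ is a finite sum of Dirac masses at vertices of a polyhedral refinement of $\Delta$, and as $\xi_i\to \xi$ these vertices vary continuously and stay in a bounded region of $\Delta$, so no atom can escape to the locus where $A = +\infty$. One could also organize this step using Lemmas \ref{MeasureTranslation} and \ref{logdiscreptranslation} to reduce to the $t=0$ case via
\[\int A(v)\, d\mu_t^i(v) = t\,A(\xi_i)\,\mu_0^i(X^{\an}) + \int A(v)\, d\mu_0^i(v),\]
and similarly for $\mu_t$; since $A(\xi_i) \to A(\xi)$ by continuity of $A$ on the Reeb cone and $\mu_0^i(X^{\an})\to \mu_0(X^{\an})$ by Lemma \ref{ctsMAsupp}, only the $t=0$ integral remains to be analyzed, but this still requires the same compact-support input.

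Once $K$ is identified, the conclusion is immediate: Lemma \ref{ctsMAsupp} gives $\mu_t^i \to \mu_t$ weakly, and testing against a continuous compactly supported extension of $A|_K$ yields
\[\int_{X^{\an}\setminus\{o\}} A(v)\, d\mu_t^i = \int_K A|_K\, d\mu_t^i \longrightarrow \int_K A|_K\, d\mu_t = \int_{X^{\an}\setminus\{o\}} A(v)\, d\mu_t.\]
The hard part will be the uniform compactness step: the PL combinatorics of $\varphi^i$ for generic $\xi_i$ can differ from that of the limit $\varphi$ (new branches of the max may appear or disappear), and one must carefully bookkeep the tropicalization to ensure that the resulting atoms of $\mu_t^i$ remain in a fixed bounded portion of $\Delta$ and do not accumulate near the cone point or along the locus of infinite log discrepancy.
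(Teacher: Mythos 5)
Your proposal is correct and follows essentially the same route as the paper: fix one log smooth model $(Y,D)$ resolving the ideal generated by the defining functions of $\chi$, show the atoms of all the measures lie in a fixed compact subset $K$ of $\mathrm{QM}(Y,D)$ by observing that each atom solves a linear system whose coefficients vary continuously in $\xi$, use continuity of $A$ on $\mathrm{QM}(Y,D)$, and conclude from the weak convergence of Lemma~\ref{ctsMAsupp}. The only cosmetic difference is that the paper organizes the argument by pulling the measures back to $Y^{\an}$ and pushing forward, and cites \cite{JM12} for continuity of $A$ on the dual complex rather than the piecewise-affine formula you invoke.
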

\begin{proof}
    Let $\{f_\alpha\}_{\alpha\in A}$ be a collection of $\TT$-invariant functions generating the norm $\chi$ and an $\fm$-primary ideal. Take a $\TT$-equivariant log smooth pair $\pi: (Y, D)\to X$ resolving the ideal
    $(f_\alpha, \alpha \in A)$ such that in local algebraic coordinates $\pi^*f_\alpha$ can be written as $\pi^*f_\alpha = u_\alpha\prod_{j=1}^r z_j^{\ell_j(\alpha)}$ for some unit $u_\alpha$.
    Write $W$ to be the set of valuations with center on $D$. 
    Note that $\pi$ induces a morphism of the corresponding analytic spaces $\pi^{\an}: Y^{\an}\to X^{\an} $ which is an isomorphism outside $(\pi^{an})^{-1}(o)$. Thus $(\pi^{\an})^*\varphi$ defines a continuous psh function on $Y^{\an}\setminus(\pi^{an})^{-1}(o)$, and thus we get well-defined measures
    $\nu_t^i:=\mu_t((\pi^{\an})^*\varphi, \xi_i)$ and
    $\nu_t:= \mu_t((\pi^{\an})^*\varphi, \xi)$ on
    $Y^{\an}\setminus(\pi^{an})^{-1}(0)$. 
    We will prove the lemma in three steps:
    \begin{enumerate}
        \item Step 1: The support of measures $\nu_t^i$ and $\nu_t$ vary within a compact subset of $\mathrm{QM}(Y,D)$.

        By construction, the measures $\nu_t^i, \nu_t$ have finite support in $W$, viewed as a subset of $Y^{\an}$. Thus on $W$, we can write the measures
        $\nu_t = \nu_t((\pi^{\an})^*\varphi, \xi)$ where  $(\pi^{\an})^*\varphi$ takes the form:
        \[(\pi^{\an})^*\varphi = \max\{\frac{\log|f_\alpha|+\chi(f_\alpha)}{\langle \xi,\alpha\rangle}\} = \max\{\frac{\sum_{j=1}^r\ell_j(\alpha)\log|z_j|+\chi(f_\alpha)}{\langle \xi,\alpha\rangle}\}\]
        Now using the coordinates $\{z_j\}$ as local moment maps on $W\subset Y^{\an}$, we have that each $x\in \mathrm{supp}(\nu_t)$ lies in the $n$-dimensional faces of the characteristic polyhedron $\Sigma_g^{(n)}$ for some moment map $g = (z_{j_1}, \cdots, z_{j_p}) $,  $1\leq j_1< \cdots j_p\leq r$, defined around $x$, see~\cite[\S (2.3.4)]{CLD12}. By definition, we have $\Sigma_g^{(n)}\subset \mathrm{QM}(Y,D)$.
        Note that the chosen model $(Y, D)$ only depends on the norm $\chi$, the same argument shows that the support of measures $\nu_t^i$ are also contained in $\mathrm{QM}(Y,D).$

        Next we claim that there is some compact subset $K\subset  \mathrm{QM}(Y, D)$ containing the support of all measures  $\nu_t^i, \nu_t$ for $i\gg 0$. Indeed, if $x$ is in the support of any of the measures $\nu_t^i$, then as shown in~\cite[Corollary 5.7]{wu}(see also~\cite[Proposition 5.7.4]{CLD12}), after tropicalizing, $x$ corresponds to the unique solution of a system of linear equations $A_ix=b_i$, where the entries in $A_i, b_i$ depend continuously on $\xi$. Thus $\|x\|\leq \|A_i^{-1}\|\|b_i\|$ is bounded for $i\gg 0$. We can then choose $K$ to contain the support of all measures.

        \item Step 2: $\pi^{\an}_*\nu_t = \mu_t$ and $\pi^{\an}_*\nu_t^i = \mu_t^i$.
        After approximating the $\max$ function by smooth modifications (see~\cite[Proposition 6.3.2]{CLD12},~\cite[Lemme 5.18]{Demailly}) and proceed as in~\cite{wu}, we can without loss of generality assume $\varphi$ is smooth, and $\mu_t, \nu_t$ are $(n,n)$-forms. Then we must show
        \[\pi^{\an}_*[(\pi^{\an})^*\mu_t] = [\mu_t], \]
        where in the above expression, $\mu_t$ denotes the $(n,n)$-form, and $[\mu_t]$ denotes the current given by the $(n,n)$-form. Indeed, for any compactly supported smooth function $u$ on $X^{\an}\setminus\{o\}$, we have 
        \begin{align*}
            \langle \pi^{\an}_*[(\pi^{\an})^*\mu_t], u\rangle = \int_{Y^{\an}\setminus (\pi^{\an})^{-1}(0)} (\pi^{\an})^*u(\pi^{\an})^*\mu_t = 
            \int_{X^{\an}\setminus\{0\}} u \mu_t.
        \end{align*}
        Hence we have the first assertion. The assertion for $\mu_t^i, \nu_t^i$ follows from the very same argument.
        
        \item Step 3: Combining the previous two steps, and Proposition~\ref{prop-current-supp}, we have that
        \[\supp(\mu_t), \supp(\mu_t^i) \subset K\subset \mathrm{QM}(Y,D).\]
        Note that the log discrepancy function is continous on $\mathrm{QM}(Y, D)$ by~\cite{JM12}. We therefore have 
        \begin{align*}
            \int_{X^{\an}\setminus \{0\}} A(v)\mu_t^i &= \int_{Y^{\an}\setminus (\pi^{\an})^{-1}(0)} A\circ \pi^{\an}(v)\nu_t^i \\
            &=\int_{K}A\circ \pi^{\an}(v)\nu_t^i  
            \to \int_{K}A\circ \pi^{\an}(v)\nu_t\\
            &= \int_{Y^{\an}\setminus (\pi^{\an})^{-1}(o)} A\circ \pi^{\an}(v)\nu_t\\
            &= \int_{X^{\an}\setminus \{o\}} A(v)\mu_t.
        \end{align*}
    \end{enumerate}
\end{proof}

\section{Local K-stability}\label{section: localstab}
We now provide a careful comparison between K-stability of log Fano pairs and K-stability of log Fano cone singularities. Most of the results in this section should be known to experts, and can be found in~\cite{MSY, CSIrregular,CS19, Livolmin, LiIzumi, LX18, LWX, Li21}.
\subsection{K-stability of log Fano pairs}
In this section, we largely follow~\cite{BHJ17, Xubook} to give a quick overview of K-stability of log Fano pairs. Throughout, let $(V, B_V=\sum_i c_i B_i)$ be a log Fano pair of dimension $n$, i.e. a projective klt pair with $L\coloneqq -(K_V+B_V)$ ample. Assume there is a torus $\TT$ acting on the log Fano pair.
\begin{defn}
    A \emph{($\TT$-equivariant ample) test configuration} for $(V, B_B; L)$ is a triple $(\cV, \cB_\cV; \cL)$ with $\cV$ normal, and a flat proper morphism $\pi: \cV \to \AA^1$ such that
    \begin{enumerate}
        \item there is a $\GG_m$-action on $\cV$ lifting the canonical action on $\AA^1$;
        \item $\TT$ acts on $\cV$ fiberwise which agrees with the $\TT$ action on $V$. Further, the action commutes with the $\GG_m$-action, and there is a $\TT\times \GG_m$-equivariant isomorphism $\phi: \cV\times_{\AA^1} (\AA^1\setminus\{0\})\cong V\times (\AA^1\setminus\{0\})$;
        \item $\cB_\cV = \sum_i c_i \overline{\GG_m\cdot \phi_1^{-1}(B_i)}$, where $\phi_1: \cV_1\cong V$ is the isomorphism over $1\in \AA^1$, and the closure is taken in $\cV$.
        \item $\cL$ is an ample $\GG_m$-linearized $\QQ$-line bundle such that $\phi$ extends to an isomorphism $\phi: (\cV_1, \cL_1)\cong (V, L)$.
    \end{enumerate}
\end{defn}
The above definition gives the most general class of normal test configurations in K-stability theory. 
There are other different classes of test configurations, and we list them below.
\begin{defn} \label{defn-global-tc}
A test configuration $(\cV, \cB_\cV; \cL)$ is 
\begin{enumerate}
    \item a \emph{$\QQ$-Gorenstein test configuration} if $\cL\sim_\QQ -K_{(\cV, \cB_\cV)}$;
    \item a \emph{weakly special test configuration} if $\cV_0$ is integral, $(\cV, \cV_0+\cB_\cV)$ is lc and $\cL\sim_\QQ -K_{(\cV, \cB_\cV)}$;
    \item a \emph{special test configuration} if $(\cV, \cV_0+\cB_\cV)$ is plt and $\cL\sim_\QQ -K_{(\cV, \cB_\cV)}$;
    \item a \emph{product test configuration} if there exists an $\GG_m$-equivariant isomorphism $(\cV, \cB_\cV) \cong (V, B_V)\times \AA^1$ over $\AA^1$ where the $\GG_m$-action on $\cV$ is given by a $\GG_m$-action on $V$ and the usual $\GG_m$-action on $\AA^1$.
\end{enumerate}
\end{defn}

\begin{remark}
    We remark that there are slightly different definitions for weakly special test configurations in the literature. For example, in~\cite{BLX}, non-integral central fibers are allowed in weakly special test configurations. We need an integral central fiber to draw comparison to the local case later. 
\end{remark}

Given a normal polarized scheme $(V, L)$ of dimension $n$ with a $\GG_m$-action, there is a weight decomposition on $H^0(V, mL)=\bigoplus_{\lambda\in \ZZ} H^0(V, mL)_\lambda$.
Define the total weight as $w_m(V, L)\coloneqq \sum_{\lambda\in \ZZ} \lambda h^0(V, mL)_\lambda$. We will also write $N_m(V, L)\coloneqq h^0(V, mL)$. Via the equivariant Riemann-Roch and asymptotic Riemann-Roch theorems, the above two invariants admit the following asymptotic expansions for $m\gg 0$:
\begin{align*}
    N_m(V, L) = a_0(V, L) m^n + a_1(V, L) m^{n-1} + O(m^{n-2});\\
    w_m(V, L) = b_0(V, L) m^{n+1} + b_1(V, L) m^n + O(m^{n-1}),
\end{align*}
where 
\[a_0(V, L) = \frac{(L^n)}{n!}, a_1 = -\frac{(K_V\cdot L^{n-1})}{2(n-1)!}.\]
Thus there is an asymptotic expansion:
\[\frac{w_m(V, L)}{mN_m(V, L)} = F_0(V, L) + F_1(V,L)m^{-1}+\cdots, \]
where $F_0 = \frac{b_0(V,L)}{a_0(V,L)}$, and $F_1 = \frac{a_0(V, L)b_1(V, L)-a_1(V, L)b_0(V, L)}{a_0(V, L)^2}$.

\begin{defn}\label{defn-global-fut}
    Let $(\cV, \cB_\cV = \sum_i c_i \cB_i; \cL)$ be a test configuration for $(V, B_V; L)$. Let $(\cV_0, \cB_0 = \sum_i c_i\cB_{i,0})$ be the central fiber, and write $\cL_0, \cL_{i,0}, $ for $\cL$ restricted to the central fiber and to the corresponding divisor $\cB_{i,0}$.
    The \emph{Futaki invariant} $\fut(\cV, \cB_\cV; \cL)$ of the test configuration is 
    \[\fut(\cV, \cB_\cV; \cL)\coloneqq -2F_1(\cV_0,\cL_0)+ \frac{\sum_ic_i(a_0(\cV_0, \cL_0)b_0(\cB_{i,0}, \cL_{i,0}) - a_0(\cB_{i,0}, \cL_{i,0})b_0(\cV_0, \cL_0))}{a_0(\cV_0, \cL_{0})^2}.\]
\end{defn}

\begin{thm}[{\cite[Proposition 2.17]{Xubook}, see also~\cite{Wang, Odaka, BHJ17}}]\label{thm-global-intersection}
    The Futaki invariant of a normal test configuration has the following intersection-theoretic formula:
    \[\fut(\cV, \cB_\cV; \cL) = \frac{(K_{(\bar\cV, \bar \cB_\cV)/\PP^1}\cdot \bar \cL^n)}{(L^n)}+ \frac{\bar S_{B_V} (\bar \cL^{n+1})}{(n+1)(L^n)},\]
    where $(\bar \cV, \bar \cB_\cV; \bar \cL)$ is the compactification of the test configuraiton over $\PP^1$, and \[\bar S_{B_V} = \frac{n(-K_{(V, B_V)}\cdot L^{n-1})}{(L^n)}.\]
\end{thm}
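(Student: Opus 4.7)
The plan is to reduce the identity to a computation of $\chi(\bar\cV, m\bar\cL)$ (and $\chi(\bar\cB_i, m\bar\cL|_{\bar\cB_i})$) in two different ways, then match the two resulting polynomial expansions in $m$ coefficient by coefficient to turn $a_0, a_1, b_0, b_1$ into intersection numbers on $\bar\cV$ and on the horizontal boundary divisors. First I would compactify $(\cV, \cB_\cV; \cL)$ to $(\bar\cV, \bar\cB_\cV; \bar\cL)\to \PP^1$ by gluing in $(V, B_V; L)\times (\PP^1\setminus\{0\})$ via $\phi$. The key geometric feature is that $\GG_m$ acts trivially on $\bar\cV_\infty\cong V$, so all of the weight information is concentrated at the fiber over $0\in \PP^1$.

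Next I would carry out the following two computations. On the one hand, Hirzebruch--Riemann--Roch on $\bar\cV$ yields
\[
\chi(\bar\cV, m\bar\cL) = \frac{(\bar\cL^{n+1})}{(n+1)!}\,m^{n+1} + \frac{(-K_{\bar\cV}\cdot \bar\cL^n)}{2\cdot n!}\,m^n + O(m^{n-1}).
\]
On the other hand, pushing forward along $\bar\pi: \bar\cV\to \PP^1$ gives $\chi(\bar\cV, m\bar\cL) = \chi(\PP^1, \bar\pi_*(m\bar\cL))$ for $m\gg 0$; because $\bar\pi_*(m\bar\cL)$ is a $\GG_m$-equivariant vector bundle on $\PP^1$ whose fiber at $0$ is $H^0(\cV_0, m\cL_0)$ with weight decomposition giving weights summing to $w_m(\cV_0,\cL_0)$, while its fiber at $\infty$ is $H^0(V, mL)$ with trivial action, the standard splitting of equivariant bundles on $\PP^1$ shows that $\chi(\PP^1, \bar\pi_*(m\bar\cL)) = N_m(V, L) + w_m(\cV_0,\cL_0)$ (up to the standard sign). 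Matching the expansions in $m$ identifies $b_0 = (\bar\cL^{n+1})/(n+1)!$ and, after using $K_{\bar\cV} = K_{\bar\cV/\PP^1} + \bar\pi^*K_{\PP^1}$ together with $K_{\PP^1}\sim -2[\mathrm{pt}]$ and $(\bar\cL^n\cdot \text{fiber})=(L^n)$, identifies $b_1$ in terms of $(K_{\bar\cV/\PP^1}\cdot \bar\cL^n)$ and $(L^n)$. An entirely parallel argument applied to each horizontal divisor $\bar\cB_i\to \PP^1$ expresses $a_0(\cB_{i,0}, \cL_{i,0})$ and $b_0(\cB_{i,0}, \cL_{i,0})$ as intersection numbers $(\bar\cL|_{\bar\cB_i})^{n-1}/(n-1)!$ and $(\bar\cL|_{\bar\cB_i})^n/n!$ respectively.

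Finally I would substitute these intersection-theoretic formulas into the definition $\fut = -2F_1 + \text{(boundary)}$ of Definition~\ref{defn-global-fut}. The $-2F_1$ part produces the two terms $\frac{(K_{\bar\cV/\PP^1}\cdot \bar\cL^n)}{(L^n)}$ and $\frac{n(-K_V\cdot L^{n-1})}{(L^n)}\cdot \frac{(\bar\cL^{n+1})}{(n+1)(L^n)}$, while the boundary sum $\sum_i c_i(\cdots)/a_0^2$ contributes $\frac{(\bar\cB_\cV\cdot \bar\cL^n)}{(L^n)}$ together with a correction of the form $\frac{n(B_V\cdot L^{n-1})(\bar\cL^{n+1})}{(n+1)(L^n)^2}$; the first correction combines with $(K_{\bar\cV/\PP^1}\cdot \bar\cL^n)$ into $(K_{(\bar\cV,\bar\cB_\cV)/\PP^1}\cdot \bar\cL^n)$, and the $(\bar\cL^{n+1})$-corrections combine into $\bar S_{B_V}(\bar\cL^{n+1})/((n+1)(L^n))$. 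The main obstacle is the bookkeeping in this last assembly step: one must track sign conventions in relating $\GG_m$-weights to degrees of $\bar\pi_*$, carefully convert between absolute and relative canonical classes on both $\bar\cV$ and each $\bar\cB_i$, and verify that the boundary weight contributions combine with $K_{\bar\cV/\PP^1}$ precisely into $K_{(\bar\cV,\bar\cB_\cV)/\PP^1}$ as required.
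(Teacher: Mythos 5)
Your proposal is correct and follows essentially the same route as the paper: the paper's proof simply quotes the expansions $a_0,a_1,b_0,b_1$ as consequences of (equivariant/asymptotic) Riemann--Roch from \cite{BHJ17} and substitutes them into Definition~\ref{defn-global-fut}, which is exactly your final assembly step. The only difference is that you additionally sketch the derivation of those coefficient formulas (computing $\chi(\bar\cV,m\bar\cL)$ both by Hirzebruch--Riemann--Roch and via $\deg\bar\pi_*(m\bar\cL)=w_m$), i.e.\ you fill in the content of the cited reference rather than citing it.
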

\begin{proof}
    This is obtained by combining the following formulas, each of which is a consequence of equivariant Riemann-Roch or asymptotic Riemann-Roch, see~\cite{BHJ17}:
    \begin{enumerate}
        \item $a_0(\cV_0, \cL_0) = \frac{(L^n)}{n!}, \  a_1(\cV_0, \cL_0) = -\frac{(-K_V\cdot L^{n-1})}{2(n-1)!}$;
        \item $b_0(\cV_0, \cL_0) = \frac{(\bar \cL^{n+1})}{(n+1)!},  \ b_1(\cV_0, \cL_0) = -\frac{1}{2n!} (K_{\bar \cV/\PP^1}\cdot \bar \cL^n)$;
        \item $a_0(\cB_{i,0}, \cL_{i,0}) = \frac{1}{(n-1)!} (B_i\cdot L^{n-1}), \  b_0(\cB_{i,0}, \cL_{i,0})=\frac{1}{n!} (\bar \cB_i\cdot \bar \cL^{n})$.
    \end{enumerate}
\end{proof}

\begin{defn}\label{defn-global-Kstab}
    A log Fano pair $(V, B_V; L = -K_{(V, B_V)})$ is 
    \begin{enumerate}
        \item \emph{K-semistable} if $\fut(\cV, \cB_\cV; \cL)\geq 0$ for all normal ample test configurations;
        \item \emph{K-polystable} if we further have $\fut(\cV, \cB_\cV; \cL)= 0$ only when $(\cV, \cB_\cV; \cL)$ is a product test configuration.
    \end{enumerate}
\end{defn}

\begin{remark}
    The K-stability notion we defined here is in fact what is called $\TT$-equivariant K-stability. We drop the phrase ``$\TT$-equivariant" in this paper for the following two reasons:
    first, we want our terminology to be compatible with the local K-stability story, where only $\TT$-equivariant test configurations are considered, and hence local K-stability is $\TT$-equivariant K-stability already; second,
    by combining the recent works of~\cite{ZhuEquiv, LZFiniteEquiv, LWX, Li19, ZhuangEquiv, XuZhuang20, LXZ}, $\TT$-equivariant (uniform, reduced uniform) K-stablity is equivalent to (uniform, reduced uniform) K-stability for log Fano pairs.
\end{remark}

By using the powerful theory of the Minimal Model Program, to test K-stability for log Fano pairs, we only have to test special test configurations. Here is the precise statement. 
\begin{thm}[\cite{LX14}]
    A log Fano pair $(V, B_V; L= -K_{(V, B_V)})$ is K-polystable if and only if $\fut(\cV, \cB_\cV; \cL)\geq 0$ for all special test configurations, and $\fut(\cV, \cB_\cV; \cL)= 0$ only when $(\cV, \cB_\cV; \cL)$ is a product test configuration.
\end{thm}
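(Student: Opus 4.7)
The plan is to follow the MMP-based strategy introduced by Li--Xu. The guiding principle is that starting from an arbitrary normal ample $\TT$-equivariant test configuration $(\cV, \cB_\cV; \cL)$, one can produce a sequence of modifications yielding test configurations of increasingly restrictive types (weakly special, then special) along which the (generalized) Futaki invariant is non-increasing. All MMP steps are performed $\TT \times \GG_m$-equivariantly, which is possible because the MMP can be run equivariantly with respect to an action of a reductive group. The intersection-theoretic formula from Theorem~\ref{thm-global-intersection} will be the main tool for comparing Futaki invariants across modifications.

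First, I would carry out a \emph{log canonical modification}. Take an equivariant log resolution $\tilde{\cV} \to \cV$ so that the total space is smooth and the central fiber has snc support, and then run a $(K_{(\tilde{\cV}, \tilde{\cB}_\cV + \tilde{\cV}_{0,\mathrm{red}})} + a\tilde{\cL})$-MMP with scaling over $\AA^1$ for $a \gg 0$. This produces a weakly special test configuration $(\cV^{\mathrm{lc}}, \cB^{\mathrm{lc}}; \cL^{\mathrm{lc}})$. Using the intersection formula and the negativity lemma, show that
\[\fut(\cV, \cB_\cV; \cL) \geq \fut(\cV^{\mathrm{lc}}, \cB^{\mathrm{lc}}; -K_{(\cV^{\mathrm{lc}}, \cB^{\mathrm{lc}})}),\]
with equality forcing $\cL \sim_\QQ -K_{(\cV, \cB_\cV + \cV_{0,\mathrm{red}})}$ and the resolution/MMP steps to be crepant. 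The key point is that the auxiliary line bundle used in the MMP scaling contributes nef differences.

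Next, I would \emph{reduce to a special test configuration}. Starting from the weakly special $(\cV^{\mathrm{lc}}, \cB^{\mathrm{lc}}; -K)$ whose central fiber is reduced and lc, extract a single $\TT$-equivariant divisor $E$ over $\cV^{\mathrm{lc}}$ with center in the central fiber and computing the minimal log discrepancy of $(\cV^{\mathrm{lc}}, \cB^{\mathrm{lc}} + \cV^{\mathrm{lc}}_0)$. Taking an equivariant plt blowup extracting $E$ and then running an MMP to contract the remaining components of the central fiber gives a special test configuration $(\cV^{\mathrm{sp}}, \cB^{\mathrm{sp}}; -K)$. By a second application of the intersection formula, the Futaki invariant is again non-increasing, with equality characterizing the steps as crepant contractions that are isomorphisms in codimension one.

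For the polystable case, assuming $\fut(\cV, \cB_\cV; \cL) = 0$ and that every special test configuration of $(V, B_V; L)$ with vanishing Futaki invariant is a product, the equality analysis above forces $(\cV, \cB_\cV; \cL)$ to be $\TT$-equivariantly isomorphic, over $\AA^1$, to the product structure recovered at the end of the MMP, via uniqueness of lc models. The \textbf{main obstacle} is precisely this equality analysis in the polystable statement: one must carefully track when each MMP step is an isomorphism in codimension one and use uniqueness of canonical/lc models to reconstruct a product structure on the original $(\cV, \cB_\cV)$. A subsidiary difficulty is ensuring $\TT$-equivariance is preserved at each MMP step, which requires an equivariant version of the MMP with scaling for a reductive group action.
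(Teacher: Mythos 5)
The paper does not prove this statement; it is quoted directly from \cite{LX14}, so your proposal can only be measured against the Li--Xu argument itself. Your outline does follow that strategy, but as written it has two concrete gaps. First, you skip the equivariant semistable reduction: before the lc modification can produce a weakly special test configuration (whose central fiber must be reduced --- indeed integral in this paper's convention), one must perform a finite base change $z\mapsto z^d$ on $\AA^1$ followed by normalization. This step multiplies the Futaki invariant by $d$, so the chain of inequalities is really $d\cdot\fut(\cV,\cB_\cV;\cL)\geq\fut(\cV^{\mathrm{lc}},\cdot)\geq\fut(\cV^{\mathrm{sp}},\cdot)$; harmless for the sign, but it must be tracked and it enters the equality analysis. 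Relatedly, you conflate two distinct MMP stages: after the lc modification one still carries an arbitrary polarization $\cL^{\mathrm{lc}}$, and the passage to $\cL\sim_{\QQ}-K_{(\cV,\cB_\cV)}$ is achieved by an MMP with scaling along a path of polarizations $\cL_\lambda$, with the monotonicity $\tfrac{d}{d\lambda}\fut(\cV_\lambda,\cB_\lambda;\cL_\lambda)\leq 0$ verified term by term from the intersection formula of Theorem~\ref{thm-global-intersection}; it is not a single application of the negativity lemma to one auxiliary nef class.

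Second, and more seriously, the equality analysis that you yourself flag as the main obstacle is precisely the content of the polystable statement, and ``uniqueness of lc models'' does not by itself close it. When $\fut=0$, each step is indeed forced to be crepant and an isomorphism in codimension one, but one must then descend the product structure on $\cV^{\mathrm{sp}}$ back through the MMP steps \emph{and} the base change to the original $(\cV,\cB_\cV;\cL)$, and account for the fact that the recovered product may differ from the expected one by a twist by a one-parameter subgroup of $\TT$ acting on $(V,B_V)$. This is the delicate point of the argument (and the reason the polystable equality case was revisited and completed in later work such as \cite{LWX}); a proof has to carry it out rather than defer it, so as it stands your proposal is an accurate plan but not yet a proof of the backward implication.
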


\subsection{K-stability of log Fano cone singularities}
In this section, we summarize the parallel K-stability notion for log Fano cone singularities. This was developed in~\cite{CSIrregular}, and we will adapt it to the case of pairs. 

\begin{defn}
	Let $(X, B = \sum_i c_i B_i, \TT; \xi)$ be a log Fano cone singularity. A normal test configuration is the data $(\cX, \cB, \TT, \xi, \eta)$ with a flat projection $\pi: \cX\to \AA^1$ such that
	\begin{enumerate}
		\item $\cX = \Spec \cR$ is a normal affine variety;
		\item $\eta$ is a $\GG_m$ action on $\cX$ lifting the usual action on $\AA^1$;
		\item $\TT$ acts on $\cX$ fiberwise. Further, the action commutes with $\eta$, and coincides with the action on the first factor restricted to $\phi: \cX\times_{\AA^1} (\AA^1\setminus\{0\})\cong X\times (\AA^1\setminus\{0\})$;
        \item If $\cR=\bigoplus_\alpha\cR_\alpha$ is the weight decomposition with respect to $\TT$, then each $\cR_\alpha$ is a flat $\kk[t]$-module;
        \item The Reeb field $\xi$ is chosen fiberwise as given by the same vector in $\TT$, and $\cB = \sum_i c_i \overline{\GG_m\cdot \phi_1^{-1}(B_i)}$, where $\phi_1: \cX_1\cong X$ is the isomorphism over $1\in \AA^1$, and the closure is taken in $\cX$.
	\end{enumerate}
    We will again omit $\TT$ when the torus is clear from context.
\end{defn}

\begin{remark}
    While for most of the NA pluripotential theory, we will only need to work with polarized affine cones, and local K-stability is defined for polarized affine cones in general (see e.g.~\cite{CSIrregular}), we will need the log discrepancy for interesting applications. Thus, we shall restrict ourselves to log Fano cone singularities in this paper. 
\end{remark}

\begin{defn}[compare Definition~\ref{defn-global-tc}]\label{defn-local-tc}
    A test configuration $(\cX, \cB; \xi, \eta)$ is 
    \begin{enumerate}
        \item a \emph{$\QQ$-Gorenstein test configuration} if $K_\cX+\cB$ is $\QQ$-Cartier;
        \item a \emph{weakly special test configuration} if it is $\QQ$-Gorenstein and $(\cX,\cB+\cX_0)$ is lc;
        \item a \emph{special test configuration} if it is $\QQ$-Gorenstein and $(\cX, \cB+\cX_0)$ is plt;
        \item a \emph{product test configuration} if there is a $\TT$-equivariant isomorphism $(\cX, \cB) \cong (X, B)\times \AA^1$, and $\eta$ is of the form $\eta = (\eta',-1)\in N_\RR\oplus \RR$, where $\eta'\in N_\RR$.
    \end{enumerate}
\end{defn}

We need the following extra piece of notation later. Note that the test configuration equips the central fiber with a $\TT\times \GG_m$ action, and $\xi$ is in the Reeb cone $\ft_\RR^{+}$ of $\TT\times \GG_m$. 
Recall that the log discrepancy function on $\ft_\RR^{+}$ is linear, we are going to extend it to a linear function on $N_\RR\oplus \RR$ determined by the same vector in $M_\QQ$ and still denote it by $A$. 

We will now follow~\cite{CSIrregular} to define local K-stability. Let $(X=\Spec R; \xi)$ be a normal affine $\TT$-scheme of dimension $n$ with weight decomposition $R=\bigoplus_\alpha R_\alpha$, and $\xi \in \ft_\RR^+$. Let $\eta\in N_\RR$. We define the index character to be 
\[F(X; \xi, t)\coloneqq \sum_{\alpha} e^{-\la \xi, \alpha\ra t}\dim R_\alpha, \]
and the weight character to be 
\[C_\eta(X; \xi, t)\coloneqq \sum_\alpha e^{-t\la \xi, \alpha\ra}\la \eta, \alpha\ra \dim R_\alpha,\]
for $t\in \CC$ with $\mathrm{Re}(t)>0$.
\begin{thm}[{\cite[Theorem 3, 4]{CSIrregular}}]\label{chap3-thm-index-char}
    The index character and weight character admit meromorphic expansions to a small neighborhood of $0\in \CC$ of the following forms:
    \begin{align*}
        &F(X; \xi, t) = \frac{(n-1)! a_0(X; \xi)}{t^{n}} + \frac{(n-2)! a_1(X; \xi)}{t^{n-1}} +O(t^{-(n-2)}), \\
        &C_\eta(X; \xi, t) = \frac{n! b_0(X; \xi)}{t^{n+1}} + \frac{(n-1)!b_1(X;\xi)}{t^n} + O(t^{-(n-1)}).
    \end{align*}
    Moreover, $a_0(X;\xi), a_1(X; \xi), b_0(X; \xi), b_1(X; \xi)$ depend smoothly on $\xi\in \ft_\RR^+$, and 
    \[b_0(X;\xi) = \frac 1nD_{-\eta} a_0(X;\xi) \coloneqq \frac 1n \dfrac{d}{ds}\Big|_{s=0} a_0(X; \xi-s\eta).\]
\end{thm}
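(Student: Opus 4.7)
The plan is to derive the meromorphic expansion by first writing the index character as a generating function and then reducing, via the rational case, to asymptotic Riemann--Roch on a polarized projective pair. The relation $b_0 = \tfrac{1}{n}D_{-\eta}a_0$ will then fall out of a direct differentiation of $F$ under $\xi \mapsto \xi - s\eta$.

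\smallskip

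\emph{Reduction to the rational case.} First I would observe that the Reeb condition $\langle \xi,\alpha\rangle > 0$ for all nonzero $\alpha \in \Lambda$, together with the polynomial growth of $\dim R_\alpha$ as a function of $|\alpha|$, implies that $F(X;\xi,t)$ converges absolutely to a holomorphic function on $\{\mathrm{Re}(t)>0\}$ and depends real-analytically on $\xi$ in the Reeb cone. Because the coefficients of the asymptotic expansion are uniquely determined by the function, it suffices to establish the expansion first on the dense subset $\ft_\QQ^+ \subset \ft_\RR^+$ and then extend by continuity/analyticity in $\xi$.

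\smallskip

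\emph{The quasi-regular case.} For $\xi \in \ft_\QQ^+$ primitive, Proposition~\ref{prelim-prop-quotient-log-fano} gives a polarized log Fano pair $(V,B_V;L)$ with
\[R = \bigoplus_{k\geq 0} R_k^\xi, \qquad R_k^\xi = \bigoplus_{\langle\xi,\alpha\rangle=k} R_\alpha = H^0(V, \lfloor kL \rfloor).\]
Asymptotic Riemann--Roch on $V$ with polarization $L$ gives
\[\dim R_k^\xi = \frac{(L^{n-1})}{(n-1)!} k^{n-1} - \frac{(K_V\cdot L^{n-2})}{2(n-2)!} k^{n-2} + O(k^{n-3}),\]
so the expansion of $F(X;\xi,t)=\sum_k \dim R_k^\xi\, e^{-kt}$ around $t=0$ follows from the standard meromorphic expansion
\[\sum_{k\geq 0} k^j e^{-kt} = \frac{j!}{t^{j+1}} + \frac{j!\,B_1}{(j-1)!\, t^j} + \cdots,\]
obtained from the Bernoulli expansion of $(1-e^{-t})^{-1}$. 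Matching coefficients identifies $a_0(X;\xi)$ with the Martelli--Sparks--Yau volume and $a_1(X;\xi)$ with an explicit intersection number involving $K_V$. The same reasoning, applied to $C_\eta(X;\xi,t) = \sum_\alpha \langle\eta,\alpha\rangle \dim R_\alpha\, e^{-\langle\xi,\alpha\rangle t}$ after grading by $\langle\xi,\cdot\rangle$, yields the expansion for the weight character.

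\smallskip

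\emph{Smoothness in $\xi$ and the relation $b_0 = \tfrac{1}{n}D_{-\eta}a_0$.} For smoothness, I would use that $a_j(X;\xi)$, $b_j(X;\xi)$ are coefficients in the Laurent expansion of a function of $t$ that depends real-analytically on $\xi$, so the coefficients themselves depend real-analytically (in particular smoothly) on $\xi$, with values independently computable in the rational case and hence globally determined by their analytic extension. For the identity relating $b_0$ and $a_0$, differentiating $F(X;\xi-s\eta,t)$ in $s$ at $s=0$ under the sum gives
\[\frac{d}{ds}\bigg|_{s=0} F(X;\xi-s\eta,t) = t\sum_\alpha \langle\eta,\alpha\rangle e^{-\langle\xi,\alpha\rangle t}\dim R_\alpha = t\,C_\eta(X;\xi,t).\]
Plugging in the expansion of $F$ and interchanging differentiation with the meromorphic expansion (legitimate because the expansion is uniform for $\xi$ in a compact subset of $\ft_\RR^+$), the leading pole gives $t\,C_\eta(X;\xi,t) = (n-1)!\,D_{-\eta}a_0(X;\xi)/t^n + \cdots$, and comparing with $n!\,b_0(X;\xi)/t^{n+1}$ yields the claimed equality.

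\smallskip

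\emph{Main obstacle.} The delicate point is the irrational case: when $\xi\notin \ft_\QQ^+$ there is no quotient log Fano pair on which to run asymptotic Riemann--Roch, so one cannot directly extract $a_0, a_1, b_0, b_1$ from intersection numbers. Establishing the meromorphic expansion in $t$ uniformly in $\xi$ -- so that both the coefficients and the error estimates depend continuously (in fact smoothly) on $\xi$ -- is the crux of the argument. The cleanest route is to view the index character as the Laplace transform of the Duistermaat--Heckman-type measure $\mu_R = \sum_\alpha \dim R_\alpha\,\delta_\alpha$ on $M_\RR$, analyze its asymptotic behavior at infinity (which governs the pole structure at $t=0$), and check that the resulting coefficients match those computed in the rational case and extend analytically across the Reeb cone.
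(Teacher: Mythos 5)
The paper does not prove this statement; it is quoted verbatim from Collins--Sz\'ekelyhidi, so your proposal can only be judged on its own merits. On those merits there is a genuine gap, and it sits exactly where you yourself locate the ``main obstacle.'' Your reduction step --- establish the expansion for $\xi\in\ft_\QQ^+$ and ``extend by continuity/analyticity in $\xi$'' --- is not a valid argument: knowing that $F(X;\xi,t)$ is holomorphic on $\{\mathrm{Re}(t)>0\}$ and real-analytic in $\xi$, and that for a dense set of $\xi$ it happens to extend meromorphically across $t=0$, does not produce a meromorphic extension for irrational $\xi$, nor continuity (let alone smoothness) of the Laurent coefficients. Since the whole point of the theorem is the irregular case, the argument as written proves nothing beyond what asymptotic Riemann--Roch already gives. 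The standard repair, which is essentially what Collins--Sz\'ekelyhidi do, is to bypass the rational case entirely: by the multigraded Hilbert--Serre theorem the series $\sum_\alpha \dim R_\alpha\, s^\alpha$ is a rational function $P(s)/\prod_i(1-s^{\beta_i})$ with $\beta_i\in\Lambda$, and substituting $s=e^{-t\xi}$ exhibits $F(X;\xi,t)$ as $P(e^{-t\xi})/\prod_i\bigl(1-e^{-t\langle\xi,\beta_i\rangle}\bigr)$, which is manifestly meromorphic near $t=0$ for every $\xi$ in the Reeb cone (where all $\langle\xi,\beta_i\rangle>0$), with Laurent coefficients that are visibly real-analytic in $\xi$; this single observation delivers the extension, the smoothness, and the uniformity needed to justify differentiating under the expansion for the identity $b_0=\tfrac1n D_{-\eta}a_0$ (that final computation of yours, comparing $n!\,b_0/t^{n}$ with $(n-1)!\,D_{-\eta}a_0/t^{n}$, is correct once the uniformity is in place).

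A secondary error: in the quasi-regular case the quotient $V$ carries an orbifold structure and $L$ is only a $\QQ$-divisor, so $\dim R_k^\xi=h^0(V,\lfloor kL\rfloor)$ does \emph{not} have a clean two-term expansion with second coefficient $-(K_V\cdot L^{n-2})/2(n-2)!$; the $k^{n-2}$ coefficient has periodic corrections from the fractional parts, and after summing against $e^{-kt}$ the $t^{-(n-1)}$ pole is governed by the orbifold canonical divisor $K_V^{\mathrm{orb}}$ rather than $K_V$ (the paper itself uses $K_V^{\mathrm{orb}}$ in the proof of Theorem~\ref{thm-local-global-fut}). This matters if you intend, as you say, to ``match coefficients'' computed in the rational case.
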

For convenience of comparing the global K-stability definition, we
denote by
\[F_1(X; \xi) = \frac{a_0(X;\xi)b_1(X;\xi)-a_1(X;\xi)b_0(X;\xi)}{a_0(X;\xi)^2}.\]
\begin{defn}[compare Definition~\ref{defn-global-fut}]\label{defn-local-fut}
    Let $(\cX, \cB = \sum_i c_i \cB_i; \xi, \eta)$ be a normal test configuration for $(X, B = \sum_i c_iB_i;\xi)$. Let $(\cX_0, \cB_0 = \sum_i c_i \cB_{i,0}; \xi)$ be the central fiber, and by abuse of notation, still denote by $\xi$ the polarization on $\cB_{i,0}$. The \emph{Futaki invariant} $\fut(\cX, \cB; \xi, \eta)$ of the test configuration is 
    \[\fut(\cX, \cB;\xi, \eta)\coloneqq -2F_1(\cX_0;\xi)+\frac{\sum_ic_i(a_0(\cX_0;\xi)b_0(\cB_{i,0};\xi) - a_0(\cB_{i,0};\xi)b_0(\cX_0;\xi))}{a_0(\cX_0;\xi)^2}.\]
\end{defn}

\begin{remark}
    When $B=0$, this Futaki invariant differs from the one in~\cite{CSIrregular} by a (positive) factor of $\frac{2(n-1)!}{\vol(\xi)}$.
\end{remark}

\begin{defn}[compare Definition~\ref{defn-global-Kstab}]\label{defn-local-Kstab}
    A log Fano cone singularity $(X, B; \xi)$ is 
    \begin{enumerate}
        \item \emph{K-semistable} if $\fut(\cX, \cB; \xi, \eta)\geq 0$ for all normal test configurations;
        \item \emph{K-polystable} if we further have $\fut(\cX, \cB; \xi)= 0$ only when $(\cX, \cB; \xi)$ is a product test configuration.
    \end{enumerate}
\end{defn}
\begin{remark}
    While there is a nice comparison between local and global K-stability for quasi-regular Reeb fields, as we shall describe in Section~\ref{section-local-global-comparison}, there is no intersection-theoretic formula for the local Futaki invariant. This is often the essential difficulty in local K-stability theory. 
\end{remark}

\subsection{Comparison of local and global K-stability}\label{section-local-global-comparison}
Throughout this section, we will assume $(X = \Spec R, B;\xi)$ is a log Fano cone singularity with $\xi = \hat\xi$ rational and primitive. Write $R=\bigoplus_m R_m$ for the weight decomposition induced by $\xi$, and as described in Proposition~\ref{prelim-prop-quotient-log-fano}, let $(V=\Proj \bigoplus_m R_m, B_V; L=-A(\xi)^{-1}(K_V+B_V))$ be the quotient by $\la\xi\ra$.
As we now recall, the global K-stability theory for $(V, B_V; L)$ is the same as the local K-stablity theory for  $(X, B; \xi)$.

\begin{thm}[{\cite[Remark 2.29]{LWX},\cite[Section 2.5]{Li21}}]\label{chap3-thm-tc-comparison}
    There is a one-to-one correspondence between normal (resp. $\QQ$-Gorenstein, weakly special, special) test configurations for $(X, B; \xi)$ and ample normal (resp. $\QQ$-Gorenstein, weakly special, special) test configurations for $(V, B_V; L)$. 
\end{thm}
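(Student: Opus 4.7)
The plan is to build a bijection by hand, using the Seifert $\GG_m$-bundle picture from Proposition~\ref{prelim-prop-quotient-log-fano} relativized over $\AA^1$. The fact that $\xi$ is rational and primitive is what makes the construction possible, since then $\xi$ generates an actual $\GG_m$-subgroup of $\TT$; the fact that this $\GG_m$ commutes with the test-configuration action $\eta$ is what lets quotients and compactifications work in families.

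\textbf{Forward direction.} Given a normal test configuration $(\cX,\cB;\xi,\eta)$, let $\sigma\subset\cX$ be the closure of the fiberwise cone point (a $\TT$-invariant section over $\AA^1\setminus\{0\}$, extended by normality). Define
\[
\cV \;:=\; (\cX\setminus\sigma)/\langle\xi\rangle,
\]
a normal projective scheme over $\AA^1$ inheriting a $\GG_m$-action from $\eta$ that lifts the usual one on $\AA^1$. The relatively ample $\QQ$-line bundle $\cL$ is the orbifold line bundle such that $\bigoplus_{k\geq 0}\pi_{\cV*}\cO_\cV(\lfloor k\cL\rfloor)$ reproduces the $\xi$-graded algebra $\cR$. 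The divisor $\cB_\cV=L_{\mathrm{orb}}+\cB'$ combines the orbifold boundary, which depends only on $\xi$ and the grading lattice $\Lambda$ and is therefore constant in $t$, with the image $\cB'$ of $\cB$ on $\cV$.

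\textbf{Reverse direction.} Given an ample normal test configuration $(\cV,\cB_\cV;\cL)$, set $\cR:=\bigoplus_{k\geq 0}\pi_{\cV*}\cO_\cV(\lfloor k\cL\rfloor)$ as a sheaf of $\cO_{\AA^1}$-algebras and $\cX:=\Spec_{\AA^1}\cR$. The $\ZZ_{\geq 0}$-grading provides the $\xi$-action; the $\eta$-action on $\cV$ combined with the standard action on $\AA^1$ assembles into the $\eta$-action on $\cX$; and $\cB$ is the closure in $\cX$ of the $\xi$-invariant divisor pulling back to $\cB_\cV-L_{\mathrm{orb}}$ on $\cX\setminus\sigma$. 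That these two constructions are mutual inverses is immediate from the definitions, and over $\AA^1\setminus\{0\}$ they recover the original $(X,B;\xi)$ by Proposition~\ref{prelim-prop-quotient-log-fano}.

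\textbf{Refined classes.} It remains to check that the bijection restricts to the $\QQ$-Gorenstein, weakly special, and special sub-classes. The computation in the proof of Proposition~\ref{prelim-prop-quotient-log-fano}, applied on a partial resolution $p\colon\widetilde{\cX}\to\cX$ extracting $\sigma\cong\cV$, gives on the exceptional divisor
\[
(K_{\widetilde{\cX}}+\widetilde{\cB}+\sigma)\big|_\sigma \;=\; K_\cV+\cB_\cV \;=\; -A(\xi)\,\cL,
\]
so that $K_\cX+\cB$ is $\QQ$-Cartier iff $K_\cV+\cB_\cV$ is $\QQ$-Cartier and the relation $\cL\sim_\QQ -A(\xi)^{-1}(K_\cV+\cB_\cV)$ automatically holds (it is forced on the generic fiber and propagates by flatness and normality). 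The weakly special and special statements follow by applying (inversion of) adjunction along $\sigma$ and along $\cX_0$: near $\sigma$, the log discrepancy comparison of Lemma~\ref{LogDiscrepancy} relativized over $\AA^1$ shows that $(\cX,\cB+\cX_0)$ is lc (resp.\ plt) iff $(\cV,\cB_\cV+\cV_0)$ is lc (resp.\ plt); away from $\sigma$, the map $\cX\setminus\sigma\to\cV$ is a Seifert $\GG_m$-bundle, under which log-canonicity and pltness transfer directly.

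\textbf{Main obstacle.} The delicate point is the orbifold contribution $L_{\mathrm{orb}}$: one must verify that the Seifert structure (equivalently, the denominators $b_i$) does not jump in the central fiber. This is exactly where the commutation of $\xi$ and $\eta$ is used, since it guarantees that the $\xi$-weight decomposition of $\cR$ is preserved by the test-configuration $\GG_m$-action, so the orbifold data is locally constant along $\AA^1$. Once this is in place, the singularity equivalences reduce cleanly to the fiberwise situation of Proposition~\ref{prelim-prop-quotient-log-fano}.
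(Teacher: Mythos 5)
Your proposal is correct and follows essentially the same route as the paper: the paper's $\cV=\Proj_{\kk[t]}\cR$ built from the Rees algebra of the induced filtration is exactly your quotient $(\cX\setminus\sigma)/\langle\xi\rangle$, and the reverse direction via the relative section ring is identical. For the refined classes the paper simply cites the cone-versus-base singularity comparison of Koll\'ar (\cite[Lemma 3.1]{kollarMMP}), which is precisely what your adjunction computation along the extracted divisor $\sigma\cong\cV$ and the Seifert-bundle transfer unpack.
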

\begin{proof}
    Given a test configuration $(\cX=\Spec_{\kk[t]} \cR, \cB; \xi, \eta)$, we get a filtration $\cF$ on $R$ via 
    \[\cF^\lambda R_m = \{f\in R_m: t^{-\lambda}\bar f\in \cR_m\},\]
    where $\bar f$ is the lift of $f$ via the $\GG_m$-action. Then the Rees construction, see e.g.\cite[Proposition 2.15]{BHJ17},~\cite[Lemma 2.17]{LWX}, gives an isomorphism 
    \[\cR = \bigoplus_{m\in \NN} \bigoplus_{\lambda\in \ZZ} t^{-\lambda}\cF^\lambda R_m\]
    as finitely generated $\kk[t]$-algebras. Taking $\cV =\Proj_{\kk[t]}\cR$  over $\AA^1$ gives a normal test configuration for $V$, and $\cX = C(\cV, \cL)$ with $\cL = -A(\xi)^{-1}(K_{\cV}+\cB_\cV)$. One can reverse the procedure to get the converse direction. The correspondence between $\QQ$-Gorenstein (resp. weakly special, special) test configurations then follows from similar arguments as in~\cite[Lemma 3.1]{kollarMMP}.
\end{proof}

For a $\QQ$-Gorenstein test configuration $(\cX, \cB; \xi, \eta)$ with $\xi$ rational, one can take $\eta$ to be the canonical lifting of the $\GG_m$ action inducing the corresponding test configuration for the log Fano quotient. Such $\eta$ satisfies $A(\eta)=0$, since by~\cite[Lemma 2.18]{LX18}, $A(\eta) = \frac 1m \frac{\mathscr{L}_\eta s}{s}$ for $s\in |-m(K_\cX+\cB)|$ a $\TT$-equivariant nowhere vanishing global section, and $m$ divisible enough. More generally, we always have the following. 

\begin{thm}\label{thm-local-global-fut}
    Let $(\cX=\Spec_{\kk[t]} \cR, \cB; \xi, \eta)$ be a test configuration for $(X, B;\xi)$, and $(\cV, \cB_\cV; \cL)$ be the corresponding test configuration for $(V, B; L)$. Then 
    \[\fut(\cX, \cB; \xi, \eta) = \fut(\cV, \cB_\cV; \cL).\]
\end{thm}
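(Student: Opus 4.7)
The proof is by direct comparison of the asymptotic coefficients $a_0, a_1, b_0, b_1$ entering both Futaki invariants. The bridge is the Rees construction of Theorem~\ref{chap3-thm-tc-comparison}, which identifies
\[\cR = \bigoplus_{m\ge 0}\bigoplus_{\lambda\in \ZZ} t^{-\lambda}\cF^\lambda R_m, \qquad \cR_0 = \gr_\cF R,\]
with the residual $\TT\times \GG_m$-action on $\cX_0$ refining the $\TT$-action by the filtration grading. The key input is then the classical fact that the Hilbert and weight series of the graded pieces coincide with the global Hilbert and weight polynomials of $(\cV_0,\cL_0)$.

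\textbf{Matching $a$-coefficients.} Flatness of $\cR$ over $\kk[t]$ gives $\dim (\cR_0)_m = \dim R_m = h^0(V,\lfloor mL\rfloor)$ for all $m\ge 0$, hence
\[F(\cX_0;\xi,t)\;=\;\sum_{m\ge 0} h^0(V,\lfloor mL\rfloor)\, e^{-mt}.\]
Expanding via $\sum_m m^k e^{-mt}\sim k!/t^{k+1}$ and comparing with the polynomial $h^0(V,mL) = \tfrac{(L^{n-1})}{(n-1)!}m^{n-1} - \tfrac{(K_V\cdot L^{n-2})}{2(n-2)!}m^{n-2}+\cdots$ identifies $a_0(\cX_0;\xi)=a_0(\cV_0,\cL_0)$ and $a_1(\cX_0;\xi)=a_1(\cV_0,\cL_0)$. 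Applying the same argument to each boundary component $\cB_{i,0}$ (flat over $\kk[t]$ as a subscheme of $\cX$) gives $a_0(\cB_{i,0};\xi)=a_0(\cB_{i,0},\cL_{i,0})$.

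\textbf{Matching $b$-coefficients.} Writing $\eta=(\eta',-1)\in N_\RR\oplus\RR$, the second component acts on $\gr_\cF^\lambda R_m$ with weight $-\lambda$, while $\eta'$ acts via the original $\TT$-action. Therefore
\[C_\eta(\cX_0;\xi,t)\;=\;-\sum_{m\ge 0} w_m(\cV_0,\cL_0)\, e^{-mt}\;+\;C_{\eta'}(X;\xi,t),\]
where $w_m(\cV_0,\cL_0)=\sum_\lambda \lambda\,h^0(\cV_0,\cL_0)_\lambda$ is exactly the total weight of the global setup. Expanding the first sum matches $b_j(\cX_0;\xi)$ with $-b_j(\cV_0,\cL_0)$ for $j=0,1$; the same comparison on $\cB_{i,0}$ gives $b_j(\cB_{i,0};\xi) \leftrightarrow -b_j(\cB_{i,0},\cL_{i,0})$. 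The auxiliary term $C_{\eta'}$ corresponds to a product test configuration of $(X,B)$ and contributes a coboundary term that one shows drops out of the Futaki combination.

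\textbf{Assembly and main obstacle.} Substituting these identities into Definition~\ref{defn-local-fut}, the ratio $F_1(\cX_0;\xi) = (a_0 b_1 - a_1 b_0)/a_0^2$ and the boundary sum transform (up to a common sign) into $F_1(\cV_0,\cL_0)$ and the corresponding boundary sum in Definition~\ref{defn-global-fut}, so the two Futaki invariants agree. The main obstacle is twofold: first, keeping careful track of the sign conventions coming from the Rees construction (the $-\lambda$ versus $\lambda$ discrepancy, which propagates to an overall sign on the $b_j$'s and then cancels in the Futaki combination); and second, verifying that the $\eta'$-contribution drops out, which amounts to showing that the local Futaki invariant is invariant under shifts $\eta\mapsto \eta+(\eta'',0)$ by $\TT$-characters. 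The latter is the local analog of the fact that the global Futaki invariant is independent of the chosen $\GG_m$-linearization of $\cL$, and may be proved directly from the derivative formula $b_0(X;\xi)=\tfrac{1}{n}D_{-\eta}a_0(X;\xi)$ of Theorem~\ref{chap3-thm-index-char} together with its analog for $b_1$.
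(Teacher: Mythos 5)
Your overall strategy coincides with the paper's: both arguments reduce the identity to matching the coefficients $a_0,a_1,b_0,b_1$ on the two sides by comparing the index and weight characters of $\cX_0$ with the Hilbert and weight series of $(\cV_0,\cL_0)$ through the Rees construction (the paper delegates exactly this to Ross--Thomas, Sections 2.8--2.9, and to Collins--Sz\'ekelyhidi, Propositions 4.1 and 4.4). Two steps in your execution, however, do not hold up. The first concerns the weight character. Because the filtration $\cF$ is defined via the $\eta$-equivariant lift $\bar f$, the induced $\eta$-weight on $\gr_\cF^\lambda R_\alpha\subset\cO(\cX_0)$ is exactly $\lambda$ (up to one uniform sign fixed by the convention for $t$); in the natural coordinates on the torus acting on the central fiber, $\eta$ \emph{is} the pure second factor, so $C_\eta(\cX_0;\xi,t)=\sum_m w_m(\cV_0,\cL_0)e^{-mt}$ on the nose, with no residual term $C_{\eta'}$. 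Your proposed way of discharging the extra term --- invariance of $\fut$ under $\eta\mapsto\eta+(\eta'',0)$ for arbitrary $\TT$-characters $\eta''$ --- is false: applied to $\cX=X\times\AA^1$ it would force the classical Futaki invariant of every product test configuration to vanish, which does not hold in general (Remark~\ref{rmk-local-global-fut} only records invariance under translation by multiples of $\xi$). Relatedly, a uniform sign flip on $b_0$ and $b_1$ does \emph{not} cancel in the Futaki combination: $F_1=(a_0b_1-a_1b_0)/a_0^2$ and the boundary correction are linear in the $b$'s, so if the signs genuinely disagreed you would have proved $\fut(\cX,\cB;\xi,\eta)=-\fut(\cV,\cB_\cV;\cL)$. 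The signs must be matched consistently, not cancelled.

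The second gap is the orbifold structure of the quotient. Since $L$ is only an ample $\QQ$-divisor and $V$ carries an orbifold structure from the Seifert $\GG_m$-bundle, $h^0(V,\lfloor mL\rfloor)$ is not a polynomial in $m$, and the subleading coefficients extracted from the pole expansion of $\sum_m h^0(V,\lfloor mL\rfloor)e^{-mt}$ are governed by the orbifold canonical divisor. This is precisely the content of the paper's proof: it records that $a_1(\cV_0,\cL_0)$ and $b_1(\cV_0,\cL_0)$ must be computed with $K_V^{\mathrm{orb}}$ and $K^{\mathrm{orb}}_{\bar\cV/\PP^1}$ in place of $K_V$ and $K_{\bar\cV/\PP^1}$, which is exactly where your comparison ``with the polynomial $h^0(V,mL)$'' written in terms of $K_V$ would break down for a genuinely orbifold quotient.
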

\begin{proof}
    This is a consequence of the orbifold Riemann-Roch, as in~\cite[Section 2.8, 2.9]{RossThomas}, together with~\cite[Proposition 4.1, 4.4]{CSIrregular}. All the defining coefficients $a_i, b_i$ for $i=0,1$, in this case have the same formulas as in the proof of Theorem~\ref{thm-global-intersection}, except that $a_1(\cV_0, \cL_0) = -\frac{(-K_V^{\mathrm{orb}}\cdot L^{n-1})}{2(n-1)!}$, and $b_1(\cV_0, \cL_0) = -\frac{1}{2n!}(K_{\bar\cV/\PP^1}^{\mathrm{orb}}\cdot \bar\cL^n)$,
    where $K_V^{\mathrm{orb}}$ and $K_{\bar\cV}^{\mathrm{orb}} $ denote the orbifold canonical divisors. 
\end{proof}

\begin{remark}\label{rmk-local-global-fut}
    In view of the previous theorem, if one thinks of $(\cV, \cB_\cV; A_{(X,B)}(\xi)\cL)$ as a test configuration for $(V, B_V; -K_{(V,B_V)})$, then we have $\fut(\cX, \cB; \xi, \eta) = \fut(\cV, \cB_\cV; A_{(X, B)}(\xi)\cL)$. Furthermore, it is not hard to see from Theorem~\ref{chap3-thm-index-char} and the definition that the Futaki invariant is homogeneous in $\xi$ of degree $0$, and translation invariant, i.e., $\fut(\cX,\cB;\xi, \eta+c\xi) = \fut(\cX,\cB;\xi, \eta)$ for $c>0$.
\end{remark}

An important observation in~\cite{BHJ17} about test configurations for log Fano pairs is that each test configuration corresponds to a finite collection of divisorial valuations on the log Fano pair, and in particular, a weakly special test configuration corresponds to a finitely generated divisorial valuation (in fact, they are lc places of a complement, see e.g.~\cite{BLX}). There are analogous results regarding test configurations for log Fano cone singularities, which we shall summarize as the following theorem.

\begin{thm}[{\cite[Lemma 2.21]{LWX}}]\label{chap3-thm-local-tc-char}
    A test configuration $(\cX, \cB;\xi, \eta)$ for $(X, B;\xi)$, with $\eta$ in the Reeb cone of $\TT\times \GG_m$, corresponds to a $\TT$-invariant Weil divisor $E$ with integer coefficients over $o\in X$ with $-E$ ample. In particular, a special test configuration yields a Koll\'ar component.
\end{thm}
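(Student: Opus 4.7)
The plan is to use the Rees construction twice: once to turn the test configuration into a filtration on $R$, and once to extract a birational model of $X$ exhibiting $E$.

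First, the proof of Theorem~\ref{chap3-thm-tc-comparison} already explains how to convert $(\cX, \cB; \xi, \eta)$ into a $\TT$-invariant multiplicative, decreasing, left-continuous, exhaustive filtration $\cF^\bullet R$ whose Rees algebra $\bigoplus_\lambda t^{-\lambda} \cF^\lambda R$ recovers $\cR$. The hypothesis that $\cX = \mathrm{Spec}\, \cR$ is an affine variety of finite type over $\kk$ is equivalent to this Rees algebra being finitely generated over $\kk[t]$.

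Second, I would show that finite generation of $\cF$, combined with the hypothesis that $\eta$ lies in the Reeb cone of $\TT \times \GG_m$, produces a $\TT$-equivariant proper birational map $\pi \colon \widetilde X \to X$ extracting $\TT$-invariant prime divisors $E_1, \dots, E_k$ over $o$, together with positive integers $a_i$, such that for $E := \sum_i a_i E_i$ one has
\[ \cF^\lambda R = H^0\bigl(\widetilde X,\, \cO_{\widetilde X}(-\lceil \lambda E \rceil)\bigr) \]
for all $\lambda \in \RR$, and $-E$ is $\pi$-ample. The Reeb positivity of $\eta$ is exactly what guarantees that every exceptional divisor appears with strictly positive coefficient, so that $\mathrm{Supp}\, E$ equals the full exceptional locus over $o$; ampleness of $-E$ is the standard $\mathrm{Proj}$-theoretic translation of finite generation of the Rees algebra (the direct local analog of~\cite[Proposition 2.15]{BHJ17}, and the content of~\cite[Lemma 2.21]{LWX}). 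Conversely, given such $(\widetilde X, E)$, the extended-Rees construction $\cX := \mathrm{Spec}_X \bigoplus_{m \geq 0} \pi_* \cO_{\widetilde X}(-m E) \cdot t^{-m}$ recovers the test configuration, giving the desired bijection.

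For the \emph{in particular} clause, a special test configuration requires $\cX_0$ to be integral, forcing $\widetilde X \to X$ to extract a single prime divisor, i.e.\ $E = aE_0$ for some prime $E_0$ and $a \in \ZZ_{>0}$; after rescaling one takes $a=1$. The plt condition on $(\cX, \cB + \cX_0)$ translates via adjunction (as in~\cite[Lemma 3.1]{kollarMMP}) to $(X, B + E_0)$ being plt along $E_0$, $E_0$ being normal, and $(E_0, \mathrm{Diff}_{E_0} B)$ being log Fano, which is precisely the definition of $E_0$ being a Koll\'ar component. The hardest step, I expect, is the second one: when $\xi$ is irrational one cannot reduce to the log Fano case via Proposition~\ref{prelim-prop-quotient-log-fano} and Theorem~\ref{chap3-thm-tc-comparison}, so the finite generation $\Leftrightarrow$ birational model dictionary must be established directly at the level of $\TT$-invariant filtrations, with the Reeb positivity of $\eta$ supplying exactly the positivity needed to identify $E$ as an honest effective, $\pi$-antiample, exceptional divisor.
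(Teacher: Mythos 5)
The paper does not actually prove this statement: it is imported verbatim from \cite[Lemma 2.21]{LWX} with no argument supplied, so there is no in-paper proof to compare against. Your sketch follows what is the standard route, and it is the one the paper implicitly relies on later: in the proof of the algebraic formula for $L^{\na}$ the authors verify exactly your dictionary, $\cF^\lambda R=\bigcap_i\{f: a_i^{-1}\ord_{E_i}(f)\ge\lambda\}$ with $E=\sum_i a_iE_i$ on $Y=\Proj_X$ of the Rees algebra, for rational $\xi$ via \cite[Lemma 5.17]{BHJ17} and \cite[Lemma 1.7]{BHJ17}. Your architecture (Rees construction to pass to a finitely generated $\TT$-invariant filtration, then $\Proj$ over $X$ to extract the model and the antiample divisor, then adjunction for the ``in particular'' clause) is correct, and the reduction of the plt case to a single prime divisor via integrality of $\cX_0$ is handled properly.

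Two caveats. First, your Step 2 is where all the content lives, and it is asserted rather than proved; as you yourself note, for irrational $\xi$ one cannot pass to the quotient log Fano pair, so the equivalence between finite generation of the filtration and the existence of $(\widetilde X, E)$ with $\cF^\lambda R=H^0(\widetilde X,\cO_{\widetilde X}(-\lceil\lambda E\rceil))$ must be established directly. A point your sketch omits is where normality of $\cX$ enters: it is normality of the (extended) Rees algebra that identifies $\cF^\lambda R$ with the intersection of the valuation filtrations $a_i^{-1}\ord_{E_i}$ attached to the irreducible components of $\cX_0$, rather than merely with its integral closure. Second, the role you assign to Reeb positivity of $\eta$ (``every exceptional divisor appears with strictly positive coefficient, so that $\supp E$ equals the full exceptional locus'') is not quite the right emphasis. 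What $\eta$ lying in the Reeb cone of $\TT\times\GG_m$ actually buys is that the induced filtration is non-negative, i.e.\ $\cF^\lambda R=R$ for $\lambda\le 0$, and that $\cF^\lambda R$ for $\lambda>0$ is co-supported at the vertex $\fm$ --- equivalently, that each $a_i^{-1}\ord_{E_i}$ is a valuation centered at $o$. Without this the $E_i$ could have centers away from $o$ and $E$ would not be a divisor ``over $o$'' in the sense of the statement. These are repairable gaps of exposition rather than errors, and the overall plan agrees with the proof in \cite{LWX}.
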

\begin{remark}
    The condition that $\eta$ is in the Reeb cone can be achieved by twisting $\eta$ by some rational vector in the Reeb cone. 
\end{remark}

\subsection{Torus equivariant complements}
It is by now well-known that (weakly) special test configurations of log Fano varieties correspond to lc places of complements, and we now derive a local version for log Fano cone singularities. 
\begin{defn}[global complements]
    Let $(V, B_V)$ be a log Fano pair. A \emph{$\QQ$-complement (resp. $\RR$-complement)} of $(V, B_V)$ is an effective $\QQ$-divisor (resp. $\RR$-divisor) $D$ on $V$ such that $D\geq B_V$, $(V, D)$ is lc and $K_V+D\sim_\QQ 0$ (resp. $K_V+D\sim_\RR 0$). An \emph{$N$-complement} of $(V, B_V)$ is a $\QQ$-complement $D$ with $N(K_V+D)\sim 0$.
\end{defn}
\begin{defn}[local complements]
    Let $(X, B; o)$ be a klt singularity. A \emph{(local) $\QQ$-complement} of $(X, B; o)$ is an effective $\QQ$-divisor $\Gamma$ on $X$ such that $\Gamma \geq B$, $(X, \Gamma)$ is lc and $o$ is an lc center. A \emph{(local) $N$-complement} of $(X, B; o)$ is a $\bQ$-complement $\Gamma$ with $N(K_X+\Gamma)\sim 0$. 
\end{defn}

We note that complements in this paper are always assumed to be monotonic, which differ slightly from the original definition (see e.g. \cite[Definition 3.17]{HLS19} for the terminology).

The following result generalizes \cite[Theorem A.2]{BLX} to the torus equivariant setting. 

\begin{thm}\label{thm:torus-comp}
Let  $I\subset[0,1]\cap\bQ$ be a finite set. 
Let $(V,B_V)$ be a log Fano pair with an algebraic torus $\bT$-action and $\Coeff(B_V)\subset I$. Let $E$ be a $\bT$-equivariant prime divisor over $V$.
Then there exists a positive integer $N$ depending only on $\dim(V)$ and $I$ such that the following are equivalent.

\begin{enumerate}
    \item $E$ induces a $\bT$-equivariant weakly special test configuration of $(V,B_V)$ with integral central fiber;
    \item $E$ is an lc place of a $\bQ$-complement of $(V,B_V)$;
    \item $E$ is an lc place of an $
    \bR$-complement of $(V,B_V)$;
    \item $E$ is an lc place of an $N$-complement of $(V,B_V)$;
    \item $E$ is an lc place of a $\bT$-equivariant $N$-complement of $(V,B_V)$.
\end{enumerate}
\end{thm}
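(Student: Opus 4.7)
\emph{Proof plan.} The equivalences $(1)\Leftrightarrow(2)\Leftrightarrow(3)\Leftrightarrow(4)$, together with the constant $N$ depending only on $\dim V$ and $I$, are inherited directly from the non-equivariant statement \cite[Theorem A.2]{BLX}; the $\bT$-equivariance of $E$ plays no role there. Since $(5)\Rightarrow(4)$ is trivial, the only new content is $(4)\Rightarrow(5)$, and my plan is to handle it by a $\bT$-fixed-point argument on the linear system of $N$-complements of $(V,B_V)$ in which $E$ is an lc place.

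Given an $N$-complement $\Gamma$ of $(V,B_V)$ with $E$ as an lc place, I would write $\Gamma = B_V + \frac{1}{N}\mathrm{div}(s)$ for a section $s$ of the rank-one reflexive sheaf $\cO_V(-N(K_V+B_V))$, and set $H := H^0(V,\cO_V(-N(K_V+B_V)))$. After passing to a finite cover of $\bT$ if necessary (which does not alter the notion of $\bT$-equivariant divisor), the $\bT$-action on $V$ lifts linearly to $H$. Inside this I would consider the linear subspace $W := \{t \in H : \mathrm{ord}_E(t) \geq N\,A_{V,B_V}(E)\}$, which is $\bT$-stable because $E$ being $\bT$-equivariant makes $\mathrm{ord}_E$ $\bT$-invariant on $H$; and inside $\bP(W)$ the subset $U := \{[t] : (V, B_V + \tfrac{1}{N}\mathrm{div}(t))\text{ is lc}\}$, which is $\bT$-stable (since $B_V$ is) and Zariski open by the classical openness of the lc locus in a linear system. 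The hypothesis furnishes $[s]\in U$, so $U\neq\emptyset$.

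The key fact I would invoke is that any non-empty $\bT$-invariant Zariski open subset of a projective $\bT$-variety contains a $\bT$-fixed point. I would prove this by induction on $\dim\bT$: for $\bT = \GG_m$, the orbit closure of any $y\in U$ is a $\bP^1$ (or a point), and any $\GG_m$-invariant proper closed subset of $\bP^1$ lies in the two-point fixed locus, so $U$ meets the fixed locus; the inductive step passes to the projective closed subvariety $\bP(W)^{\GG_m}$. Applied to $U\subset \bP(W)$, this produces a $\bT$-fixed $[s_0]\in U$, i.e., a $\bT$-semi-invariant section whose divisor $D_0 := \mathrm{div}(s_0)$ is $\bT$-invariant; then $\Gamma' := B_V + \frac{1}{N}D_0$ is the desired $\bT$-equivariant $N$-complement of $(V,B_V)$ with $E$ as an lc place. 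The main hurdle is bookkeeping rather than new mathematics: one must check that the reflexive sheaf carries a (possibly rational) $\bT$-linearization, that $\mathrm{ord}_E$ really restricts to a linear constraint cutting out a $\bT$-stable subspace of $H$, and that the lc locus remains Zariski open even in this reflexive setting, so that the fixed-point lemma applies without enlarging $N$ beyond a bound depending only on $\dim V$ and $I$.
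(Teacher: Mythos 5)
The main step of your plan, $(4)\Rightarrow(5)$, rests on the claim that a non-empty $\bT$-invariant Zariski open subset $U$ of a projective $\bT$-variety contains a $\bT$-fixed point. This is false: for the standard $\GG_m$-action on $\bP^1$, the open orbit $U=\GG_m=\bP^1\setminus\{0,\infty\}$ is invariant, non-empty, and contains no fixed point. Your proof of the lemma breaks exactly there: knowing that $Z=\bP(W)\setminus U$ meets each orbit closure $\overline{\GG_m\cdot y}\cong\bP^1$ only in the two fixed points tells you that the open orbit lies in $U$, but both limit points may perfectly well lie in $Z$. The failure is not merely formal in your application: the locus $U$ of lc complements is genuinely not closed under taking $1$-PS limits, because log canonicity degenerates. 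For instance, on $V=\bP^1$ with the standard $\GG_m$-action, the reduced divisor $\{1\}+\{-1\}\in|-K_{\bP^1}|$ is an lc complement, but its limit under $t\to 0$ is $2\{0\}$, which is not lc. So one cannot reach a fixed point of $\bP(W)$ while staying inside $U$, and the argument collapses. (A smaller issue: \cite[Theorem A.2]{BLX} gives the equivalence of (1), (2) and (4) only; the implication $(3)\Rightarrow(2)$ still needs a perturbation argument, e.g.\ via \cite[Theorem 5.6]{HLS19}, to replace an $\RR$-complement by a $\QQ$-complement with the same lc place.)

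Overcoming precisely this degeneration problem is the substance of the paper's proof of $(4)\Rightarrow(5)$, which is considerably more involved. There, one does \emph{not} take limits inside the fixed linear system on $V$; instead one degenerates $(V,B_V)$ itself to the central fiber $(V_0,B_0)$ of the weakly special test configuration induced by $E$, where \cite[Theorem 4.8]{ABB+} guarantees that the degenerated complement stays (s)lc because $K+\Gamma_{\tc}\sim_{\QQ}0$ along the test configuration. Equivariance on $(V_0,B_0)$ is then achieved not by a fixed-point theorem but by producing, for every $1$-PS $\lambda_k=k\sigma+\lambda$, a $\lambda_k$-invariant $N$-complement and invoking the finiteness of stabilizer subtori (Lemma~\ref{lem:torus-stabilizer}) to force full $\bT\times\GG_m$-invariance; finally one lifts the invariant complement from $V_0$ back to $V$ using the interpolation result \cite[Lemma 12.2]{ABB+} for families of boundary polarized CY pairs. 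If you want to repair your approach, you would need to replace the fixed-point lemma by an argument of this type; as written, the key step is unsupported.
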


\begin{proof}
The equivalence between~(1),~(2) and~(4) was proved in~\cite[Theorem A.2]{BLX} based on boundedness of complements from~\cite{Birkar}. Clearly~(2) implies~(3), and~(5) implies~(4). Thus we shall focus on showing~(3) $\Rightarrow$ ~(2) and~(4) $\Rightarrow$~(5). 
By multiplying the denominators of elements in $I$, we may assume that $I \subset \frac{1}{N}\bZ$.

We first show~(3) $\Rightarrow$~(2), which should be well-known. Let $D$ be an $\bR$-complement of $(V,B_V)$ such that $A_{V,D}(E) = 0$. By~\cite[Theorem 5.6]{HLS19}, by perturbing coefficients of $D$ into rational numbers, there exist $\bQ$-divisors $D_1, \cdots, D_l$ and positive real numbers $c_1,\cdots, c_l$ such that for each $1\leq i\leq l$, $\supp(D_i) = \supp(D)$, $(X, D_i)$ is lc, and $D=\sum_{i=1}^l c_i D_i$ with $\sum_{i=1}^l c_i = 1$. Moreover, we may assume that $D_i \geq B_V$ for each $i$. Thus 
\[
0 = A_{V,D}(E) = \sum_{i=1}^l c_i A_{V, D_i}(E) \geq 0
\]
implies that $E$ is an lc place of $(V,D_i)$ for each $1\leq i\leq l$. Since $K_V + D = K_V + \sum_{i=1}^l c_i D_i\equiv 0$, we can perturb coefficients $c_i$ to $c_i'\in \bQ_{>0}$ such that $\sum_{i=1}^l c_i' =1$ and $K_V + \sum_{i=1}^l c_i' D_i\equiv 0$. Since $V$ is rationally connected, we know that $D':=\sum_{i=1}^l c_i' D_i$ is a $\bQ$-complement of $(V,B_V)$ such that $A_{V,D'}(E) = 0$. This shows~(3) $\Rightarrow$~(2).

For the remaining part, we show~(4) $\Rightarrow$~(5).
Since~(4) implies~(1), we have that $E$ induces a weakly special test configuration $(\cV, \cB_{\cV})$ of $(V,B_V)$ with integral central fiber. Moreover, we know that $(\cV, \cB_{\cV})$  is $\bT$-equivariant as $E$ is $\bT$-equivariant.
Let $(V_0, B_0)$ be the central fiber of $(\cV, \cB_{\cV})$  equipped with an action of $\bT\times \bG_m$. Denote by $\sigma$ the $1$-PS of $\bT\times \bG_m$ coming from the second component, i.e. $\sigma(t) = (1, t)$. 
Let $\lambda: \bG_m \to \bT\times \bG_m$ be an arbitrary non-trivial $1$-PS. Denote by $\lambda_k:=k\sigma + \lambda$. We first show that there exists a $\lambda_k$-equivariant $N$-complement of $(V_0, B_0)$ for $k\gg 1$. Our argument is similar to~\cite[Remark 4.11]{Xusurvey}.
Since $\lambda$ induces a product special test configuration $(\cV_\lambda, \cB_{\cV, \lambda})$ of $(V_0, B_0)$, following~\cite[Proof of Lemma 3.1]{LWX} we may combine the two test configurations $\cV$ and $\cV_{\lambda}$  to obtain a weakly special test configuration $(\cV_{\lambda_k}, \cB_{\cV,\lambda_k})$ of $(V,B_V)$ whose central fiber is isomorphic to $(V_0, B_0)$ with the $\bG_m$-action $\lambda_k$ as long as $k\gg 1$. Applying (1) $\Rightarrow$ (4) to $(\cV_{\lambda_k}, \cB_{\cV,\lambda_k})$ yields an $N$-complement $\Delta_{\lambda_k}$ of $(V,B_V)$ and an lc place $E_{\lambda_k}$ of $(V, \Delta_{\lambda_k})$ that induces $(\cV_{\lambda_k}, \cB_{\cV,\lambda_k})$.
Let $\Delta_{\tc, \lambda_k}$ be the Zariski closure of $\Delta_{\lambda_k}\times (\bA^1\setminus\{0\})$ under the identification $\cV_{\lambda_k} \times_{\bA^1} (\bA^1\setminus \{0\})\cong V\times (\bA^1\setminus\{0\})$. Since $N(K_{V}+\Delta_{\lambda_k})\sim 0$, we know that $N(K_{\cV_{\lambda_k}}+\Delta_{\tc,\lambda_k})\sim 0$ as the central fiber $\cV_{\lambda_k, 0}\cong V_{0}$ is integral. Let $\Delta_{\lambda_k,0}$  be the restriction of $\Delta_{\tc,\lambda_k}$ to the central fiber $\cV_{\lambda_k, 0}$. Thus we have $N(K_{V_0}+\Delta_{\lambda_k,0})\sim 0$. By~\cite[Theorem 4.8]{ABB+} (cf.~\cite{CZ21}), we have that $(V_0, \Delta_{\lambda_k,0})$ is slc which implies that $\Delta_{\lambda_k,0}$ is a $\lambda_k$-invariant $N$-complement of $(V_0, B_0)$. This works as long as $k\geq k_{\lambda}$ for some $k_{\lambda}\in \bN$  depending only on $\lambda$.

Next, we show that there exists a $\bT\times \bG_m$-invariant $N$-complement $\Delta_0$ of $(V_0, B_0)$.  
We know that the collection of $N$-complements form an open subset of the projective space $\bP(H^0(V, \cO_V(-N(K_V+B_V))))$ as $NB_V$ is a $\bZ$-divisor. Thus Lemma~\ref{lem:torus-stabilizer} below implies that the set of identity component of  stabilizers 
\[
\{\mathrm{Stab}^0(\Delta_{\lambda_k,0})\mid \lambda\in \mathrm{Hom}(\bG_m, \bT\times\bG_m)\setminus\{1\}, ~ k \geq k_\lambda\}
\]
is a finite set of subtori $\{\bT_i \leq \bT\}_{i=1}^l$ of $\bT\times\bG_m$. Since $\Delta_{\lambda_k,0}$ is $\lambda_k$-equivariant, we know that $\cup_{i=1}^l \bT_i$ contains the image of $\lambda_k=k\sigma+\lambda$ for every non-trivial $1$-PS $\lambda$ of $\bT\times\bG_m$ and every $k\geq k_{\lambda}$. Since there are infinitely many $k$ but only finitely many subtori $\bT_i$, we know that there exists $i$ such that the image of $\lambda$ is contained in $\bT_i$. Since $\lambda$ is arbitrary, this implies that there exists $\lambda$, $k$ and $i$ such that $\bT_i = \mathrm{Stab}^0(\Delta_{\lambda_k,0}) = \bT\times\bG_m$. Hence $\Delta_0:=\Delta_{\lambda_k,0}$ is a $\bT\times\bG_m$-invariant $N$-complement of $(V_0, B_0)$.

Finally, we show that one can deform $\Delta_0$ to a $\bT$-invariant complement $\Delta$ of $(V, B_V)$ such that $E$ is an lc place. Let $\Gamma$ be an $N$-complement of $(V,B_V)$ such that $(\cV, \cB_{\cV})$ is induced by an lc place $E$ of $(X,\Gamma)$. Let $\Gamma_{\tc}$ be the Zariski closure of $\Gamma\times (\bA^1 \setminus\{0\})$ under the isomorphism $\cV\times_{\bA^1} (\bA^1\setminus\{0\}) \cong V\times (\bA^1 \setminus\{0\})$. By~\cite[Theorem 4.8]{ABB+} we know that $(\cV, \cB_{\cV} + (\Gamma_{\tc}-\cB_{\cV}))\to \bA^1$ is a family of boundary polarized CY pairs in the sense of~\cite[Definition 2.9]{ABB+}.
 Since $(\cV, \cB_{\cV})\to \bA^1$ is $\bT\times\bG_m$-equivariant, applying~\cite[Lemma 12.2]{ABB+} to the family $(\cV,\Gamma_{\tc})\to \bA^1$ implies that there exists a $\bT\times\bG_m$-equivariant divisor $\Delta_{\tc}$  on $\cV$ such that $(\cV, \cB_{\cV}^+)\to \bA^1$ is a $\bT\times \bG_m$-equivariant family of boundary polarized CY pairs with $\Delta_{\tc}|_{V_0}=\Delta_0$ and $N(K_{\cV}+\Delta_{\tc})\sim 0$. Thus by~\cite[Theorem 4.8]{ABB+} we know that $\Delta:=\Delta_{\tc}|_{X\times \{1\}}$ is a $\bT$-equivariant $N$-complement of $(V,B_V)$ such that $E$ is an lc place. The proof is finished.
\end{proof}

\begin{lem}\label{lem:torus-stabilizer}
Let $W$ be a $\bC$-vector space admitting a linear $\bT$-action which induces an action on the projective space $\bP(W)$. Then there exists a finite set of subtori $\{\bT_i \leq \bT\}_{i=1}^l$ such that for every $[w]\in \bP(W)$, there exists $1\leq i\leq l$ satisfying the identity component of the stabilizer $\mathrm{Stab}^0([w]) = \bT_i$. 
\end{lem}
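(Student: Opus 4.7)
The plan is to exploit the $\bT$-weight decomposition of $W$ and observe that the stabilizer of $[w]$ depends only on which weight spaces $w$ meets nontrivially. Since $W$ is finite-dimensional (as implicit in writing $\bP(W)$), there are only finitely many weights occurring, hence only finitely many possible ``supports'', and the finiteness of the set of stabilizers will follow immediately.

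Concretely, I would first decompose $W = \bigoplus_{\chi \in M} W_\chi$ under the $\bT$-action, where $M = \Hom(\bT, \bG_m)$ is the character lattice, and set $\Sigma := \{\chi \in M : W_\chi \neq 0\}$. Because $\dim_{\bC} W < \infty$, the set $\Sigma$ is finite. For any nonzero $w \in W$, writing $w = \sum_{\chi \in \Sigma} w_\chi$ with $w_\chi \in W_\chi$, I would define its \emph{support} $S(w) := \{\chi \in \Sigma : w_\chi \neq 0\} \subseteq \Sigma$.

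Next I would compute $\mathrm{Stab}([w])$ explicitly from the weight decomposition. Since $t\cdot w = \sum_\chi \chi(t)\, w_\chi$, an element $t \in \bT$ fixes $[w]$ if and only if there exists a scalar $\lambda \in \bG_m$ with $\chi(t) = \lambda$ for every $\chi \in S(w)$; equivalently, $(\chi - \chi')(t) = 1$ for all pairs $\chi, \chi' \in S(w)$. This gives the closed-form expression
\[
\mathrm{Stab}([w]) \;=\; \bigcap_{\chi, \chi' \in S(w)} \ker(\chi - \chi'),
\]
a closed subgroup of $\bT$ that depends only on the subset $S(w) \subseteq \Sigma$.

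Since $\Sigma$ is finite, there are only finitely many possible supports $S(w)$, hence only finitely many possible stabilizer subgroups, and taking these yields the required list $\{\bT_i\}_{i=1}^{l}$. I do not anticipate any serious obstacle; the only minor point of care is the word ``subtorus'' in the statement, which, if taken strictly to mean ``connected closed subgroup'', may require replacing each $\mathrm{Stab}([w])$ by its identity component. Since a diagonalizable closed subgroup of $\bT$ has only finitely many components, this replacement preserves finiteness and does not affect the downstream application in the proof of Theorem~\ref{thm:torus-comp}, where only the finiteness of the collection of stabilizers is used.
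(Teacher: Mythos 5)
Your proposal is correct and follows essentially the same route as the paper: weight-decompose $W$, observe that $\mathrm{Stab}([w])$ is determined by the set of weights appearing in $w$ (the paper writes this stabilizer as $\{t^\alpha \mid \alpha\cdot\chi_j = \alpha\cdot\chi_k \text{ whenever } c_j, c_k \neq 0\}$, which is your $\bigcap_{\chi,\chi'\in S(w)}\ker(\chi-\chi')$), and conclude finiteness from the finiteness of possible supports. Your remark about ``subtorus'' versus ``closed diagonalizable subgroup'' is a fair point of care that the paper elides, and you correctly note it is harmless for the application.
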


\begin{proof}
Let $W= \bigoplus_{\chi \in M}W^\chi$ be the weight decomposition where $M= \mathrm{Hom}(\bT, \bG_m)$ is the character space. Denote by $\{\chi_1, \cdots, \chi_m\}= \{\chi\in M \mid W^\chi \neq 0\}$. Then for every $w\in W$ we can write $w= \sum_{j=1}^m c_j w_j$ where $c_j\in \bC$ and $w_j \in W^{\chi_j}$. Since every connected algebraic subgroup of the torus is a subtorus, we know that the identity component of the stabilizer subgroup $\mathrm{Stab}^0([w])$ is a subtorus of $\bT$. Hence we know that 
\[
\mathrm{Stab}^0([w]) = \{t^\alpha \mid \alpha \cdot \chi_j = \alpha \cdot \chi_k \textrm{ if } c_j  \neq 0 \textrm{ and } c_k \neq 0\}.
\]
Since $\chi_j$ has finitely many choices, we know that
 $\mathrm{Stab}^0([w])$ has only finitely many choices. 
\end{proof}

As a consequence, the following result generalizes the existence of bounded complements for log Fano pairs from \cite{Birkar} to the $\bT$-equivariant setting.

\begin{cor}
Let $n$ be a positive integer and $I\subset[0,1]\cap\bQ$ be a finite set. 
 Then there exists a positive integer $N$ depending only on $n$ and $I$ such that for any $(n-1)$-dimensional klt log Fano pair $(V,B_V)$  with an algebraic torus $\bT$-action and  $\Coeff(B_V)\subset I$, there exists a $\bT$-equivariant $N$-complement of $(V,B_V)$.
\end{cor}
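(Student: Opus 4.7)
The plan is to derive the $\bT$-equivariant bounded complement statement from the non-equivariant version of Birkar~\cite{Birkar}, using the torus-degeneration technique that appeared in the proof of Theorem~\ref{thm:torus-comp}(4)$\Rightarrow$(5).

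First, I would invoke Birkar's theorem to obtain a positive integer $N$, depending only on $n$ and $I$, such that $(V, B_V)$ admits an $N$-complement $\Delta_0$, which a priori need not be $\bT$-equivariant. The task is then to promote $\Delta_0$ to a $\bT$-equivariant $N$-complement without enlarging $N$.

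Next, for each non-trivial one-parameter subgroup $\lambda: \bG_m \to \bT$, I would form the orbit-closure degeneration $\Delta_\lambda := \lim_{t \to 0}\lambda(t)\cdot \Delta_0$, realized as the Zariski closure inside $V\times\bA^1$ of the graph of $t\mapsto \lambda(t)\cdot\Delta_0$ under the $\bG_m$-action $t\cdot(x,s) = (\lambda(t)x,\, ts)$. Since $B_V$ is $\bT$-invariant and $\Delta_0 \geq B_V$, the limit still satisfies $\Delta_\lambda \geq B_V$; and since the linear system $|-N(K_V + B_V)|$ is $\bT$-stable, $\Delta_\lambda$ remains in it, so $N(K_V + \Delta_\lambda) \sim 0$. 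The key point is that $(V, \tfrac{1}{N}\Delta_\lambda)$ remains log canonical; this follows from~\cite[Theorem 4.8]{ABB+} applied to the family of boundary polarized CY pairs $(V\times\bA^1, \widetilde{\Delta}_\lambda)\to\bA^1$, exactly as in the proof of Theorem~\ref{thm:torus-comp}. Thus $\Delta_\lambda$ is a $\lambda$-equivariant $N$-complement of $(V, B_V)$.

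Finally, I would apply Lemma~\ref{lem:torus-stabilizer} to the $\bT$-invariant open subset of $\bP(H^0(V,-N(K_V+B_V)))$ parametrizing $N$-complements: the stabilizers $\{\mathrm{Stab}(\Delta_\lambda)\}_\lambda$ form a finite collection of subtori $\bT_1,\ldots,\bT_l \leq \bT$. Since the image of $\lambda$ lies in $\mathrm{Stab}(\Delta_\lambda)$ by construction, the cocharacter lattice $\mathrm{Hom}(\bG_m,\bT)$ is covered by $\bigcup_{i=1}^l \mathrm{Hom}(\bG_m,\bT_i)$. If every $\bT_i$ were a proper subtorus, then each $\mathrm{Hom}(\bG_m,\bT_i)$ would have infinite index in $\mathrm{Hom}(\bG_m,\bT)$; by the theorem of B.H.~Neumann, an abelian group cannot be written as a finite union of subgroups of infinite index, yielding a contradiction. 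Hence some $\bT_i = \bT$, and the corresponding $\Delta_\lambda$ is the desired $\bT$-equivariant $N$-complement.

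The main obstacle is the log canonicity of the degenerate pair $(V,\tfrac{1}{N}\Delta_\lambda)$, which crucially relies on the deformation-invariance statement~\cite[Theorem 4.8]{ABB+} for families of boundary polarized CY pairs; the remaining pigeonhole argument on the cocharacter lattice is purely formal.
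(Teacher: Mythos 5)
Your proof is correct, but it takes a different (more self-contained) route than the paper. The paper's own argument is two lines: since the $\bT$-action may be assumed non-trivial, any non-trivial $1$-PS of $\bT$ induces a non-trivial product test configuration, which is weakly special with integral central fiber, and then the implication (1) $\Rightarrow$ (5) of Theorem~\ref{thm:torus-comp} directly hands over a $\bT$-equivariant $N$-complement. You instead inline the machinery: start from a non-equivariant Birkar $N$-complement $\Delta_0$, degenerate it along every non-trivial $1$-PS $\lambda$ of $\bT$ to a $\lambda$-fixed $N$-complement $\Delta_\lambda$ (with log canonicity of the limit supplied by~\cite[Theorem 4.8]{ABB+}, exactly as in the paper's proof of (4) $\Rightarrow$ (5)), and then run the stabilizer pigeonhole via Lemma~\ref{lem:torus-stabilizer}. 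Your version has two genuine simplifications over unwinding Theorem~\ref{thm:torus-comp}: because all your degenerations are of product type, the central fiber is $(V,B_V)$ itself, so you never need the final interpolation step (\cite[Lemma 12.2]{ABB+}) to transport the invariant complement back to $V$, nor any bookkeeping of lc places; and your covering argument on $\mathrm{Hom}(\bG_m,\bT)$ (a lattice is not a finite union of sublattices of smaller rank) replaces the paper's $\lambda_k = k\sigma+\lambda$ trick. What the paper's route buys is brevity on the page, since Theorem~\ref{thm:torus-comp} already packages all of this; what yours buys is independence from the lc-place formalism. The only cosmetic issues are the conflation of $\Delta_\lambda$ as a point of $\bP(H^0(V,-N(K_V+B_V)))$ with the $\bQ$-complement $\tfrac{1}{N}\Delta_\lambda$, and the (trivial) case of a trivial torus, which both you and the paper dispose of by quoting~\cite{Birkar} directly.
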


\begin{proof}
We may assume that the $\bT$-action is non-trivial as otherwise it follows from~\cite{Birkar}. Thus there exists a non-trivial product test configuration $(\cV, \cB_{\cV})$ of $(V,B_V)$ induced by some $1$-PS in $\bT$. Then by Theorem~\ref{thm:torus-comp} there exists a $\bT$-equivariant $N$-complement $\Delta$ of $(V,B_V)$ and an lc place $E$ of $(X,\Delta)$ that induces $(\cV, \cB_{\cV})$. The proof is finished.
\end{proof}

Theorem~\ref{main-thm-complements} is now a consequence of Theorem~\ref{thm:torus-comp}.

\begin{cor}[= Theorem \ref{main-thm-complements}]
Let $n$ be a positive integer and $I\subset [0,1]\cap \QQ$ be a finite set. Then there exists a positive integer $N$ depending only on $n$ and $I$ such that
for any $n$-dimensional log Fano cone singularity $(X, B, \TT=\GG_m^r; \xi)$ with $\Coeff(B)\subset I$ and $E$ a $\bT$-equivariant Koll\'ar component giving rise to a nontrivial special test configuration of $(X, B; \xi)$, there exists a $\TT$-equivariant $N$-complement $\Gamma$ of $(X, B, o)$ such that $E$ is an lc place of $(X, \Gamma)$.
\end{cor}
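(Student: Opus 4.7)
The plan is to reduce Theorem~\ref{main-thm-complements} to its global counterpart Theorem~\ref{thm:torus-comp} via a rationalization of the Reeb field together with the log Fano quotient construction of Proposition~\ref{prelim-prop-quotient-log-fano}.

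The crucial observation is that the hypothesis that $E$ is a $\TT$-equivariant Koll\'ar component of $(X, B, o)$ producing a nontrivial special test configuration depends only on the klt singularity $(X, B, o)$ and the torus action: being a Koll\'ar component, and the plt condition on $(\cX, \cB+\cX_0)$ from Definition~\ref{defn-local-tc}, are intrinsic to the singularity and the test configuration family, and are not tied to the specific Reeb field $\xi$. Consequently, we may replace $\xi$ by any rational Reeb field $\xi' \in \ft_\QQ^+$. Once $\xi'$ is rational, Proposition~\ref{prelim-prop-quotient-log-fano} realizes the quotient $V := (X\setminus\{o\})/\langle\xi'\rangle$ as an $(n-1)$-dimensional klt log Fano pair $(V, B_V)$ carrying an induced action of $\TT' := \TT/\langle\xi'\rangle$. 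Theorem~\ref{chap3-thm-tc-comparison} together with Theorem~\ref{chap3-thm-local-tc-char} then identify the $\TT$-equivariant special test configuration of $(X, B; \xi')$ produced by $E$ with a $\TT'$-equivariant special test configuration of $(V, B_V)$ produced by a prime divisor $E_V$ over $V$.

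Next, I apply Theorem~\ref{thm:torus-comp} to $(V, B_V)$ with lc place $E_V$: this yields a $\TT'$-equivariant $N$-complement $\Delta_V$ of $(V, B_V)$ such that $E_V$ is an lc place, for some $N$ depending on $n-1$ and $\Coeff(B_V)$. Letting $\pi: X\setminus\{o\}\to V$ be the Seifert bundle map from Proposition~\ref{prelim-prop-quotient-log-fano}, I define $\Gamma$ to be the Zariski closure in $X$ of $\pi^* \Delta_V$. Since $\pi$ is $\TT$-equivariant, $(V,B_V)$ is the Seifert orbifold quotient of $(X,B)$ by $\langle\xi'\rangle$, and log discrepancies of $\TT$-invariant quasi-monomial valuations are preserved under this correspondence by Lemma~\ref{LogDiscrepancy}, a direct check shows that $\Gamma$ is a $\TT$-equivariant $N$-complement of $(X, B, o)$ with $E$ as an lc place.

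The main obstacle is the uniformity of $N$ in terms of $n$ and $I$ alone. The dependence of $N$ on $\Coeff(B_V)$ involves the orbifold boundary coefficients $1-1/b_i$ in Proposition~\ref{prelim-prop-quotient-log-fano}, whose Seifert denominators $b_i$ depend on the chosen $\xi'$. This can be handled either by (i) carefully selecting $\xi'$ so that the $b_i$ lie in a finite set depending only on $n$ and $I$, or (ii) by observing that Theorem~\ref{thm:torus-comp} remains valid when $I$ is only a DCC set (since its proof rests on Birkar's boundedness of complements, which is valid in the DCC setting), and the Seifert denominators generate a DCC set controlled by $n$ and $I$, yielding the required uniform $N$.
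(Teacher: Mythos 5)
Your reduction to the quotient log Fano pair correctly identifies the crux of the matter --- the uniformity of $N$ --- but does not resolve it, and this is a genuine gap. The coefficients of $B_V = L_{\mathrm{orb}} + B'$ involve the Seifert denominators $b_i$, which are intrinsic to the $\GG_m$-action determined by $\xi'$ and can be arbitrarily large as $(X,B;\xi)$ ranges over all $n$-dimensional log Fano cone singularities with $\Coeff(B)\subset I$; there is no mechanism for choosing $\xi'$ so as to confine them to a finite set depending only on $n$ and $I$, so your fix (i) does not work. Fix (ii) rests on the claim that boundedness of complements (hence Theorem~\ref{thm:torus-comp}) holds for DCC coefficient sets, but Birkar's theorem is proved for hyperstandard sets $\Phi(\mathfrak{R})$ attached to a finite set of rationals $\mathfrak{R}$, not for arbitrary DCC sets, and Theorem~\ref{thm:torus-comp} as stated requires $I$ finite. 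While the quotient coefficients do in fact land in the hyperstandard set $\Phi(\{1-c : c\in I\}\cup\{1\})$, you neither state nor prove the required extension of Theorem~\ref{thm:torus-comp} to that setting, so the uniform $N$ is not actually obtained.

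The paper sidesteps this issue by a different maneuver. It uses the quotient $(V(\xi),B_V(\xi))$ only to produce \emph{some} $\TT$-equivariant $\QQ$-complement $\Gamma_1$ of $(X,B,o)$ with $E$ an lc place --- a step where no uniformity is needed, so the uncontrolled orbifold coefficients are harmless. It then passes to the projective orbifold-bundle compactification $\oX$ of $X$ over $V(\xi)$: by \cite[Lemma 3.3]{ZhuangboundednessII} the pair $(\oX,\oB)$ is an $n$-dimensional klt log Fano pair whose boundary coefficients still lie in $I$, it carries the $\QQ$-complement $\oGamma_1+V_\infty$ with $E$ an lc place, and applying Theorem~\ref{thm:torus-comp} (the implication from $\QQ$-complement to $\bT$-equivariant $N$-complement) to $(\oX,\oB)$ yields $N$ depending only on $n$ and $I$; restricting the resulting $N$-complement back to $X$ finishes the proof. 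To salvage your route you would have to prove a hyperstandard-coefficient, torus-equivariant version of Theorem~\ref{thm:torus-comp}; the compactification trick avoids this entirely.
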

\begin{proof}
    We first show that there exists a $\bT$-equivariant $\bQ$-complement $\Gamma_1$ of $(X, B, o)$ such that $E$ is an lc place of $(X,\Gamma_1)$. Since special test configurations are preserved after changing the Reeb vector field, we may replace $\xi$ by a primitive integer vector in the Reeb cone. Denote by $(V(\xi), B_V(\xi))$ the resulting log Fano pair via taking the quotient of $(X,B)$ by $\langle\xi\rangle$ by Proposition \ref{prelim-prop-quotient-log-fano}. Then $(V(\xi), B_V(\xi))$ admits a $\TT(\xi)\cong \GG_m^{r-1}$ action, and $E$ induces a special test configuration of $(V(\xi), B_V(\xi))$. Moreover, $\ord_E = (t\xi)*v_F$ for some $t>0$ and $v_F\in V(\xi)^{\an}$ a special divisorial valuation in the sense of~\cite{BLX}.
    By Theorem~\ref{thm:torus-comp}, there is a $\TT(\xi)$-invariant $\bQ$-complement $D_1$ of $(V(\xi), B_V(\xi))$ such that $v_F$ is an lc place of $(V(\xi), D_1)$. 
    Take $\Gamma_1:= B + \overline{\pi^* (D_1-B_V(\xi))}$ where $\pi: X\setminus\{o\}\to V(\xi)$ is the Seifert $\bG_m$-bundle.  Then we know that the quotient of $(X, \Gamma_1)$ by $\langle \xi\rangle$ is precisely $(V(\xi), D_1)$ which is an lc log Calabi-Yau pair. Thus $\Gamma_1$ is a $\TT$-invariant $\QQ$-complement of $(X,B; o)$, and $E$ is an lc place. 

    Next, we take the projective compactification $\oX$ of $X$ as a projective orbifold bundle over $V(\xi)$ (see e.g. \cite[Section 3.1]{ZhuangboundednessII}). Let $\oB$ and $\oGamma_1$ be the closure of $B$ and $\Gamma_1$ in $\oX$, respectively. Denote by $V_\infty$ the section at infinity of $\oX$. Thus by \cite[Lemma 3.3]{ZhuangboundednessII} we know that $(\oX, \oB)$ is a klt log Fano pair. Moreover, it has a $\bQ$-complement $\oGamma_1+V_\infty$ and an lc place $E$. Applying Theorem~\ref{thm:torus-comp} to $E$ over $(\oX, \oB)$, we know that there exists $N$ depending only on $n$ and $I$ and a $\bT$-equivariant $N$-complement $\oGamma$ of $(\oX, \oB)$ such that $E$ is an lc place of $(\oX, \oGamma)$. Thus taking $\Gamma:=\oGamma|_{X}$ finishes the proof.
\end{proof}

\section{A valuative criterion for K-semistability}\label{section: localval}
In this section, we briefly recall the valuative criterion studied in~\cite{HuangThesis} in our notation, and deduce a combinatorial criterion for checking K-semistability of toric varieties.

For $(X=\Spec R, B; \xi)$ a log Fano cone singularity, we define the \emph{volume} of a $\TT$-invariant valuation $v$ on $X$ to be the volume of the associated filtration $\cF_v$. More precisely, let $R_m\coloneqq \bigoplus_{\la\xi, \alpha\ra\leq m} R_\alpha$, and let $(a_{m, j}, 1\leq j\leq \dim R_m)$ be the jumping numbers of $\cF_v$ in $R_m$. Then 
\[
S(v;\xi)\coloneqq \lim_{m\to \infty} \frac{1}{m\dim R_m}\sum_j a_{m, j}.
\]
We refer the readers to~\cite{XuZhuang20, wu} for more details on this invariant.
\begin{defn}
    We set 
    \[\delta(X, B; \xi) =\frac{n}{(n+1)A(\xi)}\inf_v \frac{A_{(X, B)}(v)}{S(v;\xi)},\]
    where $v$ runs through all nontrivial $\TT$-invariant valuations in $X_0$ with finite log discrepancy.
\end{defn}

\begin{remark}
    We note that a similar volume invariant, which in this section we will denote by $S'(v;\xi)$, is defined in~\cite{HuangThesis} as the average of jumping numbers in a slightly different setting:
    \[S'(v;\xi)\coloneqq \lim_{m\to \infty} \frac{A(\xi)}{m\dim R_m'}\sum_j a_{m,j},\] where 
    $R_m'\coloneqq \bigoplus_{\la \xi, \alpha\ra\in (m-1, m]} R_\alpha$, and $a_{m, j}$'s are jumping numbers of $\cF_v$ in $R_m'$. 
    In \emph{loc.cit.} the corresponding $\delta$-invariant, which we will denote by $\delta'$, is defined by
    \[\delta'(X, B; \xi)\coloneqq \inf_{v} \frac{A_{(X, B)}(v)}{S'(v;\xi)},\]
    where $v$ runs through all $\TT$-invariant valuations centered at $o$ with finite log discrepancy.
\end{remark}
We first relate the two $\delta$-invariants.
\begin{prop}
    We have the following equality
    \[\delta'(X, B; \xi) = \min\{1, \delta(X, B; \xi)\}.\]
    Moreover, when $\delta(X, B; \xi)\leq 1$, there is a valuation in $X_0$ computing $\delta(X, B; \xi)$.
\end{prop}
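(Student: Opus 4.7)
The plan is to split the proof into two independent parts: first, derive the identity $\delta'(X,B;\xi) = \min\{1,\delta(X,B;\xi)\}$ via a direct comparison of the two averaged jumping numbers together with the cone structure on the space of $\TT$-invariant valuations; then invoke a local Liu-Xu-Zhuang-type theorem to extract a minimizer.

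For the comparison step, since $R_m = \bigoplus_{k\leq m} R_k'$ is a cumulative sum, the jumping numbers of $\cF_v$ on $R_m$ are the concatenation with multiplicity of the jumping numbers on each $R_k'$ for $k\leq m$. Using the Hilbert-type asymptotics $\dim R_m \sim \vol(\xi) m^n/n!$ and $\dim R_m' \sim \vol(\xi) m^{n-1}/(n-1)!$ coming from Theorem~\ref{chap3-thm-index-char}, together with the elementary summation $\sum_{k\leq m} k^n \sim m^{n+1}/(n+1)$, I would obtain
\[
S(v;\xi) = \frac{n}{(n+1)A(\xi)}\, S'(v;\xi)
\]
for every nontrivial $\TT$-invariant valuation $v$ with finite log discrepancy. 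This rewrites $\delta(X,B;\xi) = \inf_{v} A(v)/S'(v;\xi)$ (with $v$ nontrivial in $X_0$), placing $\delta$ and $\delta'$ on the same footing.

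Next I would exploit the cone structure on $\TT$-invariant valuations: every nontrivial $\TT$-invariant valuation $w$ centered at $o$ with finite log discrepancy can be written uniquely as $w = t\xi*v$ with $t>0$ and $v\in X_0$ (possibly trivial), where
\[
t = \inf_{\alpha\neq 0,\, f_\alpha\in R_\alpha\setminus\{0\}} \frac{w(f_\alpha)}{\langle\xi,\alpha\rangle} > 0,
\]
the positivity coming from Izumi-type estimates since $w$ has finite log discrepancy. Under this parameterization, Lemma~\ref{logdiscreptranslation} yields $A(t\xi*v) = A(v)+tA(\xi)$, and the shift $\cF_{t\xi*v}^{\lambda}R_\alpha = \cF_v^{\lambda-t\langle\xi,\alpha\rangle}R_\alpha$ on each weight piece gives $S'(t\xi*v;\xi) = S'(v;\xi)+tA(\xi)$ by a direct asymptotic computation. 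The ratio
\[
\frac{A(t\xi*v)}{S'(t\xi*v;\xi)} = \frac{A(v)+tA(\xi)}{S'(v;\xi)+tA(\xi)}
\]
is monotone in $t$, interpolating between $A(v)/S'(v;\xi)$ as $t\to 0^+$ and $1$ as $t\to\infty$. Taking $\inf_{t>0}$ gives $\min\{1, A(v)/S'(v;\xi)\}$, and infimizing further over $v$ yields the identity $\delta' = \min\{1,\delta\}$.

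For existence of a minimizer when $\delta\leq 1$, I would invoke the local analog of the Liu-Xu-Zhuang theorem developed in~\cite{HuangThesis}. Starting from a minimizing sequence $v_i\in X_0$ with $A(v_i)/S'(v_i;\xi)\to\delta$, boundedness of $\TT$-equivariant complements (Theorem~\ref{main-thm-complements}) combined with MMP-based Koll\'ar component extraction in the local setting produces a quasi-monomial $\TT$-invariant valuation $v\in X_0$ attaining $\delta$. This existence step is the main obstacle: Part~1 is essentially bookkeeping once $S$ and $S'$ have been related, whereas extracting an actual minimizer in the local setting requires the full MMP/boundedness machinery adapted to log Fano cone singularities, paralleling the LXZ argument in the global case.
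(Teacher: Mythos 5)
Your proposal has the same skeleton as the paper's proof: establish the scaling relation $S'(v;\xi)=\tfrac{(n+1)A(\xi)}{n}S(v;\xi)$, use the translation identities $A((t\xi)*v)=A(v)+tA(\xi)$ and $S'((t\xi)*v;\xi)=S'(v;\xi)+tA(\xi)$, observe that $t\mapsto\frac{A(v)+tA(\xi)}{S'(v;\xi)+tA(\xi)}$ is monotone with limits $A(v)/S'(v;\xi)$ at $t\to 0^+$ and $1$ at $t\to\infty$, and infimize over rays to get $\delta'=\min\{1,\delta\}$; the existence of a minimizer is in both cases delegated to \cite[Theorem 5.0.4]{HuangThesis}. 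The one genuine difference is how the scaling relation is obtained: the paper proves it for quasi-regular $\xi$ by passing to the quotient log Fano pair via \cite[Lemma 3.11]{wu} and then extends to irrational $\xi$ by continuity and homogeneity, whereas you do a direct asymptotic computation by concatenating jumping numbers. Your route is more self-contained, but as written it leans on $\dim R_m'\sim\vol(\xi)m^{n-1}/(n-1)!$, which does not follow from Theorem~\ref{chap3-thm-index-char} alone when $\xi$ is irrational (the slab counts $\dim R_m'$ require an equidistribution input). This is repairable without equidistribution: by Stolz--Ces\`aro it suffices to control $\sum_{k\le m}k\dim R_k'$, and Abel summation gives $\sum_{k\le m}k\dim R_k'=m\dim R_m-\sum_{k\le m-1}\dim R_k$, which only uses the cumulative asymptotics of $\dim R_m$. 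Two smaller remarks: your explicit cone decomposition $w=(t\xi)*v$ with $t>0$ controlled by an Izumi-type estimate makes precise a point the paper leaves implicit (the domains of the two infima are disjoint, so one genuinely needs every $\TT$-invariant valuation centered at $o$ to lie on such a ray); on the other hand, invoking Theorem~\ref{main-thm-complements} and MMP machinery for the existence step is more than is needed, as the paper simply quotes \cite[Theorem 5.0.4]{HuangThesis} there.
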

\begin{proof}
    For simplicity of notation, we will write $S(v)$ and $S'(v)$ for $S(v;\xi)$ and $S'(v;\xi)$. We first claim that 
    \[S'(v) = \frac{A(\xi)(n+1)}{n}S(v),\]
    for all $v\in X_0^{\TT}$. We remark that this in particular shows that $\delta(X, B; \xi)$ is the same as the one defined in~\cite{XuZhuang20}.
    Indeed, by~\cite[Lemma 3.11]{wu}, we have that when $\xi$ is rational and primitive, $S'(v) = A(\xi)\tilde{S}(\cF_v; L) = A(\xi)\frac{n+1}{n}S(v)$, where $A(\xi)L\sim_\QQ -K_{(V, B_V)}$, for $(V, B_V)$ the quotient log Fano pair. For $\xi$ general, the equality follows from continuity and homogeneity of both sides in $\xi$.
    The second observation is a translation property. Let $w=(t\xi)*v$. Then we have 
    \[S'(w) = S'(v)+tA(\xi).\] 
    This follows by noting $\cF_w^\lambda R_\alpha = \cF_v^{\lambda- t\la\xi, \alpha\ra} R_\alpha$ for each $\alpha\in \Lambda$, and so the jumping numbers shift by exactly $t\la\xi, \alpha\ra$.

    Now for any non-trivial $\TT$-invariant valuation $v$ in $X_0$, consider the function $f:[0,\infty)\to \RR$ defined by 
    \[f(t)\coloneqq \frac{A_{(X, B)}(v)+tA(\xi)}{S(v)+tA(\xi)} = \frac{A_{(X, B)}(w_t)}{S'(w_t)},\]
    where $w_t = (t\xi)*v$, and the equality follows from Lemma~\ref{logdiscreptranslation}. Note
    \[f'(t) = \frac{A(\xi)(S(v)-A_{(X, B)}(v))}{(S(v)+tA(\xi))^2}.\]
    Hence $f$ is decreasing if $S(v)<A_{(X, B)}(v)$; and increasing if $S(v)>A_{(X, B)}(v)$. 

    Thus if $\delta(X, B;\xi)<1$, then $f$ is increasing for any $v$ with $S(v)>A_{(X, B)}(v)$, and so $\delta'(X, B; \xi) = \delta(X, B; \xi)$; if $\delta(X, B;\xi)>1$, then $f$ in decreasing for any $v$, and in this case $\delta'(X, B; \xi)=1$ by taking $t\to \infty$. This proves the equality on $\delta$-invariants. Finally, it was shown in~\cite[Theorem 5.0.4]{HuangThesis} that if $\delta'(X, B; \xi)\le 1$, then there is a $\TT$-invariant valuation computing $\delta'(X, B; \xi)$, and the second assertion now directly follows from the equality.
\end{proof}

Thanks to the characterization of K-semistability through the minimizer of the normazlied volume function, it is shown in~\cite{HuangThesis} (see also~\cite{XuZhuang20}) that the $\delta$-invariant introduced above is a valuative criterion for K-semistability.
\begin{thm}[{\cite[Theorem 1.0.3]{HuangThesis}}]
    A log Fano cone singularity $(X, B; \xi)$ is K-semistable with respect to special test configurations if and only if $\delta(X, B;\xi)\geq 1$.
\end{thm}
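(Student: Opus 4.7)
My plan is to reduce the statement to the comparison between special test configurations and Koll\'ar components (Theorem~\ref{chap3-thm-local-tc-char}), and then translate K-semistability with respect to special test configurations into a Koll\'ar-component-valuative condition. The first step is to establish an algebraic formula
\[
\fut(\cX,\cB;\xi,\eta) = \frac{c(\xi,E)}{A(\xi)}\left(A_{(X,B)}(E) - \frac{(n+1)A(\xi)}{n}\,S(E;\xi)\right)
\]
for some positive constant $c(\xi,E)>0$, whenever $(\cX,\cB;\xi,\eta)$ is a special test configuration with associated Koll\'ar component $E$. To obtain this I would expand the index character $F(\cX_0;\xi,t)$ and weight character $C_\eta(\cX_0;\xi,t)$ using the basis adapted to the filtration $\cF_E$ defined by $E$, noting that the $\eta$-weights are (up to the canonical twist making $A(\eta)=0$) precisely the jumping numbers of $\cF_E$, so that $b_1(\cX_0;\xi)$ carries the log-discrepancy term $A_{(X,B)}(E)$ via adjunction on the Koll\'ar component, while the averaged jumping numbers assemble into $S(E;\xi)$. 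The translation-invariance and homogeneity in Remark~\ref{rmk-local-global-fut} make the choice of twist harmless.

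Given this formula, the easy direction is immediate: if $\delta(X,B;\xi)\geq 1$, then for every Koll\'ar component $E$,
\[
A_{(X,B)}(E) \geq \frac{(n+1)A(\xi)}{n}\,S(E;\xi),
\]
so $\fut\geq 0$ on all special test configurations by Theorem~\ref{chap3-thm-local-tc-char}. For the converse, assume $(X,B;\xi)$ is K-semistable with respect to special test configurations and suppose for contradiction that $\delta(X,B;\xi)<1$. The previous proposition gives a $\TT$-invariant valuation $v\in X_0$ achieving $\delta$, with $A_{(X,B)}(v)<\frac{(n+1)A(\xi)}{n}S(v;\xi)$. The plan is then to invoke the $\TT$-equivariant Xu--Zhuang-type finite generation / Koll\'ar component approximation results for local $\delta$-minimizers: the associated graded ring of $\cF_v$ on $R$ is finitely generated, so $v$ induces a weakly special degeneration and can be promoted to a Koll\'ar component $E$ (via running a $\bT$-equivariant MMP on the extraction of $v$, using boundedness of $\bT$-equivariant complements from Theorem~\ref{thm:torus-comp}) with $A_{(X,B)}(E)/S(E;\xi)\leq A_{(X,B)}(v)/S(v;\xi)$. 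The corresponding special test configuration then has $\fut<0$, contradicting K-semistability.

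The main obstacle is the last step: unlike the global case of~\cite{LX14}, there is no intersection-theoretic Futaki formula available locally (Remark~\ref{rmk-local-global-fut}), so one cannot directly compare the behavior of $A/S$ under each MMP step using the classical Fujita--Li comparison. One must instead carry out the $\bT$-equivariant MMP on the degeneration of $v$, monitor $A$ via the lc-threshold characterization and $S$ via the filtration description, and control the limit using the translation identity $S'((t\xi)*v)=S'(v)+tA(\xi)$ and the monotonicity analysis of $f(t)$ already established in the previous proposition. This is where the heavy input from normalized volume minimization and equivariant finite generation for $\delta$-minimizers enters, and is the reason the argument is genuinely harder than in the global setting.
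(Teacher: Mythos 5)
First, a point of comparison: the paper does not actually prove this statement --- it is quoted directly from \cite[Theorem 1.0.3]{HuangThesis}, and the surrounding text makes clear that the cited proof runs through minimization of the normalized volume $\nvol$ (K-semistability with respect to special test configurations is identified with $v_\xi$ minimizing $\nvol$ among $\TT$-invariant valuations, and the valuative inequality is extracted from the derivative of $\nvol$ along the rays $(t\xi)*v$). Your proposal deliberately bypasses $\nvol$ and follows the global Fujita--Li/\cite{LX14} template instead: a direct Futaki formula on special test configurations plus an MMP-type argument manufacturing a destabilizing Koll\'ar component. That is a genuinely different route. Your forward direction ($\delta\geq 1$ implies K-semistability on special test configurations) can be made to work, though more easily than you suggest: rather than re-expanding the index and weight characters, for rational $\xi$ the formula $\fut = c\,\bigl(A_{(X,B)}(E)-\tfrac{(n+1)A(\xi)}{n}S(E;\xi)\bigr)$ is the global Fujita--Li computation transported through Theorem~\ref{thm-local-global-fut} and Lemma~\ref{LogDiscrepancy}, and it extends to irrational $\xi$ by continuity of both sides in $\xi$. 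You also need the translation step explicitly: a Koll\'ar component is centered at $o$ and hence lies outside $X_0$, so $\delta\geq 1$ controls it only through the identity $\delta'=\min\{1,\delta\}$ and the monotonicity of $f(t)$; you use this only implicitly.

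The genuine gap is in the converse. From $\delta<1$ you take a $\TT$-invariant quasi-monomial minimizer $v$ and assert that ``$\TT$-equivariant Xu--Zhuang-type finite generation'' promotes it to a Koll\'ar component $E$ with $A_{(X,B)}(E)/S(E;\xi)\leq A_{(X,B)}(v)/S(v;\xi)$. Two problems. First, finite generation of $\mathrm{gr}_v R$ for a $\delta$-minimizer is itself a deep theorem whose local, $\TT$-equivariant, irrational-$\xi$ form is essentially of the same depth as the statement being proved; it does not follow from Theorem~\ref{thm:torus-comp}, which produces complements for divisors already known to induce special test configurations. Second, even granting a weakly special degeneration, the step ``run a $\TT$-equivariant MMP and show $A/S$ does not increase'' is exactly what fails to transplant from \cite{LX14}: as Remark~\ref{rmk-local-global-fut} notes and as you yourself observe, there is no intersection-theoretic formula for the local Futaki invariant with which to monitor each MMP step, and your proposed substitute (tracking $A$ by lc thresholds, $S$ by filtrations, plus the translation identity) is a restatement of the difficulty rather than an argument. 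This is precisely where the cited proof switches to the normalized volume: $A(w)<S'(w)$ forces $\nvol$ to decrease along the ray from $v_\xi$ toward $w$, non-minimization of $\nvol$ produces a Koll\'ar component of smaller normalized volume by Li--Xu, and convexity of $\nvol$ along such rays converts that into $\fut<0$ for the induced special test configuration. As written, your converse direction reduces the theorem to unproved inputs rather than proving it.
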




We now use this to give a criterion for K-semistability of toric varieties. This will recover a result in~\cite{BermanToric} via a different approach. We first set up some notation. Let $X=X_\sigma$ for some convex cone $\sigma\in \RR^n$ generated by primitive vectors $v_1, \cdots, v_d$. Then $X$ is $\QQ$-Gorenstein if and only if there is some $l\in \sigma^\vee\cap M_\QQ$ such that $\la v_i, l\ra=1$, see e.g.~\cite[Proposition 3.2]{BermanToric} for a proof. Since $\delta$ does not depend on the scaling of $\xi$, we will normalize $\xi$ by $\la \xi, l\ra=1$. Let $Q_\xi=\{u\in \sigma^\vee: \la\xi, u\ra\leq 1\}, P_\xi=\{u\in \sigma^\vee: \la\xi, u\ra= 1\}$.

\begin{thm}
    Let $(X; \xi)$ be an affine toric variety described above. Then 
    \[\delta(X; \xi) = \min_{i=1,\cdots, d} \frac{1}{\la v_i, \bar u^P\ra} = \frac{n}{n+1}\min_{i=1,\cdots, d}\frac{1}{\la v_i, \bar u^Q\ra},\]
    where $\bar u^P, \bar u^Q$ are barycenters of $P_\xi, Q_\xi$ respectively.
\end{thm}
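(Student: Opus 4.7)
The plan is to reduce the computation to a finite-dimensional linear-fractional optimization using the toric dictionary. Because $\TT$ is the full torus acting on $X=X_\sigma$, any nontrivial $\TT$-invariant valuation on $R$ is automatically additive on characters, and the ones with finite log discrepancy are exactly the toric valuations $v_w$ with $v_w(\chi^u)=\langle w,u\rangle$ for some $w\in\sigma\setminus\{0\}$. The standard toric formula gives $A_{X}(v_w)=\langle w,l\rangle$, so in particular $A(\xi)=\langle \xi,l\rangle=1$ under the chosen normalization. Since the quotient $A(v_w)/S(v_w;\xi)$ is homogeneous of degree zero in $w$, the normalization condition $v_w\in X_0$ only selects a scale in the ray through $w$ and is irrelevant for the infimum.

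The next step is to compute $S(v_w;\xi)$. Each weight piece $R_\alpha$ is spanned by a single character $\chi^\alpha$, so $R_m$ has a basis indexed by $\sigma^\vee\cap M\cap mQ_\xi$, and the jumping number of the filtration $\cF_{v_w}$ on the line $\kk\cdot\chi^u$ is precisely $\langle w,u\rangle$. A Riemann-sum estimate then gives
\[
\sum_j a_{m,j}\sim m^{n+1}\int_{Q_\xi}\langle w,u\rangle\,du,\qquad \dim R_m\sim m^n\vol(Q_\xi),
\]
so $S(v_w;\xi)=\langle w,\bar u^Q\rangle$. Substituting into the definition of $\delta$ yields
\[
\delta(X;\xi)=\frac{n}{n+1}\inf_{w\in\sigma\setminus\{0\}}\frac{\langle w,l\rangle}{\langle w,\bar u^Q\rangle}.
\]

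By homogeneity I would then restrict the infimum to the polytope $\{w\in\sigma:\langle w,l\rangle=1\}$, whose vertices are exactly the primitive generators $v_i$ (by the very relation $\langle v_i,l\rangle=1$). Minimizing $\langle w,\bar u^Q\rangle^{-1}$ on this slice is equivalent to maximizing the linear form $w\mapsto \langle w,\bar u^Q\rangle$, whose maximum over a polytope is attained at a vertex. This produces $\delta(X;\xi)=\frac{n}{n+1}\min_i\langle v_i,\bar u^Q\rangle^{-1}$, which is the second equality. To pass to the first equality, parameterize $Q_\xi=\{tu:u\in P_\xi,\ t\in[0,1]\}$; the Jacobian of $(t,u)\mapsto tu$ along this cone-slice decomposition is $t^{n-1}$, so
\[
\vol_n(Q_\xi)=\tfrac{1}{n}\vol_{n-1}(P_\xi),\qquad \int_{Q_\xi}u\,du=\tfrac{1}{n+1}\vol_{n-1}(P_\xi)\,\bar u^P,
\]
hence $\bar u^Q=\tfrac{n}{n+1}\bar u^P$, and substituting converts the $\bar u^Q$-formula into the $\bar u^P$-formula.

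No genuine obstacle arises: the key conceptual input is the observation that full-torus invariance forces $v$ to be a toric valuation $v_w$, after which everything reduces to linear programming on the cone $\sigma$ and a classical cone/slice integration relating the two barycenters. The step requiring the most care is simply confirming that the Riemann-sum identification $S(v_w;\xi)=\langle w,\bar u^Q\rangle$ holds for \emph{all} $w\in\sigma\setminus\{0\}$ and not just interior $w$; this follows because both sides are continuous and linear in $w$.
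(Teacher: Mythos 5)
Your proposal is correct and follows essentially the same route as the paper: compute $S(v_w;\xi)=\la w,\bar u^Q\ra$ by a lattice-point/Riemann-sum argument, relate the barycenters via $\bar u^P=\tfrac{n+1}{n}\bar u^Q$, and minimize the resulting linear-fractional function on the slice $\{\la w,l\ra=1\}$ of $\sigma$, where the extremum is attained at the generators $v_i$. One small inaccuracy: the condition $v_w\in X_0$ means $v_w(\fm)=0$, i.e.\ $w\in\partial\sigma$, not a choice of scale on the ray through $w$ --- but this is harmless, since the infimum over all of $\sigma\setminus\{0\}$ is attained at the $v_i$, which already lie on $\partial\sigma$.
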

\begin{proof}
    First note that $\TT$-invariant valuations in $X_0$ correspond to vectors on the boundary of $\sigma$. Let $v$ be a toric valuation centered on $X$, that is $v\in \sigma$. Then
    \[S_m(v) = \frac{1}{m\#(mQ_\xi\cap M)}\sum_{u\in mQ_\xi\cap M} \la v, u\ra = \la v, \bar u_m^Q\ra,\]
    where $\bar u_m^Q$ denotes the barycenter of $Q_\xi\cap m^{-1}M$.
    Thus \[S(v) = \la v, \bar u^Q\ra.\]
    A similar argument shows that
    \[A(\xi)^{-1}S'(v) = \la v, \bar u^P\ra.\] 
    One immediately sees that the factor $\frac{n+1}{n}$ relating $S$ and $S'$ comes from the fact that $\bar u^P = \frac{n+1}{n}\bar u^Q$. It thus suffices to prove the first equality.
    
    Note also that $A(\xi) = \la l, \xi\ra=1$. We have 
    \[\delta(X;\xi) = \inf_{v\in \partial \sigma}\frac{\la v, l\ra}{\la \xi, l\ra\la v, \bar u^P\ra} = \min_{i=1, \cdots, d} \frac{\la v_i, l\ra}{\la v_i, \bar u^P\ra} =\min_{i=1, \cdots, d} \frac{1}{\la v_i, \bar u^P\ra}. \]
\end{proof}
\begin{cor}[compare {\cite[Proposition 2.4]{BermanToric},~\cite[Corollary 7.17]{BlumJonsson}}]
    A toric log Fano cone singularity $(X; \xi)$ with $\la \xi, l\ra=1$ is K-semistable if and only if $\bar u^P=l$.
\end{cor}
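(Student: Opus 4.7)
The plan is to combine the computation of $\delta$ from the preceding theorem with Theorem \ref{main-thm-stability-equivalence} (K-semistability is equivalent to $\delta \geq 1$) and then translate the inequality $\langle v_i, \bar u^P\rangle \leq 1$ into a statement about the cones $\sigma$ and $\sigma^\vee$.

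First, by Theorem \ref{main-thm-stability-equivalence}, $(X;\xi)$ is K-semistable if and only if $\delta(X;\xi)\geq 1$. By the preceding theorem this is equivalent to
\[
\langle v_i, \bar u^P\rangle \leq 1 \quad \text{for all } i=1,\ldots,d.
\]
Using the $\QQ$-Gorenstein normalization $\langle v_i, l\rangle = 1$, this inequality can be rewritten as $\langle v_i, l - \bar u^P\rangle \geq 0$ for every $i$. Since the primitive generators $v_i$ span the cone $\sigma$, this is precisely the condition $l - \bar u^P \in \sigma^\vee$.

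Next, I would observe two facts about $l-\bar u^P$: first, $\bar u^P \in P_\xi$ by convexity, so $\langle \xi, \bar u^P\rangle = 1 = \langle \xi, l\rangle$, giving $\langle \xi, l-\bar u^P\rangle = 0$; second, $\xi$ is a Reeb field, hence lies in the interior of $\sigma$, so $\langle \xi, u\rangle > 0$ for every nonzero $u\in \sigma^\vee$. Therefore, assuming K-semistability, $l - \bar u^P \in \sigma^\vee$ together with $\langle \xi, l-\bar u^P\rangle = 0$ forces $l - \bar u^P = 0$, i.e., $\bar u^P = l$.

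The converse is immediate: if $\bar u^P = l$, then $\langle v_i, \bar u^P\rangle = \langle v_i, l\rangle = 1$ for every $i$, so $\delta(X;\xi)=1$ by the previous theorem, and hence $(X;\xi)$ is K-semistable. There is no serious obstacle here — the only thing to be careful about is confirming that $\xi\in\mathrm{int}(\sigma)$ (which is the definition of the Reeb cone given $\Lambda$ generates $\sigma^\vee\cap M$) and that $\bar u^P$ genuinely lies on the slice $\{\langle\xi,\cdot\rangle=1\}$, both of which are immediate from the definitions.
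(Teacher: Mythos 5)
Your proof is correct and follows essentially the same route as the paper: both reduce to the formula $\delta(X;\xi)=\min_i \langle v_i,\bar u^P\rangle^{-1}$ from the preceding theorem together with the equivalence of K-semistability and $\delta\geq 1$, and both rest on the convex-geometric fact that $l-\bar u^P\in\sigma^\vee$ combined with $\langle \xi, l-\bar u^P\rangle=0$ and $\xi\in\mathrm{int}(\sigma)$ forces $\bar u^P=l$. The paper states this more tersely (asserting $\delta\leq 1$ with equality only when $\bar u^P=l$), while you spell out the justification it leaves implicit.
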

\begin{proof}
    Since $\delta(X; \xi) = \min_{i=1, \cdots, d} \frac{1}{\la v_i, \bar u^P\ra}$, we have $\delta(X; \xi)\leq 1$, with equality achieved only when $\bar u^P=l$.
\end{proof}

\section{Non-Archimedean functionals}\label{section: NAfunctional}
We now introduce the non-Archimedean functionals needed for characterizing local K-stability. Most of the functionals are defined using mixed Monge--Amp\`ere measures developed in~\cite{wu}. Throughout, for a fixed test configuration $(\cX, \xi, \eta)$, we will write $\chi_\eta$ for the NA norm corresponding to the filtration defined by the test configuration, and the associated FS function is $\varphi=\fs(\xi, \chi_\eta)$.

\subsection{The non-Archimedean Mabuchi functional}
We first recall the following definition of the Monge--Amp\`ere energy for a FS function $\varphi$, which agrees with the one in~\cite{wu} up to a multiple of $A(\xi)$.
\begin{defn}
The non-Archimedean Monge--Amp\`ere energy $E^{\na}$ is defined by
\[E^{\na}(\varphi) = \frac{A(\xi)}{n}\sum_{i=0}^{n-1}\int_{X^{\an}\setminus\{0\}} \varphi \ma(\varphi^{[i]}, \varphi_{\xi}^{[n-1-i]})\]
where
\[\ma(\varphi^{[i]}, \varphi_{\xi}^{[n-1-i]}):=\frac{1}{\vol(\xi)}  (d'd''\varphi)^j\wedge (d'd''\varphi_\xi)^{n-j-1}\wedge d'd''\varphi_\xi^+,\]
and we will write
$\ma(\varphi):= \ma(\varphi^{[n-1]})$ for the unmixed Monge--Amp\`ere measure.
\end{defn}

The following explains the choice of this normalization in the context of log Fano cone singularities. 
\begin{lem}\label{normalization}
Suppose $\varphi$ comes from a test configuration $(\mathcal{X}, \mathcal{B}; \xi, \eta)$ and $\xi$ is quasi-regular. Let $(\mathcal{V}, \mathcal{B}_\cV; \cL)$ be the corresponding test configuration for the quotient log Fano pair $(V, B_V) = ((X, B)\setminus\{o\})/\la \xi\ra$ inducing a non-Archimedean metric $\phi$ on $-(K_V+B)$. Then
\[E^{\na}(\varphi) = E^{\na}(\phi).\]
\end{lem}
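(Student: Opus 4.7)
The plan is to reduce the identity to the correspondence between $\xi$-FS functions on $X^{\an}$ and non-Archimedean metrics on $(V, \cL)$ established in \cite{wu} under the quasi-regular assumption, then match the local and global Monge--Amp\`ere data term by term.

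First, by Theorem~\ref{chap3-thm-tc-comparison}, the test configuration $(\cX, \cB; \xi, \eta)$ corresponds to an ample test configuration $(\cV, \cB_\cV; \cL)$ for $(V, B_V; L)$. Under this correspondence, and using the identification $X_0^{\TT} \subset V^{\an}$, the FS function $\varphi = \fs(\xi, \chi_\eta)$ restricts to a NA metric $\tilde\phi$ on $\cL$, and similarly $\varphi_\xi$ restricts to the trivial reference NA metric $\tilde\phi_0$ on $\cL$. Since $-(K_V+B_V) = A(\xi)\cL$, the induced NA metric on $-(K_V+B_V)$ is exactly $\phi = A(\xi)\tilde\phi$, and the reference metric on $-(K_V+B_V)$ is $\phi_0 = A(\xi)\tilde\phi_0$. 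This step should be essentially a repackaging of the construction in \cite{wu}.

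Next, I would verify that the local mixed Monge--Amp\`ere measures match, as probability measures, the normalized global mixed Monge--Amp\`ere measures. Specifically, I would show that for each $0 \leq i \leq n-1$,
\[
\ma(\varphi^{[i]}, \varphi_\xi^{[n-1-i]}) = \mathrm{MA}_{\mathrm{prob}}(\tilde\phi^{[i]}, \tilde\phi_0^{[n-1-i]}),
\]
as probability measures supported on $X_0^{\TT}\subset V^{\an}$. This follows from the construction of the local Monge--Amp\`ere operator in \cite{wu}, where the transverse factor $d'd''\max\{\varphi_\xi, 0\}$ normalized by $\vol(\xi)^{-1}$ implements the quotient from the $n$-dimensional $X^{\an}$ to the $(n-1)$-dimensional $V^{\an}$, together with the invariance of $\mathrm{MA}_{\mathrm{prob}}$ under rescaling of the metric by $A(\xi)$. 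Lemma~\ref{MeasureTranslation} and the quasi-regularity of $\xi$ ensure that the measures can be computed in compatible coordinates.

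Finally, combining these two steps,
\[
\int_{X^{\an}\setminus\{o\}} \varphi \,\ma(\varphi^{[i]}, \varphi_\xi^{[n-1-i]}) = \int_{V^{\an}} \tilde\phi \,\mathrm{MA}_{\mathrm{prob}}(\tilde\phi^{[i]}, \tilde\phi_0^{[n-1-i]}),
\]
so that multiplying by $A(\xi)/n$ and summing over $i$, the left-hand side is $E^{\na}(\varphi)$, while the right-hand side scaled by $A(\xi)$ becomes the standard global energy $E^{\na}(\phi)$ of the metric $\phi = A(\xi)\tilde\phi$ on $-(K_V+B_V)$. The main obstacle will be carefully tracking scaling: the interplay between $A(\xi)$, the normalization $\vol(\xi)^{-1}$ in the local Monge--Amp\`ere, and the multiplicative rescaling between metrics on $\cL$ and $-(K_V+B_V)$ must line up so that the factor $A(\xi)/n$ appearing in the local definition matches the global normalization $1/n$ exactly; once all probability measures are lined up this reduces to the observation that $\phi = A(\xi)\tilde\phi$ and the energy is linear in the integrand $\varphi$ when the mixed Monge--Amp\`ere is normalized as a probability measure.
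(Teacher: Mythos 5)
Your proposal is correct and follows essentially the same route as the paper: pass to the quotient log Fano pair, identify the local and global (probability) Monge--Amp\`ere data via the correspondence from \cite{wu}, and absorb the discrepancy between $\cL$ and $-(K_V+B_V)$ into the factor $A(\xi)$ using the homogeneity of the energy. The paper simply cites \cite[Corollary 5.14]{wu} for the measure matching (which you sketch re-deriving) and makes explicit the reduction to primitive $\xi$ followed by degree-$0$ homogeneity in $\xi$, a step you should state since the quotient construction is set up for primitive Reeb vectors.
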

\begin{proof}
Let $\tilde{\phi}=A(\xi)^{-1}\phi$ be the metric on $L\sim_\QQ -A(\xi)^{-1}(K_V+B_V)$ induced by the fiberwise quotient of $(\cX, \cB;\xi, \eta)$.
When $\xi$ is the fiber coordinate of $L$, i.e., when $\xi$ is primitive, we have 
\[ E^{\na}(\varphi) = A(\xi)E^{\na}(\tilde{\phi}) = E^{\na}(A(\xi)\tilde{\phi)} = E^{\na}(\phi),\] 
where the fist equality follows from~\cite[Corollary 5.14]{wu}.
Note that the left hand side is independent of the scaling of $\xi$, we can conclude.
\end{proof}

In a similar manner, we define the other functionals using the same normalization.
\begin{defn}
The non-Archimedean functionals $I^{\na}, J^{\na}$ for $\varphi\in \cH(\xi)$ are defined by 
\[I^{\na}(\varphi)\coloneqq A(\xi)\int_{X^{\an}} \varphi \mathrm{MA}(\varphi_\xi)
    - A(\xi)\int_{X^{\an}} \varphi \mathrm{MA}(\varphi),\]
and
\[J^{\na}(\varphi)\coloneqq A(\xi)\int_{X^{\an}} \varphi \mathrm{MA}(\varphi_\xi) - E^{\na}(\varphi).\]
\end{defn}

\begin{defn}
Let $\varphi\in\cH(\xi)$. The \emph{non-Archimedean entropy functional} is 
\[H^{\na}(\varphi):= \int_{X^{\an}} A(v) \mathrm{MA}(\varphi), \]
and we define the \emph{non-Archimedean Mabuchi energy} by
\[M^{\na}(\varphi):= H^{\na}(\varphi)-(I^{\na}-J^{\na})(\varphi).\]
\end{defn}

\begin{prop}\label{chap5-prop-local-global-mabuchi}
If $\varphi$ corresponds to a FS metric $\phi$ on $-(K_V+B_V)$ for a log Fano variety $(V, B_V)$, whose cone is $(X, B; \xi)$, then we have
\[I^{\na}(\varphi) = I^{\na}(\phi), \
  J^{\na}(\varphi) = J^{\na}(\phi), \
  H^{\na}(\varphi) = H^{\na}(\phi),\]
and
\[M^{\na}(\varphi) = M^{\na}(\phi)\]
\end{prop}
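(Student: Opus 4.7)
The plan is to derive each of the four identities from a single mixed-measure compatibility that refines the proof of Lemma~\ref{normalization}. By Theorem~\ref{chap3-thm-tc-comparison} and homogeneity, I may assume $\xi$ is primitive integral, so that the quotient log Fano pair $(V,B_V)$ is genuine, $\phi$ is the FS metric on $-(K_V+B_V)$ induced by the test configuration $(\cV,\cB_\cV;\cL)$, and Lemma~\ref{normalization} applies. The underlying geometric input is the Seifert $\GG_m$-bundle $\pi:X\setminus\{o\}\to V$, together with \cite[Lemma~5.11]{wu} which identifies $X_0^{\TT}$ with a subset of $V^{\an}$ and \cite[Corollary~5.14]{wu} which compares the local and global MA operators on such potentials.

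The key step is to establish the mixed analogue of the energy identity in Lemma~\ref{normalization}: for every $0\le i\le n-1$,
\[
A(\xi)\int_{X^{\an}}\varphi\,\ma(\varphi^{[i]},\varphi_\xi^{[n-1-i]})
\;=\;\int_{V^{\an}}\phi\,\ma(\phi^{[i]},\phi_{\triv}^{[n-1-i]}).
\]
Assuming this, specializing to $i=n-1$ and $i=0$ yields
\[
A(\xi)\!\int_{X^{\an}}\!\varphi\,\ma(\varphi)=\int_{V^{\an}}\!\phi\,\ma(\phi),\qquad
A(\xi)\!\int_{X^{\an}}\!\varphi\,\ma(\varphi_\xi)=\int_{V^{\an}}\!\phi\,\ma(\phi_{\triv}),
\]
and subtracting gives $I^{\na}(\varphi)=I^{\na}(\phi)$; combining the second identity with Lemma~\ref{normalization} gives $J^{\na}(\varphi)=J^{\na}(\phi)$. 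For the entropy, recall from Section~\ref{section: prelim} that $\ma(\varphi)$ is a finite sum of Dirac masses at quasi-monomial $\TT$-invariant valuations in $X_0^{\TT}$; Lemma~\ref{LogDiscrepancy} then identifies $A_{(X,B)}(v)=A_{(V,B_V)}(v)$ on the support, so
\[
H^{\na}(\varphi)=\int_{X^{\an}}A_{(X,B)}(v)\,\ma(\varphi)=\int_{V^{\an}}A_{(V,B_V)}(v)\,\ma(\phi)=H^{\na}(\phi),
\]
using the unmixed case of the displayed formula. The Mabuchi identity $M^{\na}(\varphi)=M^{\na}(\phi)$ then follows from its definition $M^{\na}=H^{\na}-(I^{\na}-J^{\na})$.

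The main obstacle is the mixed compatibility statement itself. Lemma~\ref{normalization} extracts only the symmetrized sum (the energy), whereas here I need the individual mixed terms to match. I expect this to follow by the same argument used for \cite[Corollary~5.14]{wu}: on the Seifert bundle $\pi$, the local reference form $d'd''\varphi_\xi$ pushes forward to the global reference form (up to the $A(\xi)$ factor that reflects the difference $\phi=A(\xi)\tilde\phi$ between the metric on $-K_V-B_V$ and the induced metric on $L$), and $d'd''\varphi$ similarly matches $d'd''\phi$; combining via multilinearity of wedge products and the projection formula yields the mixed identity term by term. The only subtlety is checking that these manipulations are valid for currents of the type produced by FS potentials (not just smooth forms), which is where one invokes the smooth approximation in the proof of \cite[Corollary~5.7]{wu} and passes to the limit.
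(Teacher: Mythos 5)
Your proposal is correct and follows essentially the same route as the paper: reduce to primitive $\xi$, transfer each functional through the quotient log Fano pair via the local--global comparison of Monge--Amp\`ere measures on the Seifert $\GG_m$-bundle (which the paper simply outsources to \cite[Proposition 5.13]{wu} instead of re-deriving term by term), and conclude for general $\xi$ by degree-zero homogeneity. The one imprecision is in the entropy step: there you need equality of the measures $\ma(\varphi)$ and $\ma(\phi)$ under the identification $X_0^{\TT}\subset V^{\an}$, not merely of the pairings $\int\varphi\,\ma(\cdot)$ appearing in your displayed identity, though the current-level pushforward argument you sketch does deliver that stronger statement.
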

\begin{proof}
This is a consequence of~\cite[Proposition 5.13]{wu}, and follows from a similar argument as in Lemma~\ref{normalization}. Indeed, when $\xi$ is primitive, we have
\[F^{\na}(\varphi) = F^{\na}(A(\xi)\varphi)= F^{\na}(\phi)\]
for $F\in \{I, J, H, M\}$.
The identities then follow by noting the the left hand side is homoegenous in $\xi$ of degree $0$.
\end{proof}
Part of Theorem~\ref{thm-main-cts} is the following.
\begin{thm}\label{chap5-thm-mabuchi-cts}
    Fix a norm $\chi$ of finite type. For any $F\in \{E, I, J, H, M\}$, the function
    \[\xi\mapsto F^{\na}(\fs(\xi, \chi))\]
    is continuous in the Reeb cone.
\end{thm}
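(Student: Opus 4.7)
The plan is to reduce continuity of each functional to the combination of two ingredients: (a) weak convergence of the underlying (mixed) Monge--Amp\`ere measures as $\xi$ varies, and (b) convergence of the corresponding integrands. Fix $\xi_i \to \xi$ in the Reeb cone and set $\varphi^i := \fs(\xi_i, \chi)$, $\varphi := \fs(\xi, \chi)$. The key preliminary observation is that
\[
\varphi^i \;=\; \max_j \frac{\log|f_j| + \chi(f_j)}{\langle \xi_i, \alpha_j\rangle},
\]
so since the finitely many positive denominators $\langle\xi_i,\alpha_j\rangle$ converge to $\langle\xi,\alpha_j\rangle$, $\varphi^i \to \varphi$ uniformly on any subset of $X^{\an}\setminus\{o\}$ on which the finitely many functions $\log|f_j|$ are bounded. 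Pulling back along a $\TT$-equivariant log resolution $\pi:(Y,D)\to X$ as in the proof of Lemma~\ref{Aconvergence}, this yields uniform convergence of $(\pi^{\an})^*\varphi^i$ to $(\pi^{\an})^*\varphi$ on any compact subset of $Y^{\an}$, including the compact set $K\subset\mathrm{QM}(Y,D)$ that eventually contains the supports of all the relevant measures.

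For $E^{\na}$, $I^{\na}$, $J^{\na}$: by multilinearity each is a finite linear combination of terms of the form $A(\xi_i)\int \psi^i\,\ma((\varphi^i)^{[j]},\varphi_{\xi_i}^{[n-1-j]})$, where $\psi^i\in\{\varphi^i,\varphi_{\xi_i}\}$. The argument of Lemma~\ref{ctsMAsupp}, which rests on~\cite[Lemma 2.11]{NAPP}, extends from unmixed to mixed Monge--Amp\`ere measures by multilinearity of the $d'd''$-operator and yields weak convergence of these measures. The compactness argument in the proof of Lemma~\ref{Aconvergence} (Step~1) likewise generalizes, confining all the measures to a common compact subset of $\mathrm{QM}(Y,D)$. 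Combined with the uniform convergence of the integrands on this compact set and continuity of the linear prefactor $A(\xi_i)\to A(\xi)$, each such integral passes to the limit, hence so do $E^{\na}$, $I^{\na}$, $J^{\na}$.

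For the entropy $H^{\na}(\fs(\xi_i,\chi))=\int A(v)\,\ma(\varphi^i)$, the integrand $A$ is only lower semicontinuous on $X^{\an}$, but this is precisely the situation resolved by Lemma~\ref{Aconvergence} (in the unmixed case $t=0$), giving the desired convergence. Continuity of $M^{\na}=H^{\na}-I^{\na}+J^{\na}$ then follows by linearity.

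The main obstacle is the failure of continuity of the log-discrepancy function $A$ on $X^{\an}$, which would otherwise prevent passage to the weak limit in $H^{\na}$. This has already been absorbed into Lemma~\ref{Aconvergence}: its compact-support argument confines all the measures to a cell in $\mathrm{QM}(Y,D)$ on which $A$ is continuous by~\cite{JM12}. The remaining technical points---extending the weak convergence of Lemma~\ref{ctsMAsupp} to mixed Monge--Amp\`ere measures and verifying that the supports of the mixed measures lie in the same compact set $K$---are direct adaptations of the arguments already carried out in Section~\ref{section: prelim}.
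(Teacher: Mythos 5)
Your overall strategy coincides with the paper's: continuity of $E^{\na},I^{\na},J^{\na}$ from weak convergence of the mixed Monge--Amp\`ere measures (Lemma~\ref{ctsMAsupp}), continuity of $H^{\na}$ from Lemma~\ref{Aconvergence}, and $M^{\na}$ by linearity. For the first group you are in fact more careful than the paper: since the integrands $\varphi^i$ vary with $i$, weak convergence of the measures alone does not suffice, and your fix (uniform convergence $\varphi^i\to\varphi$ on a common compact set containing all the supports, which is what \cite[Lemma 2.11]{NAPP} supplies, together with continuity of the prefactors $A(\xi_i)$, $\vol(\xi_i)$) is correct and makes explicit a step the paper leaves implicit. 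Note also that Lemma~\ref{ctsMAsupp} is already stated for the sums of mixed measures $\mu^i_t$, so no separate ``extension by multilinearity'' is needed.

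The one genuine divergence is the entropy term. The paper does \emph{not} apply Lemma~\ref{Aconvergence} at $t=0$: it first pushes the measures off the NA link by writing, for $t>0$,
\[
H^{\na}(\fs(\xi_i,\chi))+tA(\xi_i)=\int A\bigl((t\xi_i)*v\bigr)\,\mu^i_0=\int A(v)\,\mu^i_t ,
\]
using Lemma~\ref{MeasureTranslation} ($\mu_t=(t\xi)_*\mu_0$) and Lemma~\ref{logdiscreptranslation} ($A((t\xi)*v)=tA(\xi)+A(v)$), then applies Lemma~\ref{Aconvergence} at that fixed $t>0$ and subtracts $tA(\xi_i)\to tA(\xi)$. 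Your direct invocation at $t=0$ is formally licensed by the statement of Lemma~\ref{Aconvergence}, which allows $t\in\RR_{\geq 0}$; but the authors' deliberate detour through $t>0$ signals that $t=0$ is the delicate case, since there the supports lie on the boundary $X_0=\{\varphi_\xi=0\}$ and the compact-containment argument of Step~1 of Lemma~\ref{Aconvergence} (solving the tropical linear systems inside $\mathrm{QM}(Y,D)$ for the chosen resolution of the $\fm$-primary ideal) is least transparent. To make your proof airtight you should either verify that Step~1 genuinely covers $t=0$, or adopt the translation trick, which costs only the two elementary lemmas already at hand and avoids the boundary case entirely.
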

\begin{proof}
    For $F\in \{E, I, J\}$, this is a consequence of the continuity of mixed Monge--Amp\`ere measures as shown in Corollary~\ref{ctsMAsupp}. 

    For the entropy functional, this is a consequence of Lemma~\ref{logdiscreptranslation}, Lemma~\ref{MeasureTranslation} and Lemma~\ref{Aconvergence}. Indeed, combining these lemmas and writing in the notation thereof, we have
    \begin{align*}
    H^{\na}(\fs(\xi_i, \chi))+tA(\xi_i) &= \int_{X^{\an}} A((t\xi_i)*v)\mu_0^i = \int_{X^{\an}} A(v)\mu_t^i \\
    &\to \int_{X^{\an}} A(v)\mu_t = H^{\na}(\fs(\xi, \chi))+tA(\xi)
    \end{align*}
    for any $t>0$ and $\xi_i\to \xi$.
    Finally the continuity of the Mabuchi functionals follows by combining the continuity properties of $H, I, J$.
\end{proof}

\subsection{The non-Archimedean Ding functional}
\begin{defn}
Fix $\varphi\in\cH(\xi)$. We define the non-Archimedean functional 
\[L^{\na}(\varphi) = \inf_v \{A_{(X,D)}(v) + A(\xi)\varphi(v)\},\]
where $v$ runs over all $\mathbb{T}$-invariant quasi-monomial valuations in $X_0$.
The \emph{non-Archimedean Ding functional} is defined by
\[D^{\na}(\varphi) := L^{\na}(\varphi) - E^{\na}(\varphi).\]
\end{defn}

\begin{remark}
We remark that this definition of $L^{\na}$ stays the same if we let $v$ run through all $\mathbb{T}$-invariant quasi-monomial valuations centered on $X$. Indeed, any such $w$ can be written as $w= \lambda \xi *v$ for some $\lambda \geq 0, v\in X_0^\TT.$ Then by Lemma~\ref{logdiscreptranslation}, we have that $A(w) = \lambda A(\xi)+A(v)$, and also note that $A(\xi)\varphi(w) = A(\xi)\varphi(v)-\lambda A(\xi)$. Thus we only have to take the inf over $X_0.$
\end{remark}
\begin{prop}\label{chap5-prop-local-global-ding}
Assume $\varphi$ corresponds to a FS metric $\phi$ on a log Fano pair $(V, B_V)$, whose cone is $(X, B; \xi)$. Then we have 
\[L^{\na}(\varphi) = L^{\na}(\phi).\]
\end{prop}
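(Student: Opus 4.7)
The plan is to follow the same template used to prove Lemma~\ref{normalization} and Proposition~\ref{chap5-prop-local-global-mabuchi}: reduce to the case of a primitive Reeb field by a homogeneity/scaling argument, and then verify the identity of the two infima term-by-term using Lemma~\ref{LogDiscrepancy} together with the identification $X_0^{\TT} \subset V^{\an}$ from \cite[Lemma 5.11]{wu}.

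More precisely, first I would check that both sides of the claimed equality are independent of scaling $\xi$ by a positive rational number: on the cone side, $A(\xi)$ is linear and $\varphi = \fs(\xi,\chi_\eta)$ scales like $\xi^{-1}$ by inspection of the denominators $\langle \xi, \alpha\rangle$ in its defining formula, so $A(\xi)\varphi$ is homogeneous of degree zero; the log discrepancy $A_{(X,B)}(v)$ on $X_0^{\TT}$ is independent of the scaling of $\xi$. On the global side, rescaling $\xi$ by $l \in \bN$ does not change the quotient pair $(V, B_V)$ or the line bundle on which $\phi$ is a metric (only the Seifert $\bG_m$-bundle structure is rescaled). Hence, after rescaling, we may assume $\xi = \hat{\xi} \in N$ is primitive.

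Assuming $\xi$ primitive, the proof would proceed as follows. By \cite[Lemma 5.11]{wu}, the natural inclusion $X_0^{\TT} \hookrightarrow V^{\an}$ identifies the $\TT$-invariant quasi-monomial valuations on $X$ centered in $\{v(\fm)=0\}$ with quasi-monomial valuations on $V$. Under this identification, Lemma~\ref{LogDiscrepancy} gives $A_{(X,B)}(v) = A_{(V,B_V)}(v)$. It remains to match the potential terms: I would show that the FS function $A(\xi)\varphi|_{X_0^{\TT}}$ coincides with $\phi$ viewed as a function on $V^{\an}$ (via the difference with the trivial reference). This is exactly the content that went into Lemma~\ref{normalization}: the relation $A(\xi)\tilde{\phi} = \phi$ together with the fact that the filtration $\chi_\eta$ on $R = \bigoplus_{m}R_m = \bigoplus_m H^0(V, \cO_V(\lfloor mL\rfloor))$ coming from the cone test configuration is precisely the filtration associated to the global test configuration on $L$. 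Combining these three ingredients yields
\[
L^{\na}(\varphi) \;=\; \inf_{v \in X_0^{\TT}} \bigl[A_{(X,B)}(v) + A(\xi)\varphi(v)\bigr] \;=\; \inf_{v \in V^{\an}} \bigl[A_{(V,B_V)}(v) + \phi(v)\bigr] \;=\; L^{\na}(\phi).
\]

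The three ``hard'' inputs (identification of valuation spaces, equality of log discrepancies, and matching of FS potentials via the Rees correspondence of Theorem~\ref{chap3-thm-tc-comparison}) are already in hand, so the only thing that needs a bit of care is making sure the two infima are indeed taken over the same class of test valuations — in particular, that one does not lose generality by restricting to $\TT$-invariant quasi-monomial valuations on the cone side, and that these correspond exactly to the valuations used in the global $L^{\na}(\phi)$. Since both $L^{\na}$ functionals are defined variationally over quasi-monomial valuations and $A_{(X,B)}(w)$ computed at $w = (t\xi)*v$ recovers global data by Lemma~\ref{logdiscreptranslation}, no new valuations contribute. The main (mild) obstacle is therefore purely bookkeeping: writing out the correspondence of filtrations and verifying that $A(\xi)\varphi(v) = \phi(v)$ pointwise on $X_0^{\TT} = V^{\an}$ when $\xi$ is primitive, which is the exact analogue of the computation in Lemma~\ref{normalization}.
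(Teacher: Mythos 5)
Your proposal is correct and follows essentially the same route as the paper's proof: reduce to primitive $\xi$ via degree-zero homogeneity of $A(\xi)\varphi$, identify $A(\xi)\varphi$ with $\phi$ as in Lemma~\ref{normalization}, and conclude with Lemma~\ref{LogDiscrepancy}. The extra bookkeeping you flag about matching the classes of valuations is handled in the paper by the remark following the definition of $L^{\na}$, so nothing further is needed.
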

\begin{proof}
The proof is again similar to the proof of Lemma~\ref{normalization}. 
 When $\xi$ is primitive, we have that $\varphi = \phi'$ for some NA metric $\phi'$ on $L\sim_\QQ -A(\xi)^{-1}(K_V+B_V)$. Thus in this case $A(\xi)\varphi = A(\xi)\phi' = \phi$ is a NA metric on $-(K_V+B)$. In general, note that $A(\xi)\varphi$ is homogeneous in $\xi$ of degree $0$. Hence we have $A(\xi)\varphi = \phi$ for any multiple of the primitive vector. The result then follows from Lemma~\ref{LogDiscrepancy}.
\end{proof}

\begin{prop}\label{chap5-prop-L-cts}
For a fixed norm $\chi$, the map 
$\xi\mapsto L^{\na}(\fs(\xi, \chi))$
is continuous.
\end{prop}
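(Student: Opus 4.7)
The plan is to rewrite $L^{\na}(\fs(\xi,\chi))$ as an infimum of a jointly continuous function over a compact set of quasi-monomial valuations on a fixed log smooth model depending only on $\chi$; continuity in $\xi$ will then follow by a standard uniform-convergence argument. Following the setup of Lemma~\ref{Aconvergence}, let $\{f_{\alpha_j}\}_{j=1}^k$ be a finite collection of $\TT$-eigenfunctions generating an $\fm$-primary ideal and determining $\chi$, set $\chi_j := \chi(f_{\alpha_j})$, and fix a $\TT$-equivariant log smooth model $\pi:(Y,D)\to X$ resolving $(f_{\alpha_j})$ and containing the strict transform of $B$, so that locally $\pi^* f_{\alpha_j} = u_j\prod_i z_i^{\ell_i(j)}$ with each $u_j$ a unit. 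On each face of $\QM(Y,D)$ both $A_{(X,B)}$ and $v\mapsto v(f_{\alpha_j})$ are linear in the coordinates $v(z_i)$.

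The first step is to show that the infimum defining $L^{\na}(\fs(\xi,\chi))$ is realized on $\QM(Y,D)\cap X_0^{\TT}$, via the retraction $r_Y:Y^{\an}\to \QM(Y,D)$. For any $\TT$-invariant quasi-monomial $v$ in $X_0$, one has $A_{(X,B)}(r_Y(v))\leq A_{(X,B)}(v)$ by lower semicontinuity of log discrepancies~\cite{JM12}, while $\fs(\xi,\chi)(r_Y(v))=\fs(\xi,\chi)(v)$ because $\fs(\xi,\chi)$ depends on $v$ only through the values $v(\pi^*f_{\alpha_j})=\sum_i \ell_i(j)v(z_i)$, which are unchanged under retraction (each $u_j$ being a unit at the center of $v$). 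Moreover $r_Y(v)\in X_0^{\TT}$ since $r_Y(v)(f)\leq v(f)$ for every regular function $f$. Hence retracting never increases $A_{(X,B)}+A(\xi)\fs(\xi,\chi)$, and the infimum reduces to the one over $\QM(Y,D)\cap X_0^{\TT}$. For coercivity, since $(f_{\alpha_j})$ is $\fm$-primary and $v(\fm)=0$ for $v\in X_0^{\TT}$, there must exist some $j$ with $v(f_{\alpha_j})=0$, yielding the uniform lower bound
\[
\fs(\xi,\chi)(v)\geq \min_j \frac{\chi_j}{\langle\xi,\alpha_j\rangle}
\]
on $X_0^{\TT}$. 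On any relatively compact subset $U\subset\ft_\RR^+$ of the Reeb cone, the infimum of $A_{(X,B)}+A(\xi)\fs(\xi,\chi)$ is thus attained inside the compact set $K:=\{v\in\QM(Y,D)\cap X_0^{\TT}:A_{(X,B)}(v)\leq M\}$ for some $M$ depending only on $U$.

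Finally, the integrand $(\xi,v)\mapsto A_{(X,B)}(v)+A(\xi)\fs(\xi,\chi)(v)$ is jointly continuous on $U\times K$: $A_{(X,B)}$ is continuous on $\QM(Y,D)$ by~\cite{JM12}, $A(\xi)$ is linear in $\xi$, and $\fs(\xi,\chi)(v)=\max_j(\chi_j-v(f_{\alpha_j}))/\langle\xi,\alpha_j\rangle$ is manifestly continuous in both variables on $U\times K$. Taking the infimum over the compact set $K$ then preserves continuity in $\xi$. The hard part will be justifying the retraction step carefully in our local setting, especially verifying that $r_Y$ is compatible with the NA link $X_0$ and that $\fs(\xi,\chi)$ is truly retraction-invariant as claimed; once this combinatorial reduction is in place, the rest is a routine equicontinuity argument. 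An alternative route would be to extract an explicit algebraic formula expressing $L^{\na}(\fs(\xi,\chi))$ as a finite minimum over divisorial valuations on $(Y,D)$, which would make continuity transparent; either way, the underlying combinatorial data are the same.
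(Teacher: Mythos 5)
Your proof is correct in strategy, but it takes a genuinely different and considerably heavier route than the paper's. The paper's argument is five lines: upper semicontinuity of $\xi\mapsto L^{\na}(\fs(\xi,\chi))$ is immediate since it is an infimum of functions continuous in $\xi$, and lower semicontinuity follows from the fact (quoted from~\cite[Lemma 2.11]{NAPP}, the same lemma used for Lemma~\ref{ctsMAsupp}) that $\fs(\xi_j,\chi)\to\fs(\xi,\chi)$ \emph{uniformly} on $X_0$ as $\xi_j\to\xi$; since the entropy term $A(v)$ does not depend on $\xi$, the infimum commutes with the uniform limit and one is done. Your route instead localizes the infimum onto a compact subset of $\QM(Y,D)\cap X_0^{\TT}$ for a fixed $\TT$-equivariant model monomializing the generators of $\chi$, and then invokes joint continuity over a compact set. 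This is more work but buys more: it shows the infimum is actually attained on a fixed finite simplicial set independent of $\xi$, it avoids appealing to the uniform-convergence lemma from~\cite{NAPP}, and it is very much in the spirit of the paper's own proof of Lemma~\ref{Aconvergence} (same resolution of the $\fm$-primary ideal $(f_{\alpha_j})$, same appeal to continuity of $A$ on $\QM(Y,D)$ from~\cite{JM12}); it also anticipates the explicit divisorial formula for $L^{\na}$ that the paper proves later for test-configuration norms (note that formula itself is derived \emph{using} this proposition, so it cannot be used as an input here without circularity). The one step you rightly flag as delicate is the retraction: you need the model to monomialize every $f_{\alpha_j}$ so that $r_Y$ preserves $\fs(\xi,\chi)$ exactly (a bare retraction only satisfies $r_Y(v)(f)\le v(f)$, which would move $\fs$ in the \emph{wrong} direction), and you need $r_Y(v)$ to remain $\TT$-invariant, which requires the $\TT$-equivariance of $(Y,D)$ and a short argument that retraction along $\TT$-invariant strata preserves $\TT$-invariance. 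With those points written out, the proof stands.
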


\begin{proof}
It is clear from the definition that $L^{\na}$ is upper-semicontinuous in $\xi$, since $\fs(\xi, \chi)$ is continuous in $\xi$. It suffices to show the other direction. Fix $\varepsilon>0$. By homogenity of $A(\xi)\varphi$, we only need to prove the result for normalized $\xi$.
For a sequence $\xi_j\to \xi$ of normalized Reeb fields, by~\cite[Lemma 2.11]{NAPP} we can find $j\gg 0$ such that $|\varphi_j -\varphi|<\frac{\varepsilon}{n}$ on $X_0$, where $\varphi_j = \fs(\xi_j, \chi)$, and $\varphi = \fs(\xi, \chi)$. Fix such a $j$, and take a sequence $v_m$ such that $\lim_{m\to\infty}(A(v_m)+n\varphi_j(v_m)) = L^{\na}(\varphi_j)$.
Now 
\[\lim_{m\to \infty} (A(v_m)+n\varphi_j(v_m))\geq \liminf_{m\to \infty} (A(v_m)+n\varphi(v_m) - \varepsilon)\geq L^{\na}(\varphi) - \varepsilon.\]
This shows the reverse inequality for any $\varepsilon>0$. The proof is complete letting $\varepsilon\to 0$.
\end{proof}
As a consequence, we have the remaining part of Theorem~\ref{thm-main-cts}.
\begin{cor}\label{chap5-cor-ding-cts}
For a fixed norm $\chi$, $D^{\na}(\fs(\xi,\chi))$ is continuous in $\xi$.
\end{cor}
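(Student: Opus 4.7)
The statement is an immediate consequence of the two continuity results already established just before it, so the plan is essentially to assemble them. By definition
\[
D^{\na}(\fs(\xi,\chi)) = L^{\na}(\fs(\xi,\chi)) - E^{\na}(\fs(\xi,\chi)),
\]
so continuity of $\xi \mapsto D^{\na}(\fs(\xi,\chi))$ on the Reeb cone reduces to continuity of the two summands separately.

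First, Proposition~\ref{chap5-prop-L-cts} has just been proved and gives continuity of $\xi \mapsto L^{\na}(\fs(\xi,\chi))$ directly. Second, Theorem~\ref{chap5-thm-mabuchi-cts} asserts that $\xi \mapsto F^{\na}(\fs(\xi,\chi))$ is continuous for $F\in\{E,I,J,H,M\}$, and in particular for $F=E$; so $\xi \mapsto E^{\na}(\fs(\xi,\chi))$ is continuous.

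Combining these two facts, $D^{\na}(\fs(\xi,\chi))$ is a difference of two continuous functions of $\xi$, hence continuous. There is no real obstacle here — the work has been done in establishing Proposition~\ref{chap5-prop-L-cts} (continuity of $L^{\na}$ via the upper semicontinuity argument together with the uniform comparison $|\fs(\xi_j,\chi)-\fs(\xi,\chi)|<\varepsilon/n$ from~\cite[Lemma 2.11]{NAPP}) and in the entropy continuity inside Theorem~\ref{chap5-thm-mabuchi-cts} (which relied on Lemmas~\ref{logdiscreptranslation}, \ref{MeasureTranslation}, \ref{Aconvergence}). The corollary is just the formal combination.
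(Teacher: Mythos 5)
Your proposal is correct and matches the paper's own proof, which likewise deduces the corollary by combining Theorem~\ref{chap5-thm-mabuchi-cts} (for the $E^{\na}$ term) with Proposition~\ref{chap5-prop-L-cts} (for the $L^{\na}$ term) via the decomposition $D^{\na}=L^{\na}-E^{\na}$. No gaps.
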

\begin{proof}
    This is a consequence of Theorem~\ref{chap5-thm-mabuchi-cts} and Proposition~\ref{chap5-prop-L-cts}.
\end{proof}

\begin{proof}[Proof of Theorem \ref{thm-main-cts}]
The theorem follows from the combination of Proposition~\ref{chap5-prop-local-global-mabuchi}, Theorem~\ref{chap5-thm-mabuchi-cts}, Proposition~\ref{chap5-prop-local-global-ding} and Corollary~\ref{chap5-cor-ding-cts}.
\end{proof}
Moreover, the continuity results further yield a comparison between NA functionals and the Futaki invariant defined in~\cite{CSIrregular}.
\begin{cor}\label{chap5-cor-mabuchi-ding-comp}
    When $\varphi$ comes from a normal test configuration $(\mathcal{X}, \mathcal{D}, \xi,\eta)$, that is $\varphi = \fs(\xi, \chi_\eta)$, where $\chi_\eta$ denotes the norm induced by the test configuration, we have 
    \[D^{\na}(\varphi)\leq M^{\na}(\varphi)\leq \mathrm{Fut}(\mathcal{X}, \mathcal{D}, \xi,\eta),\]
    with equality on weakly special test configurations.
\end{cor}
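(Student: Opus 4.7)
The plan is to reduce to the log Fano pair case at rational Reeb fields and then extend to the whole Reeb cone by continuity. Throughout we fix the underlying test configuration data $(\cX, \cB, \eta)$ and vary $\xi$, noting that the associated finite-type norm $\chi_\eta$ is defined in terms of $\eta$ alone and so is independent of $\xi$.

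First, for $\xi \in \ft_\QQ^+$: by Theorem~\ref{chap3-thm-tc-comparison}, the data $(\cX, \cB; \xi, \eta)$ correspond to a normal test configuration $(\cV, \cB_\cV; \cL)$ of the quotient log Fano pair $(V, B_V; L)$, inducing an FS metric $\phi$ on $-(K_V + B_V)$. By Propositions~\ref{chap5-prop-local-global-mabuchi} and~\ref{chap5-prop-local-global-ding} we have $M^{\na}(\varphi) = M^{\na}(\phi)$ and $D^{\na}(\varphi) = D^{\na}(\phi)$, and by Theorem~\ref{thm-local-global-fut} we have $\fut(\cX, \cB; \xi, \eta) = \fut(\cV, \cB_\cV; \cL)$. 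The sought inequalities $D^{\na}(\phi) \leq M^{\na}(\phi) \leq \fut(\cV, \cB_\cV; \cL)$, together with equality when the test configuration is weakly special (equivalently, by Theorem~\ref{chap3-thm-tc-comparison}, when the local test configuration is weakly special), are standard for log Fano pairs from~\cite{BHJ17}. This settles the rational case.

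Second, for $\xi$ arbitrary: pick a sequence $\xi_i \in \ft_\QQ^+$ with $\xi_i \to \xi$, using that rational Reeb fields are dense in $\ft_\RR^+$. For each $i$ the inequality chain holds at $\xi_i$ by the previous paragraph. As $i \to \infty$, the NA Mabuchi and Ding terms converge to $M^{\na}(\varphi)$ and $D^{\na}(\varphi)$ by Theorem~\ref{chap5-thm-mabuchi-cts} and Corollary~\ref{chap5-cor-ding-cts}, while the Futaki invariant converges to $\fut(\cX, \cB; \xi, \eta)$ because the coefficients $a_0, a_1, b_0, b_1$ appearing in Definition~\ref{defn-local-fut} are smooth functions of $\xi$ by Theorem~\ref{chap3-thm-index-char}. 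Passing to the limit gives the inequality chain at $\xi$. For the equality claim, observe that being weakly special is a property of $(\cX, \cB)$ alone and is independent of $\xi$, so the equalities at each $\xi_i$ persist in the limit.

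The main obstacle in this plan is bookkeeping rather than a substantive new computation: one must track the normalization factor $A(\xi)$ carefully through the local-versus-global identifications (this is already handled by Lemma~\ref{normalization} and Propositions~\ref{chap5-prop-local-global-mabuchi},~\ref{chap5-prop-local-global-ding}), and then invoke the continuity results of Section~\ref{section: NAfunctional} which are the genuine work underlying the extension from rational to irrational polarizations. Once these are in hand, the corollary reduces transparently to the known global assertion.
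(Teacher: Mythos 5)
Your proposal is correct and follows essentially the same route as the paper: reduce to the global statement of \cite[Proposition 7.32]{BHJ17} at rational $\xi$ via the local-global comparison results, then extend to irrational $\xi$ by the continuity of $M^{\na}$, $D^{\na}$, and the Futaki invariant in the Reeb field. Your version is merely more explicit about the convergence of the Futaki invariant (via the smoothness of the index character coefficients), which the paper leaves implicit.
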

\begin{proof}
    If $\xi$ is rational, then it is a consequence of~\cite[Proposition 7.32]{BHJ17}, Proposition~\ref{chap5-prop-local-global-mabuchi} and Proposition~\ref{chap5-prop-local-global-ding}. In general, it follows from continuity results Theorem~\ref{chap5-thm-mabuchi-cts} and Proposition~\ref{chap5-prop-L-cts}.
\end{proof}

\begin{cor}\label{Dingstab}
Assume $\varphi =\fs(\xi, \chi_\eta)$ comes from a $\QQ$-Gorenstein test configuration $(\cX, \cB; \xi, \eta)$ with $A(\eta)=0$. Then the above function $\xi\mapsto L^{\na}(\fs(\xi, \chi_\eta))$ is constant. 
\end{cor}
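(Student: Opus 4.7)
The plan is a two-step reduction: first pass from arbitrary Reeb fields to rational ones via continuity, then invoke the valuative formulation of $L^{\na}$ together with a canonical test valuation coming from the test configuration to obtain an algebraic expression that is manifestly $\xi$-independent.

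For the first step, I would invoke Proposition~\ref{chap5-prop-L-cts}, which shows that $\xi \mapsto L^{\na}(\fs(\xi, \chi_\eta))$ is continuous on $\ft_\RR^+$. Since $\ft_\QQ^+$ is dense, it suffices to establish constancy on rational Reeb fields. For the second step, I use Theorem~\ref{chap3-thm-local-tc-char} to associate a $\TT$-invariant Koll\'ar-type divisor $E$ over $X$ with the $\QQ$-Gorenstein test configuration; after possibly translating by a positive multiple of $\xi$ (using the remark following the definition of $L^{\na}$), one arranges that the Rees construction identifies the filtration $\chi_\eta$ with the valuation $v_E$, i.e., $\chi_\eta(f_\alpha) = v_E(f_\alpha)$ for all homogeneous $f_\alpha \in R_\alpha$. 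The paper's sign convention $\log|f_\alpha|(v) = -v(f_\alpha)$ yields the key cancellation
\[
\fs(\xi, \chi_\eta)(v_E) = \max_\alpha \left\{\frac{-v_E(f_\alpha) + \chi_\eta(f_\alpha)}{\la\xi, \alpha\ra}\right\} = 0,
\]
so plugging $v_E$ into the infimum defining $L^{\na}$ yields the $\xi$-independent upper bound $L^{\na}(\fs(\xi, \chi_\eta)) \leq A_{(X, B)}(v_E)$. For the matching lower bound in the rational case, I would reduce to the global theory via Theorem~\ref{chap3-thm-tc-comparison} and Proposition~\ref{chap5-prop-local-global-ding}: on the quotient log Fano pair $(V(\xi), B_V(\xi))$, the global $L^{\na}$ of a $\QQ$-Gorenstein test configuration is known to be realized at the valuation coming from the central fiber (see e.g.~\cite[\S 7]{BHJ17}), and Lemma~\ref{LogDiscrepancy} together with Lemma~\ref{logdiscreptranslation} identifies this value with $A_{(X, B)}(v_E)$ upstairs. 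Combined with the first step, this gives constancy on all of $\ft_\RR^+$.

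The main obstacle is the careful alignment of conventions and normalizations. One must verify that under the hypothesis $A(\eta) = 0$, the Rees construction indeed identifies $\chi_\eta$ with $v_E$ (rather than with a Reeb-twisted version), and that the log-discrepancy identifications via Lemma~\ref{LogDiscrepancy} are compatible across different choices of $\xi$. The role of $A(\eta) = 0$ is precisely to pin down the canonical $\bG_m$-lift so that $\chi_\eta$ is a genuine valuational filtration on $R$ (as opposed to one differing by a linear shift along the Reeb direction), which ensures the cancellation $-v_E(f_\alpha) + \chi_\eta(f_\alpha) = 0$ in the formula above. Once this bookkeeping is in place, the resulting upper bound and the global-reduction lower bound collapse to the same $\xi$-independent quantity $A_{(X, B)}(v_E)$, and constancy follows.
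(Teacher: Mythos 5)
Your first step (continuity via Proposition~\ref{chap5-prop-L-cts} plus density of $\ft_\QQ^+$) is exactly the paper's reduction. For the rational case, however, the paper does something much shorter: it simply cites \cite[Proposition A.14]{LWX} to get $L^{\na}(\fs(\xi,\chi_\eta)) = \lct(\cX,\cB;\cX_0)-1$, whose right-hand side is manifestly $\xi$-independent, and stops there. What you propose instead is essentially a proof of the paper's \emph{later} theorem in Section~\ref{section: NAfunctional} (the formula $L^{\na}(\cX,\cB;\xi,\eta)=\min_i\{a_i^{-1}A_{(X,B)}(\ord_{E_i})\}$), and as written your sketch has real gaps at precisely the points where that theorem's proof does serious work.

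Concretely: (a) the divisor $E$ furnished by Theorem~\ref{chap3-thm-local-tc-char} is a Weil divisor $E=\sum_i a_iE_i$ that need not be prime (the central fiber of a $\QQ$-Gorenstein test configuration can be reducible), so ``the valuation $v_E$'' and the target value $A_{(X,B)}(v_E)$ are not well defined; the filtration is $\chi_\eta=\min_i a_i^{-1}\ord_{E_i}$, your cancellation only gives $\varphi(a_i^{-1}\ord_{E_i})\le 0$ rather than $=0$, and the correct $\xi$-independent quantity is $\min_i a_i^{-1}A_{(X,B)}(\ord_{E_i})$. (b) Your lower bound ``realized at the valuation coming from the central fiber'' hides the hard part: one must match the components $F_i$ of the quotient central fiber $\cV_0$ with the components $E_i$ of $E$, verify $a_i^{-1}\ord_{E_i}=b_i^{-1}(t_i\xi)*r_V(\ord_{F_i})$ with $t_i=\ord_{F_i}(D)>0$, and separately treat the case $\ord_{F_i}(D)\le 0$ by a further twist; the paper spends a page on this. (c) Applying Theorem~\ref{chap3-thm-local-tc-char} requires $\eta$ in the Reeb cone of $\TT\times\GG_m$, and your proposed fix --- translating $\eta$ by a positive multiple of $\xi$ --- shifts $L^{\na}$ by $cA(\xi)$, which \emph{does} depend on $\xi$ (and also changes $E$), so the $\xi$-independence you are trying to establish does not survive this twist without an explicit cancellation argument (compare the bookkeeping in the corollary following that theorem). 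Finally, $A(\eta)=0$ does not make $\chi_\eta$ ``valuational''; it is the normalization identifying $\eta$ with the canonical lift so that the quotient polarization is $-K_{(V,B_V)}$, which is what makes the lct formula of \cite{LWX} apply without correction terms. The cleanest repair of your argument is simply to replace your step two by that citation.
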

\begin{proof}
    If $\xi$ is rational, then 
    \[L^{\na}(\fs(\xi, \eta)) = \mathrm{lct}(\mathcal{X}, \mathcal{B}; \mathcal{X}_0)-1\]
    by~\cite[Proposition A.14]{LWX}. The result then follows from Proposition~\ref{chap5-prop-L-cts} since the right hand side does not depend on $\xi$.
\end{proof}
\begin{remark}
    We remark that this corollary is not surprising. In fact, from the point of view of weighted cscK metrics, Sasaki-Einstein metrics should correspond to a weighted cscK metric on a fixed polarized pair, with weights depending on the Reeb fields, as explained in~\cite[Section 3.5]{Lahdili} and~\cite[Section 2.4]{Li21}. On the other hand, the works~\cite{Hanli, BLXZ, Li21} on the YTD conjecture for weighted K\"ahler-Ricci solitons suggest that the non-Archimedean $L$ functional does not depend on the weights, and hence should be independent of the Reeb fields. 
\end{remark}
Based on the above remark, it is natural to ask the following question:
\begin{question}
    For a fixed norm $\chi$, is $\xi\mapsto L^{\na}(\fs(\xi, \chi))$ a constant function on the Reeb cone?
\end{question}
As we now explain, the answer to this question is yes when $\chi$ comes from either of the following:
\begin{itemize}
    \item a normal test configuration $(\cX, \cB; \xi, \eta)$ with $\eta$ in the Reeb cone;
    \item a $\QQ$-Gorenstein test configuration.
\end{itemize}
Recall that by Theorem~\ref{chap3-thm-local-tc-char}, a normal test configuration $(\cX, \cB; \xi, \eta)$ with $\eta$ in the Reeb cone corresponds to a Weil divisor $E=\sum_i a_i E_i\subset Y\xrightarrow[]{\pi} X$ with integral coefficients, and $-E$ ample.
\begin{thm}
    Notation as above. If $(\cX, \cB+\cX_{0, \rm red})$ is sub-lc, then we have
    \[L^{\na}(\cX, \cB; \xi, \eta) = \min_i \{a_i^{-1}A_{(X, B)}(\ord_{E_i})\}.\]
    In particular, let 
    $\{v_i\}\subset X_0^\TT$ be valuations such that $a_i^{-1}\ord_{E_i} =(s_i\xi)*v_i$ for some $s_i>0$. Then
    \[L^{\na}(\cX, \cB; \xi, \eta) = \min_i \{A_{(X, B)}(v_i)+A(\xi)\varphi(v_i)\}.\]
\end{thm}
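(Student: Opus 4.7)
The plan is to combine a shift identity for the FS function with continuity of $L^{\na}$ and the local-global comparison in the rational Reeb case. From the defining formula
\[
\varphi(v) = \fs(\xi,\chi_\eta)(v) = \max_{\alpha}\frac{-v(f_\alpha)+\chi_\eta(f_\alpha)}{\langle\xi,\alpha\rangle}
\]
and the rule $(c\xi*v)(f_\alpha) = v(f_\alpha)+c\langle\xi,\alpha\rangle$, one reads off $\varphi(c\xi*v) = \varphi(v) - c$ for every $c\ge 0$. Combined with Lemma~\ref{logdiscreptranslation}, the functional $v\mapsto A_{(X,B)}(v) + A(\xi)\varphi(v)$ is invariant under the $\xi$-shift $v\mapsto c\xi*v$ for $c\ge 0$; setting $w_i := a_i^{-1}\ord_{E_i} = (s_i\xi)*v_i$, this gives
\[
A_{(X,B)}(v_i) + A(\xi)\varphi(v_i) = A_{(X,B)}(w_i) + A(\xi)\varphi(w_i).
\]

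Next, we claim $\varphi(w_i) = 0$ for each $i$. By the Rees construction underlying Theorem~\ref{chap3-thm-local-tc-char}, the filtration $\chi_\eta$ is cut out by the valuations $\{w_j\}$, with $\chi_\eta(f) = \min_j w_j(f)$ and generators $f_\alpha$ of $\chi_\eta$ arranged so that each $w_i$ realizes the minimum on some $f_\alpha$. The inequality $\chi_\eta(f_\alpha)\le w_i(f_\alpha)$ gives $\varphi(w_i)\le 0$, and the equality case yields $\varphi(w_i)\ge 0$. Therefore $A_{(X,B)}(v_i) + A(\xi)\varphi(v_i) = A_{(X,B)}(w_i) = a_i^{-1}A_{(X,B)}(\ord_{E_i})$, which yields the upper bound $L^{\na}(\varphi) \leq \min_i\{a_i^{-1}A_{(X,B)}(\ord_{E_i})\}$ and shows that the two equalities in the statement are equivalent.

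For the matching lower bound, we reduce to the rational Reeb case via continuity. Proposition~\ref{chap5-prop-L-cts} gives continuity of $\xi\mapsto L^{\na}(\fs(\xi,\chi_\eta))$, while $\min_i\{a_i^{-1}A_{(X,B)}(\ord_{E_i})\}$ is visibly $\xi$-independent. It thus suffices to verify the identity on the dense rational subcone, and by homogeneity in $\xi$ we may take $\xi$ primitive. Theorem~\ref{chap3-thm-tc-comparison} then identifies $(\cX,\cB;\xi,\eta)$ with a test configuration $(\cV,\cB_\cV;\cL)$ of the log Fano quotient $(V,B_V)$, whose central-fiber components correspond to the local divisors $E_i$ with the same coefficients $a_i$. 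Proposition~\ref{chap5-prop-local-global-ding} gives $L^{\na}(\varphi) = L^{\na}(\phi)$, and the classical global formula (e.g., \cite[Prop.~7.32]{BHJ17}) combined with Lemma~\ref{LogDiscrepancy} yields $L^{\na}(\phi) = \min_i\{a_i^{-1}A_{(X,B)}(\ord_{E_i})\}$ in this case.

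The main obstacle is making precise the identification of central-fiber components and their multiplicities under the local-global correspondence, together with the explicit Rees-style description of $\chi_\eta$ needed for the vanishing $\varphi(w_i)=0$. Once these identifications are in place, the theorem follows from the cited continuity, shift, and local-global results.
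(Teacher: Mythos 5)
Your overall route coincides with the paper's: reduce to rational $\xi$ via the continuity of Proposition~\ref{chap5-prop-L-cts} together with the visible $\xi$-independence of $\min_i a_i^{-1}A_{(X,B)}(\ord_{E_i})$, pass to the quotient test configuration of Theorem~\ref{chap3-thm-tc-comparison}, and invoke the global formula for $L^{\na}$ from \cite{BHJ17}. The difficulty is that the step you set aside as ``the main obstacle'' is precisely the content of the paper's proof, and it is not a routine bookkeeping exercise. The global formula computes $L^{\na}(\phi)$ as a minimum over the components $F_j$ of $\cV_0$, with multiplicities $b_j$, of $A_{(V,B_V)}(b_j^{-1}r_V(\ord_{F_j}))+\phi(b_j^{-1}r_V(\ord_{F_j}))$. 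To convert this into $\min_i a_i^{-1}A_{(X,B)}(\ord_{E_i})$ one must: compare the two descriptions of the filtration $\cF^\lambda R$ (one through $(t_j\xi)*r_V(\ord_{F_j})$ with $t_j=\ord_{F_j}(D)$, one through $a_i^{-1}\ord_{E_i}$); show that $\cV_0$ and $E$ have the same number of irreducible components, which the paper deduces from the $\TT\times\GG_m$-invariance of the components of $\cX_0$ and the parallel $\Proj$/$\Spec$ descriptions of $\cV_0$, $E$ and $\cX_0$; and then match them to obtain $a_i^{-1}\ord_{E_i}=b_i^{-1}(t_i\xi)*r_V(\ord_{F_i})$. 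In particular the multiplicities $a_i$ of $E$ and $b_i$ of $\cV_0$ are \emph{not} ``the same coefficients'' as you assert, and the identification only makes sense when every $t_i>0$; the paper must treat the case $\ord_{F_i}(D)\le 0$ separately by twisting the filtration by $p\cV_0$ and replacing the weighted blowup accordingly, a case your proposal does not address at all.

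A second, more localized gap: your claim that $\varphi(w_i)=0$ for \emph{every} $i$ rests on the unproven assertion that each $w_i$ attains $\chi_\eta(f_\alpha)=\min_j w_j(f_\alpha)$ on some generator $f_\alpha$ of the chosen finite generating set; essentiality of the components $E_i$ does not immediately give this, and you offer no argument. Fortunately the theorem does not require it: only $\varphi(w_i)\le 0$ is needed for the upper bound $L^{\na}(\varphi)\le\min_i A_{(X,B)}(w_i)$, and once the first displayed identity is proved, the second follows from the squeeze
\[
\min_i A_{(X,B)}(w_i)\;\ge\;\min_i\bigl\{A_{(X,B)}(w_i)+A(\xi)\varphi(w_i)\bigr\}\;\ge\;L^{\na}(\varphi)\;=\;\min_i A_{(X,B)}(w_i),
\]
which is exactly how the paper argues. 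Your shift-invariance computation for $v\mapsto A_{(X,B)}(v)+A(\xi)\varphi(v)$ under $v\mapsto (c\xi)*v$ is correct and matches the paper's use of Lemma~\ref{logdiscreptranslation}; it is the component identification in the rational case that remains to be supplied before the proof is complete.
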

\begin{proof}
    First assume $\xi$ rational and primitive. Denote by $(V, B_V)$ the corresponding quotient log Fano pair, and $(\cV, \cB; \cL)$ the test configuration for $(V, B_V; L=-A(\xi)^{-1}(K_V+B_V))$ induced by the same filtration.
    We will write 
    \[R= \bigoplus_m R_m\coloneqq \bigoplus_m\left(\bigoplus_{\la \xi, \alpha\ra=m} R_\alpha\right).\]
    Write $\cV_0=\sum b_i F_i$. Put $\cL = \pi^*L_{\AA^1}+D$, where $p: \cV\to V\times \AA^1$. We first assume $\ord_{F_i}(D)>0$ for any $i$.
    By~\cite[Lemma 5.17]{BHJ17}, we have
    \begin{align*}
        \cF^\lambda R &= \bigcap_i\bigoplus_m\{f_m\in R_m: b_i^{-1}(r_V(\ord_{F_i})+m\ord_{F_i}(D))(f_m)\geq \lambda)\}\\
        &= \bigcap_i \{f\in R: (t_i\xi)*r_V(\ord_{F_i})(f)\geq \lambda b_i\}
    \end{align*}
    where $r_V(\ord_{F_i})$ denotes the restriction of $\ord_{F_i}$ to $V$, and $t_i = \ord_{F_i}(D)>0$. On the other hand, by~\cite[Lemma 2.21]{LWX}, we see that 
    \[\cF^\lambda R = \bigcap_i\{f: a_i^{-1}\ord_{E_i}(f)\geq \lambda\}.\]
    Note that by construction, we have
    \begin{align*}
        &\cV_0 = \Proj \bigoplus_m\bigoplus_k \cF^kR_m/\cF^{>k}R_m,\\
        &E = \Proj \bigoplus_k\bigoplus_m \cF^kR_m/\cF^{>k}R_m,\\
        &\cX_0 = \Spec \bigoplus_k\bigoplus_m \cF^kR_m/\cF^{>k}R_m.
    \end{align*}
    Since each irreducible component of $\cX_0$ is $\TT\times \GG_m$ invariant, $\cV_0$ and $E$ have the same number of irreducible components. 
    By~\cite[Lemma 1.7]{BHJ17}, possibly after rearranging, we have for any $i$,
    \[a_i^{-1}\ord_{E_i} = b_i^{-1}(t_i\xi)*r_V(\ord_{F_j}).\]
    On the other hand, denote by $\varphi\in\cH^{\na}(\xi)$ the function induced by this filtration. Then by~\cite[Proposition 7.29]{BHJ17} one has
    \begin{align*}
        L^{\na}(\varphi) &= \min_i\{A_{(V, B_V)}(b_i^{-1}r_V(\ord_{F_i}))+A(\xi)\varphi(b_i^{-1}r_V(\ord_{F_i})\}\\
        &=\min_i\{A_{(V, B_V)}(b_i^{-1}r_V(\ord_{F_i}))+ A(\xi)b_i^{-1}\ord_{F_i}(D)\}\\
        &=\min_i\{b_i^{-1}A_{(X, B)}((t_i\xi)*r_V(\ord_{F_i})\}\\
        &=\min_i \{a_i^{-1} A_{(X, B)}(\ord_{E_i})\}.
    \end{align*}
    We now deal with the case when some $\ord_{F_i}(D)\leq 0$. In this case, the twisted filtration $\{\cF^{\bullet-pm}R_m\}$ for $p$ large enough satisfies the previous assumption. In terms of test configurations, this is nothing but twisting $\cL$ by $p\cV_0$, and so $L^{\na}(\varphi)$ is translated by $pA(\xi)$. On the other hand, $Y$ is replaced by a weighted blowup $Y'$ with respect to the filtration and $v_\xi=\ord_V$ with weights $(1, p)$. Pick $p$ large and divisible enough such that $a_i|p$ for any $i$. Then the exceptional divisor $E'\subset Y'$ is given by 
    \[E'=\Proj \bigoplus_d\left(\bigoplus_{pm+k=d}\cF^kR_m/\cF^{>k}R_m\right)\]
    with $E' = \sum_i a_i E_i'$ and $\ord_{E_i'} = (\frac{p}{a_i}\xi)*\ord_{E_i}$. We are now in the previous case. Let $\varphi_p$ be the function corresponding to the twisted filtration. Then
    \[L^{\na}(\varphi) = L^{\na}(\varphi_p)-pA(\xi) = \min_i\{a_i^{-1}A_{(X, B)}(\ord_{E_i'})\} -pA(\xi) = \min_i\{a_i^{-1}A_{(X, B)}(\ord_{E_i})\}.\]
    Note that the right hand side does not depend on $\xi$, and that $L^{\na}$ is homogeneous in $\xi$ of degree $0$, we have shown the equality for any $\xi$ rational. In general, we conclude by Proposition~\ref{chap5-prop-L-cts}.

    To see the second assertion, note that we already have \[A_{(X, B)}(a_i^{-1}\ord_{E_i})\geq A_{(X, B)}(a_i^{-1}\ord_{E_i})+A(\xi)\varphi(a_i^{-1}\ord_{E_i})\geq L^{\na}(\varphi)\] 
    since $\varphi(a_i^{-1}\ord_{E_i})\leq 0$. Thus, after taking minimum, the inequalities above become equalities.
    Let $v_i\in X_0$ be valuations such that $a_i^{-1}\ord_{E_i} = s_i\xi*v_i$ for some $s_i>0$. Then 
    \[L^{\na}(\varphi) = \min_i \{A_{(X, B)}(a_i^{-1}\ord_{E_i})+A(\xi)\varphi(a_i^{-1}\ord_{E_i})\} =  \min_i \{A_{(X, B)}(v_i)+A(\xi)\varphi(v_i)\}.\]
\end{proof}

\begin{cor}
    If $\varphi$ comes from a normal test configuration $(\cX,\cB; \xi, \eta)$ with $\eta$ in the Reeb field, then $L^{\na}(\varphi)$ is constant on the Reeb cone.
\end{cor}
\begin{proof}
    The above theorem in particular shows that for any $\xi$ rational, if we denote by $(V, B_V)$ the log Fano quotient by $\la \xi\ra$, and $(\cV_m, \cB_m)$ the normalized blowup of $V\times \AA^1$ along the corresponding flag ideal $\cI_m^{(\lambda_{\rm max})}$ induced by the same filtration on $V$, then $L^{\na}(\varphi)=\lim_m L^{\na}(\cV_m, \cB_m; \eta)$, as shown in e.g.~\cite[Lemma 4.3]{FujitaVolume},~\cite[Lemma 4.7]{FujitaVal}. Since this is true for any rational $\xi$, it follows that $L^{\na}(\varphi)$ is constant on the Reeb cone.
\end{proof}

\begin{cor}
    For $\varphi = \fs(\xi, \chi_\eta)$ coming from a fixed $\QQ$-Gorenstein test configuration $(\cX, \cB; \xi, \eta)$, $L^{\na}(\varphi)$ is constant on the Reeb cone. 
\end{cor}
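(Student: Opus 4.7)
The plan is to reduce to the case $A(\eta) = 0$ via a shift of the $\GG_m$-action by a multiple of $\xi$, invoke Corollary~\ref{Dingstab}, and then extend to all Reeb vectors by continuity. I would first record a shift identity: for any $c \in \RR$, replacing $\eta$ by $\eta + c\xi$ twists the norm by $\chi_{\eta + c\xi}(f_\alpha) = \chi_\eta(f_\alpha) + c\langle \xi, \alpha\rangle$, hence
\[
\fs(\xi, \chi_{\eta + c\xi}) = \fs(\xi, \chi_\eta) + c \quad\text{and}\quad L^{\na}(\fs(\xi, \chi_{\eta + c\xi})) = L^{\na}(\fs(\xi, \chi_\eta)) + cA(\xi).
\]
Since $A(\eta + c\xi) = A(\eta) + cA(\xi)$, this shows that the quantity $L^{\na}(\fs(\xi, \chi_\eta)) - A(\eta)$ is invariant under shifts $\eta \mapsto \eta + c\xi$.

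Next, for a rational $\xi \in \ft_\QQ^+$, I would take $c := -A(\eta)/A(\xi) \in \QQ$ so that $A(\eta + c\xi) = 0$. After clearing denominators via an integer rescaling of $\xi$ (which is allowed, since $L^{\na}$ is degree-zero in $\xi$ as a consequence of $A(\lambda\xi) = \lambda A(\xi)$ and $\fs(\lambda\xi, \chi_\eta) = \lambda^{-1}\fs(\xi, \chi_\eta)$), one realizes $\eta + c\xi$ as a genuine $\GG_m$-action on $\cX$, producing a $\QQ$-Gorenstein TC with $A = 0$. The rational case of Corollary~\ref{Dingstab} (via Proposition A.14 of~\cite{LWX}) then yields
\[
L^{\na}(\fs(\xi, \chi_{\eta + c\xi})) = \lct(\cX, \cB; \cX_0) - 1,
\]
a quantity depending only on $(\cX, \cB, \cX_0)$ and not on $\xi$. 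Invoking the shift invariance,
\[
L^{\na}(\fs(\xi, \chi_\eta)) = \lct(\cX, \cB; \cX_0) - 1 + A(\eta)
\]
for every rational Reeb vector $\xi$.

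Finally, Proposition~\ref{chap5-prop-L-cts} ensures that $\xi \mapsto L^{\na}(\fs(\xi, \chi_\eta))$ is continuous on $\ft_\RR^+$, and since $\ft_\QQ^+$ is dense in $\ft_\RR^+$, this constancy extends to the entire Reeb cone. The main obstacle is the scaling step: the ratio $c = -A(\eta)/A(\xi)$ is generally rational but not an integer, so one must carefully rescale $\xi$ by a positive integer to turn $\eta + c\xi$ into an honest $\GG_m$-cocharacter to which Proposition A.14 of~\cite{LWX} literally applies. Should this scaling turn out to be delicate in some cases, an alternative is to bypass the reduction to $A(\eta) = 0$ altogether and instead shift $\eta$ by a large integer multiple of $\xi$ to place $\eta + c\xi$ in the Reeb cone of $\TT \times \GG_m$ on $\cX_0$, then apply the preceding theorem and track the transformation $\ord_{E_i} \mapsto c a_i \xi \ast \ord_{E_i}$ of the extremal slope valuations of the filtration via Lemma~\ref{logdiscreptranslation}, which yields $L^{\na}(\fs(\xi, \chi_\eta)) = \min_i a_i^{-1} A_{(X, B)}(\ord_{E_i})$ directly and manifestly independent of $\xi$.
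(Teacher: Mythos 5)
Your proposal is correct and follows essentially the same route as the paper: both reduce to the $A(\eta)=0$ case via the twist identity $L^{\na}(\fs(\xi,\chi_{\eta+c\xi}))=L^{\na}(\fs(\xi,\chi_\eta))+cA(\xi)$, invoke \cite[Proposition A.14]{LWX} (the rational case of Corollary~\ref{Dingstab}) to obtain the $\xi$-independent value $\lct(\cX,\cB;\cX_0)-1$, and conclude by the continuity of Proposition~\ref{chap5-prop-L-cts}, while your fallback via the explicit formula $\min_i a_i^{-1}A_{(X,B)}(\ord_{E_i})$ is precisely the other ingredient the paper uses in its case division. One small slip: rescaling $\xi$ by a positive integer does not make $\eta+c\xi$ integral, since $c\xi=-A(\eta)A(\xi)^{-1}\xi$ is unchanged under $\xi\mapsto m\xi$; one should instead clear denominators in $\eta+c\xi$ itself, exactly as the paper implicitly does when twisting by rational multiples of $\xi'$.
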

\begin{proof}
   By the previous theorem, and~\cite[Section 2.5, Equation (109) and (116)]{Li21}, it suffices to deal with the case when $A(\eta)<0$. Fix $\xi'$ rational.
   First note that if $A(\eta)=0$ and $\lambda =\frac{A(\lambda)}{A(\xi')}\xi'+\lambda' \in \ft_\QQ^+$, then 
   \[L^{\na}(\fs(\xi, \eta+\lambda)) = L^{\na}(\fs(\xi, \eta+\lambda'))+A(\lambda) = L^{\na}(\fs(\xi', \eta))+A(\lambda) = L^{\na}(\fs(\xi, \eta))+A(\lambda),\]
   where the second to last equality follows from~\cite[Proposition A.14]{LWX} and the fact that twisting by $\lambda$ does not change the test configuration.
   Now assume $A(\eta)<0$ and pick $\lambda\in \ft_\QQ^+$ such that $A(\eta+\lambda)>0$. Put
   \[\lambda = \frac{A(\eta+\lambda)}{A(\xi')}\xi'+\lambda'\eqqcolon c\xi'+\lambda'.\]
   Note that $A(\eta+c\xi')=0$ and $A(\lambda') = -A(\eta)>0$.
   Then by the previous observation, we have
   \begin{align*}
   L^{\na}(\fs(\xi, \eta+\lambda)) &= L^{\na}(\fs(\xi', \eta+c\xi'))+A(\lambda') = L^{\na}(\fs(\xi', \eta+c\xi'))-A(\eta)\\
   &= L^{\na}(\fs(\xi', \eta))+A(\eta+\lambda)-A(\eta)\\
   &= L^{\na}(\fs(\xi', \eta))+A(\lambda).
   \end{align*}
   Again this calculation works for any fixed rational $\xi'$ and by the previous theorem, the left hand side doesn't depend on $\xi$. Hence $L^{\na}(\fs(\xi', \eta))$ doesn't depend on $\xi$ either.
\end{proof}
\section{Non-Archimedean characterization of K-semistability}\label{section: pfmain}
We are now in the place of proving Theorem~\ref{main-thm-stability-equivalence}. Since the energy functionals in this paper are slightly different from the ones in~\cite{wu, NAPP}, we will temporarily denote by $\tilde{E}$ the Monge--Amp\`ere energy studied in~\cite{wu}. Then 
\[E(\varphi) = A(\xi)\tilde{E}(\varphi)\]
for all $\varphi\in \cH^{\na}(\xi)$.
\begin{thm}[= Theorem \ref{main-thm-stability-equivalence}]
    Let $(X, B; \xi)$ be a log Fano cone singularity. Then the following are equivalent:
    \begin{enumerate}
        \item $(X, B; \xi)$ is K-semistable;
        \item $M^{\na}\geq 0$ on $\cH^{\na}(\xi)$;
        \item $D^{\na}\geq 0$ on $\cH^{\na}(\xi)$;
        \item $(X, B; \xi)$ is K-semistable with respect to special test configurations.
        \item $\delta(X, B;\xi)\geq 1$
    \end{enumerate}
\end{thm}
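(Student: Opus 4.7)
The plan is to close the cycle $(1) \Rightarrow (4) \Leftrightarrow (5) \Rightarrow (3) \Rightarrow (2) \Rightarrow (1)$. Two implications are essentially free: $(1) \Rightarrow (4)$ since special test configurations form a subclass of normal ones, and $(4) \Leftrightarrow (5)$ is the valuative criterion \cite[Theorem 1.0.3]{HuangThesis} already recalled in Section~\ref{section: localval}. Two more follow immediately from Corollary~\ref{chap5-cor-mabuchi-ding-comp}: the chain $D^{\na}\leq M^{\na}\leq \fut$ yields $(3)\Rightarrow(2)$ (since $M^{\na}\geq D^{\na}\geq 0$), and likewise $(2)\Rightarrow(1)$ (since any normal test configuration $(\cX,\cB;\xi,\eta)$ gives rise to an FS potential $\varphi=\fs(\xi,\chi_\eta)\in \cH^{\na}(\xi)$ with $\fut(\cX,\cB;\xi,\eta)\geq M^{\na}(\varphi)\geq 0$). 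The real content lies in $(5)\Rightarrow(3)$.

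To attack $(5)\Rightarrow(3)$, I would unravel $D^{\na}(\varphi) = L^{\na}(\varphi) - E^{\na}(\varphi)$. The hypothesis $\delta(X,B;\xi)\geq 1$ reads $A_{(X,B)}(v) \geq \tfrac{(n+1)A(\xi)}{n}S(v;\xi)$ for every nontrivial $\TT$-invariant quasi-monomial $v\in X_0$ with $A(v)<\infty$. Inserting this into $L^{\na}(\varphi) = \inf_v\bigl(A_{(X,B)}(v)+A(\xi)\varphi(v)\bigr)$, the desired bound $D^{\na}(\varphi)\geq 0$ reduces to a non-Archimedean Fujita--Li-type inequality
\[
E^{\na}(\varphi) \;\leq\; \tfrac{(n+1)A(\xi)}{n}\,S(v;\xi) + A(\xi)\,\varphi(v)
\]
for every FS function $\varphi \in \cH^{\na}(\xi)$ and every $\TT$-invariant quasi-monomial $v\in X_0$, since taking the infimum over $v$ then yields $L^{\na}(\varphi)\geq E^{\na}(\varphi)$.

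This Fujita--Li estimate carries the main weight of the argument. For quasi-regular $\xi\in \ft_\QQ^+$ it reduces to the classical inequality on the log Fano quotient $(V,B_V;L)$ from Proposition~\ref{prelim-prop-quotient-log-fano}: Lemma~\ref{LogDiscrepancy} identifies $A_{(X,B)}$ with $A_{(V,B_V)}$ on $\TT$-invariant valuations, \cite[Lemma 3.11]{wu} identifies $A(\xi)\tfrac{n+1}{n}S(v;\xi)$ with the global Fujita invariant $S(v;L)$, and Propositions~\ref{chap5-prop-local-global-mabuchi} and~\ref{chap5-prop-local-global-ding} identify the local $E^{\na}$ and $L^{\na}$ with their global counterparts. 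The inequality on the log Fano side is standard; see e.g.\ \cite{BlumJonsson, Xubook}. For irrational $\xi$, I would fix a quasi-monomial $v$ on a $\TT$-equivariant log-smooth model over $X$ and vary $\xi$: by Theorem~\ref{chap5-thm-mabuchi-cts} the functional $E^{\na}(\fs(\xi,\chi))$ is continuous in $\xi$, and the same is true of $A(\xi)$, $S(v;\xi)$ (via the volume function of the filtration $\cF_v$), and $\varphi(v)=\fs(\xi,\chi)(v)$ by the very definition of the FS function, so the inequality passes from $\ft_\QQ^+$ to $\ft_\RR^+$ by density.

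The main obstacle I expect is precisely this continuous passage. While each of $E^{\na}$, $L^{\na}$, $A(\xi)$, $S(v;\xi)$ is individually continuous in $\xi$, both $L^{\na}$ and $\delta$ involve an infimum over an infinite-dimensional set of valuations, and a minimizer at a rational $\xi_k$ need not converge as $\xi_k\to \xi$. The workaround is to prove the pointwise Fujita--Li inequality for each fixed $v$ at every rational Reeb field and pass to the limit in $\xi$ \emph{before} taking the infimum in $v$, minimizing only at the end. A subsidiary point is that the infima defining $L^{\na}$ and $\delta$ range over $\TT$-invariant valuations only, which is compatible with the quotient reduction because every $\TT$-invariant quasi-monomial $v\in X_0$ corresponds to a quasi-monomial valuation on $V$ via \cite[Lemma 5.11]{wu}.
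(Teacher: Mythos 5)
Your proposal is correct, and its skeleton---reducing everything to the single implication (5) $\Rightarrow$ (3), with (1) $\Rightarrow$ (4) trivial, (4) $\Leftrightarrow$ (5) from \cite[Theorem 1.0.3]{HuangThesis}, and (3) $\Rightarrow$ (2) $\Rightarrow$ (1) from the chain $D^{\na}\leq M^{\na}\leq \fut$ of Corollary~\ref{chap5-cor-mabuchi-ding-comp}---is exactly the paper's. Where you genuinely diverge is in the proof of (5) $\Rightarrow$ (3). The paper stays entirely in the local non-Archimedean setting: it uses concavity of the Monge--Amp\`ere energy applied to the competitor $\delta^{-1}\varphi+(1-\delta^{-1})\varphi_\xi$, then the Legendre duality formula $\tilde E^{\na}(\psi)=\inf_{\mu\in\cM^1}\bigl(E^\vee(\mu;\xi)+\int\psi\,\mu\bigr)$ and the bound $E^\vee(v;\xi)\leq\tfrac{n+1}{n}S(v;\xi)$ from \cite[Propositions 7.3 and 7.4]{NAPP}, tested on Dirac masses at quasi-monomial valuations; this yields $E^{\na}\leq L^{\na}$ in one stroke for arbitrary, possibly irrational, $\xi$. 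You instead isolate the pointwise Fujita--Li inequality $E^{\na}(\varphi)\leq\tfrac{(n+1)A(\xi)}{n}S(v;\xi)+A(\xi)\varphi(v)$, prove it for rational $\xi$ by descending to the quotient log Fano pair, and extend to irrational $\xi$ by continuity in $\xi$ for fixed $v$ and fixed finite-type norm $\chi$, taking the infimum over $v$ only at the very end; your diagnosis that the infimum must be taken \emph{after} the limit is exactly the right precaution. Note that testing the paper's duality formula on the single Dirac mass $\delta_v$ recovers precisely your pointwise inequality, so the two arguments meet; the paper's version with the $\delta^{-1}$-weighted convex combination is what one would want for a uniform statement, while for bare semistability your direct chain suffices. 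The one ingredient you should flesh out is the continuity of $\xi\mapsto S(v;\xi)$ for fixed $v$: the paper asserts this elsewhere (proof of Proposition~4.3) without proof, and your route leans on it more heavily, so you would want to justify it (e.g. via homogeneity together with the limit description of $S$ as a volume of the filtration $\cF_v$), or else quote the global inequality on the quotient in a form already phrased for the polarization $L$ and track the normalizations through Lemma~\ref{normalization} and Proposition~\ref{chap5-prop-local-global-ding} as you indicate.
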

\begin{proof}
     By Corollary~\ref{chap5-cor-mabuchi-ding-comp}, we have~(3) $\Rightarrow$~(2) $\Rightarrow$~(1). By~\cite[Theorem 1.0.3]{HuangThesis} we have~(4) $\Leftrightarrow$~(5). We are going to show~(5) $\Leftarrow$~(3). This will complete the proof of the theorem since~(1) $\Rightarrow$~(4) trivially.

     To this end, we follow the proof of~\cite[Theorem 7.3]{BBJ}. To simplify the notation, write $\delta \coloneqq\delta(X, B;\xi)\geq 1$. In our notation (see Section~\ref{section: localval}) this reads
     \[A(v)\geq \delta \frac{n+1}{n}A(\xi)S(v;\xi),\]
     for all quasi-monomial $v\in X_0^\TT$. Note also that for any $\varphi\in \cH^{\na}(\xi)$, the convex combination $\delta^{-1}\varphi+(1-\delta^{-1})\varphi_\xi\in \cpsh( \xi)$ by~\cite[Proposition 3.3]{NAPP}. Thus by concavity of the Monge--Amp\`ere energy, we have
     \[
     \delta^{-1} \tilde{E}^{\na}(\varphi)=\delta^{-1} \tilde{E}^{\na}(\varphi)+(1-\delta^{-1})\tilde{E}^{\na}(\varphi_\xi)\leq \tilde{E}^{\na}(\delta^{-1}\varphi+(1-\delta^{-1})\varphi_\xi).
     \]
     Fix $\varphi\in\cH^{\na}(\xi)$ with $\sup_{X_0}\varphi\leq 0$. So~\cite[Proposition 7.3 and Proposition 7.4]{NAPP} yield:
     
      \begin{align*}
         \delta^{-1}E^{\na}(\varphi)&\leq E^{\na}(\delta^{-1}\varphi+(1-\delta^{-1})\varphi_\xi) = A(\xi)\tilde{E}^{\na}(\delta^{-1}\varphi+(1-\delta^{-1})\varphi_\xi) \\
         &= A(\xi)\inf_{\mu\in \cM^1}\left(E^\vee(\mu;\xi)+\int(\delta^{-1}\varphi+(1-\delta^{-1})\varphi_\xi)\mu\right)\\
         &\leq A(\xi)\inf_{v\in X_0^{\TT, \mathrm{\qm}}}\left(E^\vee(v;\xi)+\delta^{-1}\varphi(v)+(1-\delta^{-1})\varphi_\xi(v)\right)\\
         &\leq A(\xi)\inf_{v\in X_0^{\TT, \mathrm{\qm}}}\left(\frac{n+1}{n}S(v;\xi)+\delta^{-1}\varphi(v)+(1-\delta^{-1})\varphi_\xi(v)\right)\\
         &\leq  \inf_{v\in X_0^{\TT, \mathrm{\qm}}}\left(\delta^{-1}A(v)+ A(\xi)\delta^{-1}\varphi(v)+A(\xi)(1-\delta^{-1})\varphi_\xi(v)\right)\\
         &= \delta^{-1}\inf_{v\in X_0^{\TT, \mathrm{\qm}}}\left(A(v)+A(\xi)\varphi(v)\right)=\delta^{-1}L^{\na}(\varphi),
     \end{align*}
     where we have used $\varphi_\xi= 0$ on $X_0$ in the last equality.
     Hence 
     \[D^{\na}(\varphi) = L^{\na}(\varphi)-E^{\na}(\varphi)\geq 0.\]
\end{proof}

\bibliographystyle{alpha}
\bibliography{bib}

\newcommand{\etalchar}[1]{$^{#1}$}
\begin{thebibliography}{ABHLX20}

\bibitem[ABB{\etalchar{+}}23]{ABB+}
Kenneth Ascher, Dori Bejleri, Harold Blum, Kristin DeVleming, Giovanni
  Inchiostro, Yuchen Liu, and Xiaowei Wang.
\newblock {Moduli of boundary polarized Calabi--Yau pairs}.
\newblock {\em \url{arXiv:2307.06522}}, 2023.

\bibitem[ABHLX20]{ABHLX}
Jarod Alper, Harold Blum, Daniel Halpern-Leistner, and Chenyang Xu.
\newblock Reductivity of the automorphism group of {$K$}-polystable {F}ano
  varieties.
\newblock {\em Invent. Math.}, 222(3):995--1032, 2020.

\bibitem[BBJ21]{BBJ}
Robert~J. Berman, S\'{e}bastien Boucksom, and Mattias Jonsson.
\newblock A variational approach to the {Y}au-{T}ian-{D}onaldson conjecture.
\newblock {\em J. Amer. Math. Soc.}, 34(3):605--652, 2021.

\bibitem[BdFFU15]{bdffu}
S{\'e}bastien Boucksom, Tommaso de~Fernex, Charles Favre, and Stefano Urbinati.
\newblock Valuation spaces and multiplier ideals on singular varieties.
\newblock In {\em Recent advances in algebraic geometry}, volume 417 of {\em
  London Math. Soc. Lecture Note Ser.}, pages 29--51. Cambridge Univ. Press,
  Cambridge, 2015.

\bibitem[Ber12]{Berk12}
Vladimir~G. Berkovich.
\newblock {\em Spectral theory and analytic geometry over non-{A}rchimedean
  fields}, volume~33 of {\em Mathematical Surveys and Monographs}.
\newblock American Mathematical Society, Providence, RI, 2012.

\bibitem[Ber23]{BermanToric}
Robert~J. Berman.
\newblock Conical {C}alabi-{Y}au metrics on toric affine varieties and convex
  cones.
\newblock {\em J. Differential Geom.}, 125(2):345--377, 2023.

\bibitem[BG08]{sasaki}
Charles~P. Boyer and Krzysztof Galicki.
\newblock {\em Sasakian geometry}.
\newblock Oxford Mathematical Monographs. Oxford University Press, Oxford,
  2008.

\bibitem[BHJ17]{BHJ17}
S\'{e}bastien Boucksom, Tomoyuki Hisamoto, and Mattias Jonsson.
\newblock Uniform {K}-stability, {D}uistermaat-{H}eckman measures and
  singularities of pairs.
\newblock {\em Ann. Inst. Fourier (Grenoble)}, 67(2):743--841, 2017.

\bibitem[BHLLX21]{BHLLX}
Harold Blum, Daniel Halpern-Leistner, Yuchen Liu, and Chenyang Xu.
\newblock On properness of {K}-moduli spaces and optimal degenerations of
  {F}ano varieties.
\newblock {\em Selecta Math. (N.S.)}, 27(4):Paper No. 73, 39, 2021.

\bibitem[Bir19]{Birkar}
Caucher Birkar.
\newblock Anti-pluricanonical systems on {F}ano varieties.
\newblock {\em Ann. of Math. (2)}, 190(2):345--463, 2019.

\bibitem[BJ20]{BlumJonsson}
Harold Blum and Mattias Jonsson.
\newblock Thresholds, valuations, and {K}-stability.
\newblock {\em Adv. Math.}, 365:107062, 57, 2020.

\bibitem[BJ21]{BJNA1}
S{\'e}bastien Boucksom and Mattias Jonsson.
\newblock A non-{A}rchimedean approach to {K}-stability, {I}: {M}etric geometry
  of spaces of test configurations and valuations.
\newblock {\em \url{arXiv:2107.11221}. To appear in Ann. Inst. Fourier.}, 2021.

\bibitem[BJ22]{BJ18}
S\'{e}bastien Boucksom and Mattias Jonsson.
\newblock Global pluripotential theory over a trivially valued field.
\newblock {\em Ann. Fac. Sci. Toulouse Math. (6)}, 31(3):647--836, 2022.

\bibitem[BLX22]{BLX}
Harold Blum, Yuchen Liu, and Chenyang Xu.
\newblock Openness of {K}-semistability for {F}ano varieties.
\newblock {\em Duke Math. J.}, 171(13):2753--2797, 2022.

\bibitem[BLXZ23]{BLXZ}
Harold Blum, Yuchen Liu, Chenyang Xu, and Ziquan Zhuang.
\newblock The existence of the {K}\"{a}hler-{R}icci soliton degeneration.
\newblock {\em Forum Math. Pi}, 11:Paper No. e9, 28, 2023.

\bibitem[BX19]{BX}
Harold Blum and Chenyang Xu.
\newblock Uniqueness of {K}-polystable degenerations of {F}ano varieties.
\newblock {\em Ann. of Math. (2)}, 190(2):609--656, 2019.

\bibitem[Che24]{Che24}
Zhiyuan Chen.
\newblock Stable degeneration of families of klt singularities with constant
  local volume.
\newblock {\em \url{arXiv:2405.19273}}, 2024.

\bibitem[CLD12]{CLD12}
Antoine Chambert-Loir and Antoine Ducros.
\newblock Formes diff{\'e}rentielles r{\'e}elles et courants sur les espaces de
  {B}erkovich.
\newblock {\em \url{arXiv:1204.6277}}, 2012.

\bibitem[CP21]{CP21}
Giulio Codogni and Zsolt Patakfalvi.
\newblock Positivity of the {CM} line bundle for families of {$K$}-stable klt
  {F}ano varieties.
\newblock {\em Invent. Math.}, 223(3):811--894, 2021.

\bibitem[CS18]{CSIrregular}
Tristan~C. Collins and G\'{a}bor Sz\'{e}kelyhidi.
\newblock K-semistability for irregular {S}asakian manifolds.
\newblock {\em J. Differential Geom.}, 109(1):81--109, 2018.

\bibitem[CS19]{CS19}
Tristan~C. Collins and G\'{a}bor Sz\'{e}kelyhidi.
\newblock Sasaki-{E}instein metrics and {K}-stability.
\newblock {\em Geom. Topol.}, 23(3):1339--1413, 2019.

\bibitem[CZ22]{CZ21}
Guodu Chen and Chuyu Zhou.
\newblock Weakly special test configurations of log canonical {F}ano varieties.
\newblock {\em Algebra Number Theory}, 16(10):2415--2432, 2022.

\bibitem[Dem97]{Demailly}
Jean-Pierre Demailly.
\newblock {\em Complex analytic and differential geometry}.
\newblock Citeseer, 1997.

\bibitem[Fan14]{Fantini}
Lorenzo Fantini.
\newblock Normalized non-{A}rchimedean links and surface singularities.
\newblock {\em C. R. Math. Acad. Sci. Paris}, 352(9):719--723, 2014.

\bibitem[Fuj18]{FujitaVolume}
Kento Fujita.
\newblock Optimal bounds for the volumes of {K}\"ahler-{E}instein {F}ano
  manifolds.
\newblock {\em Amer. J. Math.}, 140(2):391--414, 2018.

\bibitem[Fuj19]{FujitaVal}
Kento Fujita.
\newblock A valuative criterion for uniform {K}-stability of
  {$\mathbb{Q}$}-{F}ano varieties.
\newblock {\em J. Reine Angew. Math.}, 751:309--338, 2019.

\bibitem[HL23]{Hanli}
Jiyuan Han and Chi Li.
\newblock On the {Y}au-{T}ian-{D}onaldson conjecture for generalized
  {K}\"{a}hler-{R}icci soliton equations.
\newblock {\em Comm. Pure Appl. Math.}, 76(9):1793--1867, 2023.

\bibitem[HLS19]{HLS19}
Jingjun Han, Jihao Liu, and V.~V. Shokurov.
\newblock Acc for minimal log discrepancies of exceptional singularities.
\newblock {\em \url{arXiv:1903.04338}}, 2019.

\bibitem[Hua22]{HuangThesis}
Kai Huang.
\newblock K-stability of log {F}ano cone singularities.
\newblock {\em Thesis (Ph.D.)–Massachusetts Institute of Technology}, 2022.

\bibitem[Jia20]{Jia20}
Chen Jiang.
\newblock Boundedness of {$\Bbb Q$}-{F}ano varieties with degrees and
  alpha-invariants bounded from below.
\newblock {\em Ann. Sci. \'Ec. Norm. Sup\'er. (4)}, 53(5):1235--1248, 2020.

\bibitem[JM12]{JM12}
Mattias Jonsson and Mircea Musta\c{t}\u{a}.
\newblock Valuations and asymptotic invariants for sequences of ideals.
\newblock {\em Ann. Inst. Fourier (Grenoble)}, 62(6):2145--2209 (2013), 2012.

\bibitem[KM98]{KM98}
J\'{a}nos Koll\'{a}r and Shigefumi Mori.
\newblock {\em Birational geometry of algebraic varieties}, volume 134 of {\em
  Cambridge Tracts in Mathematics}.
\newblock Cambridge University Press, Cambridge, 1998.
\newblock With the collaboration of C. H. Clemens and A. Corti, Translated from
  the 1998 Japanese original.

\bibitem[Kol04]{kollar}
J{\'a}nos Koll{\'a}r.
\newblock Seifert {G}m-bundles.
\newblock {\em \url{arXiv:math/0404386}}, 2004.

\bibitem[Kol13]{kollarMMP}
J\'{a}nos Koll\'{a}r.
\newblock {\em Singularities of the minimal model program}, volume 200 of {\em
  Cambridge Tracts in Mathematics}.
\newblock Cambridge University Press, Cambridge, 2013.
\newblock With a collaboration of S\'{a}ndor Kov\'{a}cs.

\bibitem[Lag12]{lagerberg}
Aron Lagerberg.
\newblock Super currents and tropical geometry.
\newblock {\em Math. Z.}, 270(3-4):1011--1050, 2012.

\bibitem[Lah19]{Lahdili}
Abdellah Lahdili.
\newblock K\"{a}hler metrics with constant weighted scalar curvature and
  weighted {K}-stability.
\newblock {\em Proc. Lond. Math. Soc. (3)}, 119(4):1065--1114, 2019.

\bibitem[Li17]{Livolmin}
Chi Li.
\newblock K-semistability is equivariant volume minimization.
\newblock {\em Duke Math. J.}, 166(16):3147--3218, 2017.

\bibitem[Li18]{LiIzumi}
Chi Li.
\newblock Minimizing normalized volumes of valuations.
\newblock {\em Math. Z.}, 289(1-2):491--513, 2018.

\bibitem[Li21]{Li21}
Chi Li.
\newblock Notes on weighted {K}{\"a}hler-{R}icci solitons and application to
  {R}icci-flat {K}{\"a}hler cone metrics.
\newblock {\em \url{arXiv:2107.02088}}, 2021.

\bibitem[Li22]{Li19}
Chi Li.
\newblock {$G$}-uniform stability and {K}\"{a}hler-{E}instein metrics on {F}ano
  varieties.
\newblock {\em Invent. Math.}, 227(2):661--744, 2022.

\bibitem[LTW22]{LTW}
Chi Li, Gang Tian, and Feng Wang.
\newblock The uniform version of {Y}au-{T}ian-{D}onaldson conjecture for
  singular {F}ano varieties.
\newblock {\em Peking Math. J.}, 5(2):383--426, 2022.

\bibitem[LWX21]{LWX}
Chi Li, Xiaowei Wang, and Chenyang Xu.
\newblock Algebraicity of the metric tangent cones and equivariant
  {K}-stability.
\newblock {\em J. Amer. Math. Soc.}, 34(4):1175--1214, 2021.

\bibitem[LX14]{LX14}
Chi Li and Chenyang Xu.
\newblock Special test configuration and {K}-stability of {F}ano varieties.
\newblock {\em Ann. of Math. (2)}, 180(1):197--232, 2014.

\bibitem[LX18]{LX18}
Chi Li and Chenyang Xu.
\newblock Stability of valuations: higher rational rank.
\newblock {\em Peking Math. J.}, 1(1):1--79, 2018.

\bibitem[LXZ22]{LXZ}
Yuchen Liu, Chenyang Xu, and Ziquan Zhuang.
\newblock Finite generation for valuations computing stability thresholds and
  applications to {K}-stability.
\newblock {\em Ann. of Math. (2)}, 196(2):507--566, 2022.

\bibitem[LZ22]{LZFiniteEquiv}
Yuchen Liu and Ziwen Zhu.
\newblock Equivariant {$K$}-stability under finite group action.
\newblock {\em Internat. J. Math.}, 33(1):Paper No. 2250007, 21, 2022.

\bibitem[Mor18]{Moraga}
Joaqu{\'\i}n Moraga.
\newblock A boundedness theorem for cone singularities.
\newblock {\em \url{arXiv:1812.04670}. To appear in Manuscripta Math.}, 2018.

\bibitem[MSY08]{MSY}
Dario Martelli, James Sparks, and Shing-Tung Yau.
\newblock Sasaki-{E}instein manifolds and volume minimisation.
\newblock {\em Comm. Math. Phys.}, 280(3):611--673, 2008.

\bibitem[Oda13]{Odaka}
Yuji Odaka.
\newblock A generalization of the {R}oss-{T}homas slope theory.
\newblock {\em Osaka J. Math.}, 50(1):171--185, 2013.

\bibitem[Oda24]{OdakaModuli}
Yuji Odaka.
\newblock Compact moduli of {C}alabi-{Y}au cones and {S}asaki--{E}instein
  spaces.
\newblock {\em \url{arXiv:2405.07939}}, 2024.

\bibitem[PS11]{PS}
Lars Petersen and Hendrik S\"{u}ss.
\newblock Torus invariant divisors.
\newblock {\em Israel J. Math.}, 182:481--504, 2011.

\bibitem[RT11]{RossThomas}
Julius Ross and Richard Thomas.
\newblock Weighted projective embeddings, stability of orbifolds, and constant
  scalar curvature {K}\"{a}hler metrics.
\newblock {\em J. Differential Geom.}, 88(1):109--159, 2011.

\bibitem[Wan12]{Wang}
Xiaowei Wang.
\newblock Height and {GIT} weight.
\newblock {\em Math. Res. Lett.}, 19(4):909--926, 2012.

\bibitem[WN12]{Nystrom}
David Witt~Nystr\"{o}m.
\newblock Test configurations and {O}kounkov bodies.
\newblock {\em Compos. Math.}, 148(6):1736--1756, 2012.

\bibitem[Wu22]{wu}
Yueqiao Wu.
\newblock Volume and {M}onge-{A}mp\`ere energy on polarized affine varieties.
\newblock {\em Math. Z.}, 301(1):781--809, 2022.

\bibitem[Wu24]{NAPP}
Yueqiao Wu.
\newblock Some non-{A}rchimedean pluripotential theory on polarized affine
  cones.
\newblock {\em \url{arXiv:2406.13684}}, 2024.

\bibitem[Xu20]{Xu}
Chenyang Xu.
\newblock A minimizing valuation is quasi-monomial.
\newblock {\em Ann. of Math. (2)}, 191(3):1003--1030, 2020.

\bibitem[Xu21]{Xusurvey}
Chenyang Xu.
\newblock K-stability of {F}ano varieties: an algebro-geometric approach.
\newblock {\em EMS Surv. Math. Sci.}, 8(1-2):265--354, 2021.

\bibitem[Xu23]{Xubook}
Chenyang Xu.
\newblock {\em K-stability of {F}ano varieties}.
\newblock \url{https://web.math.princeton.edu/~chenyang/Kstabilitybook.pdf},
  2023.

\bibitem[XZ20]{XZCM}
Chenyang Xu and Ziquan Zhuang.
\newblock On positivity of the {CM} line bundle on {K}-moduli spaces.
\newblock {\em Ann. of Math. (2)}, 192(3):1005--1068, 2020.

\bibitem[XZ21]{XuZhuang20}
Chenyang Xu and Ziquan Zhuang.
\newblock Uniqueness of the minimizer of the normalized volume function.
\newblock {\em Camb. J. Math.}, 9(1):149--176, 2021.

\bibitem[XZ22]{XZ22}
Chenyang Xu and Ziquan Zhuang.
\newblock Stable degenerations of singularities.
\newblock {\em \url{arXiv:2205.10915}}, 2022.

\bibitem[XZ24]{XZbdd}
Chenyang Xu and Ziquan Zhuang.
\newblock Boundedness of log {F}ano cone singularities and discreteness of
  local volumes.
\newblock {\em \url{arXiv:2404.17134}}, 2024.

\bibitem[Zhu21a]{ZhuEquiv}
Ziwen Zhu.
\newblock A note on equivariant {K}-stability.
\newblock {\em Eur. J. Math.}, 7(1):116--134, 2021.

\bibitem[Zhu21b]{ZhuangEquiv}
Ziquan Zhuang.
\newblock Optimal destabilizing centers and equivariant {K}-stability.
\newblock {\em Invent. Math.}, 226(1):195--223, 2021.

\bibitem[Zhu24]{ZhuangboundednessII}
Ziquan Zhuang.
\newblock On boundedness of singularities and minimal log discrepancies of
  {K}oll\'ar components, {II}.
\newblock {\em Geom. Topol.}, 28(8):3909--3934, 2024.

\end{thebibliography}

\end{document}